\newtheorem{theorem}{Theorem}
\newtheorem{lemma}[theorem]{Lemma}
\newtheorem{remark}[theorem]{Remark}
\theoremstyle{definition}
\newtheorem{definition}[theorem]{Definition}
\newtheorem{proposition}[theorem]{Proposition}
\begin{document}                                                 

\title[Heat equations in two half spaces and an interface]{Global solvability for the heat equations in two half spaces and an interface}                                
\author[Hajime Koba]{Hajime Koba}                                
\address{Graduate School of Engineering Science, the University of Osaka\\
1-3 Machikaneyamacho, Toyonaka, Osaka, 560-8531, Japan}                                  
\email{koba.hajime.es@osaka-u.ac.jp}

\keywords{Heat equations; Three phase problems; Interface; Surface mass; Surface diffusion}                         
\subjclass[]{35K05, 35D35, 80A05, 76T30}

\begin{abstract}
This paper considers the existence of a global-in-time strong solution to the heat equations in the two half spaces $\mathbb{R}^3_+(=\mathbb{R}^2 \times (0,\infty))$, $\mathbb{R}^3_-(= \mathbb{R}^2 \times (-\infty ,0))$, and the interface $\mathbb{R}^2 \times \{ 0 \} (\cong \mathbb{R}^2)$. We introduce and study some function spaces in the two half spaces and the interface. We apply our function spaces and the maximal $L^p$-regularity for Hilbert space-valued functions to show the existence of a local-in-time strong solution to our heat equations. By using an energy equality of our heat system, we prove the existence of a unique global-in-time strong solution to the system with large initial data. The key idea of constructing strong solutions to our system is to make use of nice properties of the heat semigroups and kernels for $\mathbb{R}^3_+$, $\mathbb{R}^3_{-}$, and $\mathbb{R}^2$. In Appendix, we derive our heat equations in the two half spaces and the interface from an energetic point of view.
\end{abstract}
\maketitle

\section{Introduction}

\begin{figure}[htbp]
\includegraphics[width=12cm]{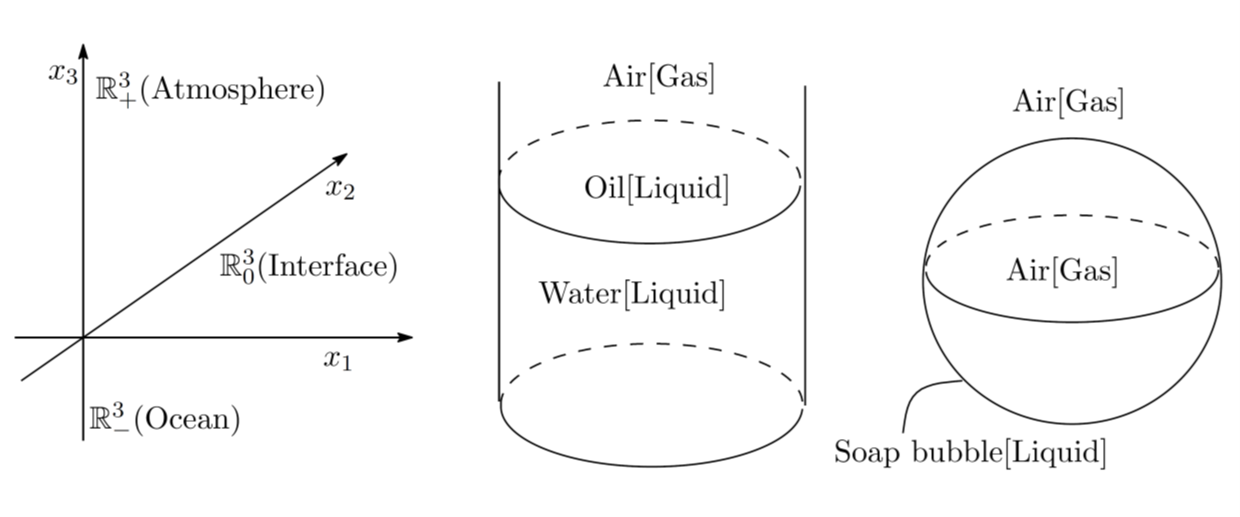}
\caption{Three-phase problems}
\label{Fig1}
\end{figure}

We are interested in the existence of a global-in-time strong solution to the heat equations in the two half spaces $\mathbb{R}^3_+$, $\mathbb{R}^3_-$, and the interface $\mathbb{R}^2 \times \{ 0 \} (\cong \mathbb{R}^2)$. The heat system is a simple heat transfer model for ocean-atmosphere with an interface, oil floating on water, or an soap bubble flying in the air (see Figure \ref{Fig1}). This paper has two purposes. The first one is to introduce and study some function spaces in the two half spaces and the interface. The second one is to apply our function spaces and the maximal $L^p$-regularity for Hilbert space-valued functions to construct a strong solution of our heat equations.

Let us first introduce basic notations. Let $x = { }^t (x_1, x_2, x_3) \in \mathbb{R}^3$, $x_h = { }^t (x_1, x_2) \in \mathbb{R}^2$ be the spatial variables, and $t , \tau, t_1 ,t_2 \geq 0$ be the time variables. The symbols $\nabla$, $\nabla_h$, and $\Delta$, $\Delta_h$ are two gradient and two Laplace operators defined by $\nabla = { }^t (\partial_1 , \partial_2 , \partial_3)$, $\nabla_h = { }^t ( \partial_1 , \partial_2)$, $\Delta = \partial_1^2 + \partial_2^2 + \partial_3^2$, and $\Delta_h = \partial_1^2 + \partial_2^2$, where $\partial_j = \partial/{\partial x_j}$. Set 
\begin{equation*}
\mathbb{R}^3_{+} := \{ x \in \mathbb{R}^3; { }x_3 >0 \},{ \ } \mathbb{R}^3_{-} := \{ x \in \mathbb{R}^3; { }x_3 < 0 \},{ \ } \mathbb{R}^3_{0} := \{ x \in \mathbb{R}^3; { }x_3 = 0 \}.
\end{equation*}
In this paper we equate functions in $\mathbb{R}^3_{0}$ to functions in $\mathbb{R}^2$.

This paper considers the existence of a global-in-time strong solution to the heat equations in the two half spaces $\mathbb{R}^3_+$, $\mathbb{R}^3_-$, and the interface $\mathbb{R}^2$:
\begin{equation}\label{eq11}
\begin{cases}
\alpha_A \partial_t \theta_A = \kappa_A \Delta \theta_A & \text{ in } \mathbb{R}^3_+ \times (0, T) ,\\
\alpha_B \partial_t \theta_B = \kappa_B \Delta \theta_B & \text{ in } \mathbb{R}^3_{-} \times (0, T ),\\
\alpha_S \partial_t \theta_S = \kappa_S \Delta_h \theta_S + \kappa_A \gamma_{+}[ \partial_3 \theta_A] - \kappa_B \gamma_{-}[ \partial_3 \theta_B] & \text{ in } \mathbb{R}^2 \times (0, T ),\\
\gamma_{+} [\theta_A] = \gamma_{-} [\theta_B] = \theta_S & \text{ in } \mathbb{R}^2 \times (0, T)
\end{cases}
\end{equation}
with
\begin{equation*}
\begin{cases}
{\displaystyle{\lim_{ \vert x \vert \to \infty , { \ }x \in \mathbb{R}^3_+ } \theta_A =0}},{ \ }{\displaystyle{\lim_{ \vert x \vert \to \infty,{ \ }x \in \mathbb{R}^3_{-} } \theta_B  =0}},{ \ }{\displaystyle{\lim_{ \vert x_h \vert \to \infty,{ \ }x_h \in \mathbb{R}^2 } \theta_S =0}} & \text{ on } (0, T),\\
\theta_A \vert_{t=0} = \theta^A_0{ \ } \text{ in } \mathbb{R}^3_+, { \ }\theta_B \vert_{t = 0} = \theta_0^B{ \ } \text{ in }\mathbb{R}^3_{-}, { \ }\theta_S \vert_{t=0} = \theta_0^S{ \ } \text{ in } \mathbb{R}^2,
\end{cases}
\end{equation*}
where $T \in (0, \infty]$, and $\gamma_{+}$, $\gamma_{-}$ are two trace operators such that
\begin{align*}
\gamma_{+}[\cdot] : W^{1,2} (\mathbb{R}^3_{+}) \to L^2 (\mathbb{R}^2)(= L^2 (\partial \mathbb{R}^3_+)),\\
\gamma_{-}[\cdot] : W^{1,2} (\mathbb{R}^3_{-}) \to L^2 (\mathbb{R}^2)(= L^2(\partial \mathbb{R}^3_-)).
\end{align*}
The unknown functions $\theta_A = \theta_A (x,t)$, $\theta_B = \theta_B (x,t)$, and $\theta_S = \theta_S (x_h,t)$ are the temperatures of the fluid(or substance) in $\mathbb{R}^3_{+}$, $\mathbb{R}^3_{-}$, and $\mathbb{R}^3_{0}$, respectively. The given positive constants $\kappa_A$, $\kappa_B$, and $\kappa_S$ are the thermal conductivities of the fluid(or substance) in $\mathbb{R}^3_{+}$, $\mathbb{R}^3_{-}$, and $\mathbb{R}^3_0$, respectively. The given functions $\theta^A_0 = \theta^A_0 (x)$, $\theta^B_0 = \theta^B_0 (x)$, $\theta^S_0 = \theta^S_0 (x_h)$ are three initial data. The symbols $\alpha_A$, $\alpha_B$, and $\alpha_S$ are given positive constants. Note that system \eqref{eq11} is just one possibility of the dominant equations for the temperatures on three phase problems. One can see Appendix (I) for the derivation of system \eqref{eq11} and $(\alpha_A,\alpha_B,\alpha_S)$.

Let us explain the background of this study. We are interested in the temperatures on three-phase problems such as ocean-atmosphere with an interface, oil floating on water, or an soap bubble flying in the air. The ocean-atmosphere interface is considered to play an important role in the flow and temperature of the fluid in the atmosphere and ocean. There are two cases in the study of the interface. One is when there is a material density at the interface, and the other is when there is no material density at the interface. We often call a material density at the interface a \emph{surface mass}. Gurtin \cite{Gur93} introduced some multiphase system without surface mass. Gatignol-Prud'homme \cite{GP01} and Slattery-Sagis-Oh \cite{SSO07} introduced some multiphase system with surface mass. It is necessary to consider the physical phenomena that occur at the interface when we study either surface tension or the movement of an interface. Indeed, Koba \cite{K23,K25} admitted the existence of surface mass in order to derive surface tension from a theoretical point of view. In this paper, we admit the existence of surface mass at the interface $\mathbb{R}^2 \times \{ 0 \}$.

To construct a strong solution of system \eqref{eq11}, we have to deal with both
\begin{equation}\label{eq12}
\kappa_A \gamma_{+}[ \partial_3 \theta_A] - \kappa_B \gamma_{-}[ \partial_3 \theta_B] \text{ and }\gamma_{+} [\theta_A] = \gamma_{-} [\theta_B] = \theta_S.
\end{equation} 
However, it is not easy to handle \eqref{eq12} directly, and the trace operators $\gamma_+,\gamma_-$ are not closed operators in general. To overcome these difficulties, we apply our function spaces in $\mathbb{R}^3_+$, $\mathbb{R}^3_-$, $\mathbb{R}^2$, and the maximal $L^p$-regularity for Hilbert space-valued functions. For that reason, in this paper we assume that
\begin{equation*}
\alpha_A = 1, { \ }\alpha_B = 1,{ \ } \kappa_S = \tilde{\kappa}_S \alpha_S.
\end{equation*}
We make use of both our maximal $L^p$-regularity theory and the condition that $\kappa_S = \tilde{\kappa}_S \alpha_S$  to construct a local-in-time strong solution of our system. Our methods allow us to handle the case when $\alpha_A >0$ and $\alpha_B >0$. For readability, we assume that $\alpha_A = 1$ and $\alpha_B = 1$ in this paper.

Now we state the main results of this paper.
\begin{theorem}\label{thm11}
Let $\kappa_A , \kappa_B , \tilde{\kappa}_S, \alpha_S >0$. Then there is $\alpha_0 = \alpha_0 ( \tilde{\kappa}_S) >0$ such that if $\alpha_S > \alpha_0$ then for each $\theta_0^A \in W^{1,2} (\mathbb{R}^3_{+})$, $\theta_0^B \in W^{1,2}( \mathbb{R}^3_{-})$, $\theta_0^S \in W^{1,2} (\mathbb{R}^2)$ satisfying $\gamma_{+}[\theta_0^A] = \theta_0^S$ and $\gamma_{-}[\theta_0^B] = \theta_0^S$, system \eqref{eq11} admits a unique global-in-time strong solution $(\theta_A , \theta_B, \theta_S)$:
\footnotesize
\begin{align*}
\theta_A & \in C ([0, \infty); L^2 (\mathbb{R}^3_{+})) \cap L_{loc}^2 (0,\infty; W^{2,2}(\mathbb{R}^3_+)) \cap W_{loc}^{1,2}(0,\infty ; L^2(\mathbb{R}^3_+)) \cap L^2_{loc}( \mathbb{R}_+ \times \mathbb{R}^3_{+}),\\
\theta_B & \in C ([0, \infty); L^2 (\mathbb{R}^3_{-})) \cap L_{loc}^2 (0,\infty; W^{2,2}(\mathbb{R}^3_{-})) \cap W_{loc}^{1,2}(0,\infty ; L^2(\mathbb{R}^3_{-})) \cap L^2_{loc}(\mathbb{R}_+ \times \mathbb{R}^3_{-}),\\
\theta_S & \in C ([0, \infty); L^2 (\mathbb{R}^2)) \cap L_{loc}^2 (0,\infty; W^{2,2}(\mathbb{R}^2)) \cap W_{loc}^{1,2}(0,\infty ; L^2(\mathbb{R}^2)) \cap L_{loc}^2 (\mathbb{R}_+ \times \mathbb{R}^2).
\end{align*}\normalsize
Moreover, for $0 \leq t_1 \leq t_2 < \infty$
\begin{multline}\label{eq13}
\Vert \theta_A (t_2) \Vert^2_{L^2 (\mathbb{R}^3_{+})} + \Vert \theta_B (t_2) \Vert^2_{L^2 (\mathbb{R}^3_{-})} + \alpha_S \Vert \theta_S (t_2) \Vert^2_{L^2 (\mathbb{R}^2)}\\
 + 2 \kappa_A \int_{t_1}^{t_2} \Vert \nabla \theta_A ( \tau) \Vert^2_{L^2 (\mathbb{R}^3_{+})}{ \ } d \tau + 2 \kappa_B \int_{t_1}^{t_2} \Vert \nabla \theta_B ( \tau ) \Vert^2_{L^2 (\mathbb{R}^3_{-})}{ \ }d \tau \\+  2 \tilde{\kappa}_S \alpha_S \int_{t_1}^{t_2} \Vert \nabla_h \theta_S (\tau) \Vert^2_{L^2 (\mathbb{R}^2)}{ \ }d \tau\\ = \Vert \theta_A (t_1) \Vert^2_{L^2 (\mathbb{R}^3_{+})} + \Vert \theta_B (t_1) \Vert^2_{L^2 (\mathbb{R}^3_{-})} + \alpha_S \Vert \theta_S (t_1) \Vert^2_{L^2 (\mathbb{R}^2)},
\end{multline}
and
\begin{equation*}
\lim_{t \to 0 + 0} ( \Vert \theta_A (t) - \theta_0^A \Vert_{L^2(\mathbb{R}^3_+)} + \Vert \theta_B (t) - \theta_0^B \Vert_{L^2(\mathbb{R}^3_-)} + \Vert \theta_S (t) - \theta_0^S \Vert_{L^2 (\mathbb{R}^2)}  ) = 0.
\end{equation*}
Here
\begin{align*}
& L^2_{loc}(0,\infty; W^{2,2} (\Omega)) = \{ \varphi; \varphi \in L^2(0,T; W^{2,2} (\Omega)) \text{ for each fixed } T >0  \},\\
& W^{1,2}_{loc}(0,\infty; L^2 (\Omega)) = \{ \varphi; \varphi \in W^{1,2}(0,T; L^2 (\Omega)) \text{ for each fixed } T >0  \},\\
& L^2_{loc}( \mathbb{R}_+ \times \Omega ) = \{ \varphi; \varphi \in L^2((0,T) \times \Omega ) \text{ for each fixed } T >0  \},
\end{align*}
where $\Omega = \mathbb{R}^3_+, \mathbb{R}^3_-$, or $\mathbb{R}^2$.
\end{theorem}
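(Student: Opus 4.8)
The plan is to recast \eqref{eq11} as a single \emph{linear} abstract Cauchy problem and then to combine the function spaces and the maximal $L^p$-regularity for Hilbert space-valued functions introduced earlier in the paper with the energy identity \eqref{eq13}. Writing $U = {}^{t}(\theta_A,\theta_B,\theta_S)$, I would work in the product Hilbert space $\mathbb{H} := L^2(\mathbb{R}^3_{+})\times L^2(\mathbb{R}^3_{-})\times L^2(\mathbb{R}^2)$, equipping the interface slot with the weight $\alpha_S$ so that its squared norm is the left-hand side of \eqref{eq13} at fixed time; with this weighted inner product the formal generator $\mathcal{A}$ (whose domain encodes the matching condition $\gamma_{+}[\theta_A]=\gamma_{-}[\theta_B]=\theta_S$ and the flux coupling of the third equation) is dissipative, since testing shows $\langle \mathcal{A}U,U\rangle = -\kappa_A\|\nabla\theta_A\|^2-\kappa_B\|\nabla\theta_B\|^2-\kappa_S\|\nabla_h\theta_S\|^2 \le 0$. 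Because the system is linear, everything reduces to showing that $\mathcal{A}$ generates a semigroup enjoying maximal $L^p$-regularity in $\mathbb{H}$.

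For the local-in-time strong solution I would exploit the heat semigroups and kernels for $\mathbb{R}^3_{+}$, $\mathbb{R}^3_{-}$, and $\mathbb{R}^2$. Regarding the common trace $\theta_S$ as time-dependent Dirichlet data, I solve the two half-space heat equations $\partial_t\theta_A=\kappa_A\Delta\theta_A$, $\gamma_{+}[\theta_A]=\theta_S$ (and similarly for $\theta_B$) by the inhomogeneous-boundary heat kernels, and then reinsert the resulting normal fluxes $\kappa_A\gamma_{+}[\partial_3\theta_A]-\kappa_B\gamma_{-}[\partial_3\theta_B]$ into the interface equation. Dividing the third equation by $\alpha_S$ and using $\kappa_S=\tilde{\kappa}_S\alpha_S$ turns it into a surface heat equation with fixed diffusion $\tilde{\kappa}_S$ driven by a Dirichlet--to--Neumann-type term carrying the prefactor $1/\alpha_S$. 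Since this coupling is of lower order than $\tilde{\kappa}_S\Delta_h$ and comes with the small factor $1/\alpha_S$, choosing $\alpha_S>\alpha_0(\tilde{\kappa}_S)$ makes it a subordinate perturbation, so the maximal $L^p$-regularity of the surface operator together with a contraction-mapping argument in our function spaces produces a unique local strong solution on some $[0,T_0]$ with the stated regularity.

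The energy equality \eqref{eq13} I would obtain by testing the three equations with $\theta_A$, $\theta_B$, and $\theta_S$, integrating over $\mathbb{R}^3_{+}$, $\mathbb{R}^3_{-}$, $\mathbb{R}^2$ respectively, and applying Green's identity, with the outward normals $-e_3$ on $\partial\mathbb{R}^3_{+}$ and $+e_3$ on $\partial\mathbb{R}^3_{-}$. The three boundary contributions $-\kappa_A\int\gamma_{+}[\partial_3\theta_A]\theta_S$, $+\kappa_B\int\gamma_{-}[\partial_3\theta_B]\theta_S$, and the explicit flux term $\int(\kappa_A\gamma_{+}[\partial_3\theta_A]-\kappa_B\gamma_{-}[\partial_3\theta_B])\theta_S$ from the interface equation cancel exactly because $\gamma_{+}[\theta_A]=\gamma_{-}[\theta_B]=\theta_S$. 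Using $\kappa_S=\tilde{\kappa}_S\alpha_S$ this yields
\[
\frac{1}{2}\frac{d}{dt}\left(\|\theta_A\|^2+\|\theta_B\|^2+\alpha_S\|\theta_S\|^2\right)+\kappa_A\|\nabla\theta_A\|^2+\kappa_B\|\nabla\theta_B\|^2+\tilde{\kappa}_S\alpha_S\|\nabla_h\theta_S\|^2=0,
\]
and integrating over $[t_1,t_2]$ gives \eqref{eq13}; some care is needed to justify the integrations by parts for strong solutions via the decay conditions at infinity and an approximation argument.

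Finally I would globalize and prove uniqueness. The energy equality provides a uniform-in-time bound on the $\mathbb{H}$-norm together with an $L^2_t(H^1_x)$ bound, so no blow-up can occur; since the system is linear and the local solution can be continued, the uniform a priori bound from \eqref{eq13} extends it to all of $[0,\infty)$, the maximal $L^p$-regularity estimate on each finite $[0,T]$ delivering the claimed $L^2_{loc}(0,\infty;W^{2,2})\cap W^{1,2}_{loc}(0,\infty;L^2)$ regularity and the continuity in $L^2$. Uniqueness follows by applying the energy equality to the difference of two solutions, which solves the homogeneous system with zero data and hence vanishes, while the initial-data convergence is read off from $\theta\in C([0,\infty);L^2)$. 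The main obstacle is the local existence step: the flux coupling $\kappa_A\gamma_{+}[\partial_3\theta_A]-\kappa_B\gamma_{-}[\partial_3\theta_B]$ loses a derivative relative to $\theta_A,\theta_B$ and the trace operators $\gamma_{+},\gamma_{-}$ are not closed, so the crux is to control this Dirichlet--to--Neumann coupling as a genuine perturbation, which is precisely where the hypothesis $\alpha_S>\alpha_0(\tilde{\kappa}_S)$, equivalently the small prefactor $1/\alpha_S$, is indispensable.
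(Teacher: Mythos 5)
Your overall architecture (local solvability, energy identity by testing, globalization via a time step independent of the data, uniqueness from the energy identity applied to a difference) matches the paper's, and your derivations of \eqref{eq13}, of uniqueness, and of the continuation argument are sound in outline. The genuine gap is in the local existence step, which is the heart of the theorem. You assert that the flux coupling $\tfrac{1}{\alpha_S}\left(\kappa_A\gamma_{+}[\partial_3\theta_A]-\kappa_B\gamma_{-}[\partial_3\theta_B]\right)$ is ``of lower order than $\tilde{\kappa}_S\Delta_h$'' and hence a subordinate perturbation, but as a function of the bulk unknowns it is \emph{top order}: by the trace theorem and elliptic regularity (Lemma \ref{lem21}(iii)), $\gamma_{\pm}[\partial_3\theta_{A,B}]$ is controlled only through $\Vert L_{A,B}\theta_{A,B}\Vert_{L^2}$, so in any contraction scheme this term enters with a constant that does \emph{not} shrink as $T\to 0$. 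The paper's resolution is quantitative and has two ingredients you omit entirely: first, the exponential lifting $u_A=\theta_A-\theta_S\mathrm{e}^{-\beta x_3}$, $u_B=\theta_B-\theta_S\mathrm{e}^{\beta x_3}$, which converts the matching condition into homogeneous Dirichlet conditions (so that the Dirichlet Laplacians $L_A,L_B$ and their maximal $L^2$-regularity apply) at the cost of source terms containing $du_S/dt$ and $\Delta_h u_S$ --- also top order --- but carrying the gain $1/\sqrt{2\beta}$ from Lemma \ref{lem24}; second, maximal $L^2$-regularity estimates with constants independent of $T$ (Lemma \ref{lem32}(iii), Lemma \ref{lem43}(iii)), which show that the top-order part of the iteration map has norm bounded by $K_SC_*/\alpha_S$ plus terms of size $C_*(K_A+K_B)(1+\kappa/\tilde{\kappa}_S)/\sqrt{\beta}$, whence the need for \emph{both} $\alpha_S>\alpha_0$ and $\beta>\beta_0$ (Remark \ref{rem44}). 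Your sketch provides no mechanism for this absorption: smallness of $T$ alone cannot close the fixed point, and you never introduce the parameter $\beta$ or any substitute for it.

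Two further points. Your first paragraph's claim that the coupled operator $\mathcal{A}$ ``generates a semigroup enjoying maximal $L^p$-regularity'' does not follow from dissipativity alone; Lumer--Phillips requires the range condition, and identifying $D(\mathcal{A})$ with the stated $W^{2,2}$ class requires elliptic regularity for the coupled transmission/Wentzell-type problem, neither of which you address. (If carried out via the associated symmetric nonnegative form, this route would in fact yield an analytic semigroup for \emph{every} $\alpha_S>0$ --- a genuinely different and stronger statement than the paper proves --- but as written it is an assertion, not a proof, and it sits in tension with your second paragraph, which falls back on a perturbative argument needing $\alpha_S$ large.) Likewise, solving the half-space problems with time-dependent Dirichlet data $\theta_S$ and estimating the resulting parabolic Dirichlet-to-Neumann map requires anisotropic trace-space mapping properties and compatibility conditions that you do not supply; the paper deliberately avoids this machinery through the lifting and through Lemma \ref{lem22}, which is how it circumvents the non-closedness of $\gamma_{\pm}$ that you correctly identify as the obstacle.
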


Let us explain the key ideas of constructing a strong solution to our heat equations. Let $\theta_0^A \in W^{1,2} (\mathbb{R}^3_{+})$, $\theta_0^B \in W^{1,2}( \mathbb{R}^3_{-})$, and $\theta_0^S \in W^{1,2} (\mathbb{R}^2)$ such that $\gamma_{+} [ \theta_0^A ] = \theta_0^S$ and $\gamma_{-}[\theta_0^B] = \theta_0^S$. Let $\beta >0$. Assume that $\theta_A$, $\theta_B$, and $\theta_S$ are smooth functions. Set
\begin{equation*}
\begin{cases}
u_A = u_A (x , t ) = \theta_A (x ,t ) - \theta_S ( x_h , t) {\rm{e}}^{- \beta x_3},\\
u_B = u_B (x , t ) = \theta_B (x ,t ) - \theta_S ( x_h , t) {\rm{e}}^{ \beta x_3},\\
u_S = u_S (x_h , t ) = \theta_S ( x_h , t ),
\end{cases}\begin{cases}
u_0^A = u_0^A (x) = \theta_0^A - \theta_0^S {\rm{e}}^{- \beta x_3},\\
u_0^B = u_0^B (x) = \theta_0^B  - \theta_0^S {\rm{e}}^{ \beta x_3},\\
u_0^S = u_0^S (x_h) = \theta_0^S.
\end{cases}
\end{equation*}
Then
\begin{equation}\label{eq14}
\begin{cases}
\partial_t u_A - \kappa_A \Delta u_A = F_1(u_A, u_B,u_S)  & \text{ in } \mathbb{R}^3_+ \times (0, T) ,\\
\partial_t u_B - \kappa_B \Delta u_B = F_2 (u_A, u_B, u_S) & \text{ in } \mathbb{R}^3_{-} \times (0, T ),\\
\partial_t u_S - \tilde{\kappa}_S \Delta_h u_S = F_3 (u_A, u_B , u_S) & \text{ in } \mathbb{R}^2 \times (0, T ),\\
\gamma_+ [u_A] = \gamma_{-} [u_B] = 0 & \text{ in } \mathbb{R}^2 \times (0, T),
\end{cases}
\end{equation}
with
\begin{equation*}
\begin{cases}
{\displaystyle{\lim_{ \vert x \vert \to \infty , { \ }x \in \mathbb{R}^3_+ } u_A =0}},{ \ }{\displaystyle{\lim_{ \vert x \vert \to \infty,{ \ }x \in \mathbb{R}^3_{-} } u_B  =0}},{ \ }{\displaystyle{\lim_{ \vert x_h \vert \to \infty,{ \ }x_h \in \mathbb{R}^2 } u_S =0}} & \text{ on } (0, T),\\
u_A \vert_{t=0} = u^A_0{ \ } \text{ in } \mathbb{R}^3_+, { \ }u_B \vert_{t = 0} = u_0^B{ \ } \text{ in }\mathbb{R}^3_{-}, { \ }u_S \vert_{t=0} = u_0^S{ \ } \text{ in } \mathbb{R}^2,
\end{cases}
\end{equation*}
where
\begin{align*}
F_1 (u_A ,u_B ,u_S ) &= - ( \partial_t u_S ) {\rm{e}}^{- \beta x_3} + (\kappa_A \Delta_h u_S ) {\rm{e}}^{- \beta x_3} + \beta^2 \kappa_A u_S {\rm{e}}^{- \beta x_3},\\
F_2 (u_A , u_B , u_S ) &= - ( \partial_t u_S ) {\rm{e}}^{\beta x_3} + ( \kappa_B \Delta_h u_S ) {\rm{e}}^{\beta x_3} + \beta^2 \kappa_B u_S {\rm{e}}^{ \beta x_3},\\
F_3 ( u_A , u_B ,u_S ) &= \frac{\kappa_A}{\alpha_S} \gamma_{+}[ \partial_3 u_A] - \frac{\kappa_B}{\alpha_S} \gamma_{-}[ \partial_3 u_B] - \frac{\beta}{\alpha_S} (\kappa_A + \kappa_B) u_S.
\end{align*}
Remark that
\begin{equation*}
u_0^A \in W_0^{1,2} ( \mathbb{R}^3_{+}),  { \ }u_0^B \in W_0^{1,2} ( \mathbb{R}^3_{-}), { \ }u_0^S \in W^{1,2} ( \mathbb{R}^2).
\end{equation*}

Applying a strong solution to system \eqref{eq14}, we construct a strong solution to system \eqref{eq11}. To show the existence of a strong solution to \eqref{eq14}, we introduce and study the following function spaces in $\mathbb{R}^3_+$, $\mathbb{R}^3_-$, and $\mathbb{R}^2$:
\begin{align*}
H & = \{ f = { }^t (f_A , f_B, f_S ) \in L^2 (\mathbb{R}^3_+) \times L^2 (\mathbb{R}^3_-) \times L^2 (\mathbb{R}^2); { \ } \Vert f \Vert_H < \infty \},\\
X_T & = \{ \varphi \in C ([0,T); H ); { \ } \Vert \varphi \Vert_{X_T} < \infty \}
\end{align*}
with 
\begin{align*}
\Vert f \Vert_H & = (\Vert f_A \Vert_{L^2(\mathbb{R}^3_+)}^2 + \Vert f_B \Vert_{L^2 (\mathbb{R}^3_-)}^2 + \Vert f_S \Vert_{L^2 (\mathbb{R}^2)}^2)^{1/2},\\
\Vert \varphi \Vert_{X_T} & = \sup_{0 \leq t <T}\Vert \varphi \Vert_H + \Vert d \varphi /{d t } \Vert_{L^2(0,T;H)} + \Vert L \varphi \Vert_{L^2(0,T;H)},
\end{align*}
where $L$ is the linear operator in $H$ defined by
\begin{equation*}
\begin{cases}
L f = { }^t ( - \kappa_A \Delta f_A , - \kappa_B \Delta f_B , - \tilde{\kappa}_S \Delta_h f_S ),\\
D (L) = [W_0^{1,2} \cap W^{2,2} (\mathbb{R}^3_+)] \times [W_0^{1,2} \cap W^{2,2} (\mathbb{R}^3_-)] \times W^{2,2} (\mathbb{R}^2).
\end{cases}
\end{equation*}
Using nice properties of $L$, we show the existence of a local-in-time strong solution to \eqref{eq14}. This paper studies the maximal $L^p$-regularity of $L$. Therefore, our results are extensions of some results in Desch-Hieber-Pr\"{u}ss \cite{DHP01}. They studied maximal $L^p$-regularity for the Stokes and Laplace operators in a half space. We improve a method in Koba \cite{K22} to construct a local-in-time strong solution to system \eqref{eq14}. They showed the existence of a local-in-time and global-in-time strong solution to the diffusion system on an evolving surface with a boundary. One can see some textbooks on PDEs for mathematical analysis of the heat equations in the whole and half spaces. Note that the structure of the heat system is different for one-phase and multi-phase problems. Indeed, the solution $\theta_S$ is not necessarily equal to zero even if the initial value $\theta_0^S$ is zero

Finally, we introduce the results related to this paper. Favini-Goldstein-Goldstein-Romanelli \cite{FGGR02} and Vogt-Voigt \cite{VV03} studied the Wentzell boundary conditions on a boundary of a $C^1$-domain in $\mathbb{R}^d$. In \cite{FGGR02}, they proved that the Laplace operator with the Wentzell boundary condition generates an analytic semigroup on their function space. In \cite{VV03}, they characterized the Wentzell boundary conditions by using the measure theory. In this paper, we consider an evolution equation on the boundary.

The outline of this paper is as follows: In Section \ref{sect2}, we recall fundamental properties of the Laplace operators for $\mathbb{R}^3_+$, $\mathbb{R}^3_-$, and $\mathbb{R}^2$. In Section \ref{sect3}, we derive useful properties of the linear operator $L$ to construct strong solutions of our system. In Section \ref{sect4}, we apply maximal $L^2$-regularity, heat semigroups, and heat kernels to show the existence of a global-in-time strong solution to system \eqref{eq14}. In Section \ref{sect5}, we make use of a strong solution to \eqref{eq14} to construct a global-in-time strong solution to system \eqref{eq11}, and deduce the energy equality for our system. In Appendix (I), we derive our heat equations in $\mathbb{R}^3_+$, $\mathbb{R}^3_-$, and the interface $\mathbb{R}^2$ from an energetic point of view. In Appendix (II), we study useful properties of analytic semigroups.


\section{Preliminaries}\label{sect2}

Let us recall basic properties of the Laplace operators. Let $\kappa_A, \kappa_B, \tilde{\kappa}_S >0$. Define the linear operator $L_A$ in $L^2(\mathbb{R}^3_+)$, $L_B$ in $L^2(\mathbb{R}^3_-)$, and $L_S$ in $L^2(\mathbb{R}^2)$ as follows:
\begin{align*}
&\begin{cases}
L_A f_A = - \kappa_A \Delta f_A,\\
D ( L_A ) = W_0^{1,2} (\mathbb{R}^3_{+}) \cap W^{2,2} ( \mathbb{R}^3_{+}),
\end{cases}{ \ }
\begin{cases}
L_B f_B = - \kappa_B \Delta f_B,\\
D ( L_B ) = W_0^{1,2} (\mathbb{R}^3_{-}) \cap W^{2,2} ( \mathbb{R}^3_{-}),
\end{cases}\\
&\begin{cases}
L_S f_S = - \tilde{\kappa}_S \Delta_h f_S,\\
D ( L_S ) = W^{2,2} ( \mathbb{R}^2).
\end{cases}
\end{align*}
We write the semigroup whose generator is $-L_\natural$ as ${\rm{e}}^{- t L_\natural}$ $(\natural = A,B,S)$. We call $L_A$,$L_B$, $L_S$ the \emph{Laplace operators}, and ${\rm{e}}^{- t L_A}$, ${\rm{e}}^{- t L_B}$, ${\rm{e}}^{- t L_S}$ \emph{heat semigroups}. It is well-known that these Laplace operators have the following properties.
\begin{lemma}[Fundamental properties of the Laplace operators]\label{lem21}{ \ }\\
$(\rm{i})$ {\rm{[Self-adjoint operator]}} Each operator $L_A$, $L_B$, and $L_S$ is a self-adjoint operator in $L^2 (\mathbb{R}^3_{+})$, $L^2 (\mathbb{R}^3_{-})$, and, $L^2 (\mathbb{R}^2)$, respectively.\\
$(\rm{ii})$ {\rm{[Fractional power]}} $D (L_A^{1/2}) = W_0^{1,2} (\mathbb{R}^3_{+})$, $D (L_B^{1/2}) = W_0^{1,2} (\mathbb{R}^3_{-})$, and\\ $D (L_S^{1/2}) = W^{1,2} (\mathbb{R}^2)$. Moreover, for all $f_A \in D (L_A^{1/2})$, $f_B \in D (L_B^{1/2})$, and $f_S \in D (L_S^{1/2})$,
\begin{align*}
\Vert L_A^{1/2} f_A \Vert_{L^2 (\mathbb{R}^3_{+})} & = \sqrt{ \kappa_A } \Vert \nabla f_A \Vert_{L^2(\mathbb{R}^3_{+})},\\
\Vert L_B^{1/2} f_B \Vert_{L^2 (\mathbb{R}^3_{-})} & = \sqrt{ \kappa_B } \Vert \nabla f_B \Vert_{L^2(\mathbb{R}^3_{-})},\\
\Vert L_S^{1/2} f_S \Vert_{L^2 (\mathbb{R}^2)} & = \sqrt{ \tilde{\kappa}_S} \Vert \nabla_h f_S \Vert_{L^2(\mathbb{R}^2)}.
\end{align*}
$(\rm{iii})$ {\rm{[Elliptic regularity]}} There is $C >0$ such that for all $f_A \in D (L_A)$, $f_B \in D (L_B)$, and $f_S \in D (L_S)$,
\begin{align*}
\Vert f_A \Vert_{W^{2,2} ( \mathbb{R}^3_{+})} + \Vert \gamma_{+}[\partial_3 f_A ] \Vert_{L^2 ( \mathbb{R}^2)} & \leq C \left( \frac{1}{\kappa_A}\Vert L_A f_A \Vert_{L^2 (\mathbb{R}^3_{+})} + \Vert f_A \Vert_{L^2 (\mathbb{R}^3_{+})} \right),\\
\Vert f_B \Vert_{W^{2,2} ( \mathbb{R}^3_{-})} + \Vert \gamma_{-}[\partial_3 f_B ] \Vert_{L^2 ( \mathbb{R}^2)} & \leq C \left( \frac{1}{\kappa_B} \Vert L_B f_B \Vert_{L^2 (\mathbb{R}^3_{-})} + \Vert f_B \Vert_{L^2 (\mathbb{R}^3_{-})} \right),\\
\Vert \nabla_h f_S \Vert_{L^2 ( \mathbb{R}^2)} + \Vert \nabla_h^2 f_S \Vert_{L^2 (\mathbb{R}^2)} & \leq C \left( \frac{1}{\tilde{\kappa}_S} \Vert L_S f_S \Vert_{L^2 (\mathbb{R}^2)} + \Vert f_S \Vert_{L^2 (\mathbb{R}^2)} \right).
\end{align*}
Here $C$ is independent of $(\kappa_A , \kappa_B , \tilde{\kappa}_S)$, and $\gamma_{+}$, $\gamma_{-}$ are two trace operators such that $\gamma_{+}[\cdot] : W^{1,2} (\mathbb{R}^3_{+}) \to L^2 (\mathbb{R}^2)$ and $\gamma_{-}[\cdot] : W^{1,2} (\mathbb{R}^3_{-}) \to L^2 (\mathbb{R}^2)$.\\
$(\rm{iv})$ {\rm{[$C_0$-semigroup]}} Each operator $- L_A$, $-L_B$, and, $-L_S$ generates a contraction $C_0$-semigroup on $L^2 (\mathbb{R}^3_{+})$, $L^2 (\mathbb{R}^3_{-})$, and, $L^2 (\mathbb{R}^2)$, respectively.\\
$(\rm{v})$ {\rm{[Analytic semigroup]}} Each operator $- L_A$, $-L_B$, and $-L_S$ generates a bounded analytic semigroup on $L^2 (\mathbb{R}^3_{+})$, $L^2 (\mathbb{R}^3_{-})$, and, $L^2 (\mathbb{R}^2)$, respectively.\\
$(\rm{vi})$ {\rm{[Heat kernel]}} For each $f_A \in L^2(\mathbb{R}^3_+)$, $f_B \in L^2(\mathbb{R}^3_-)$, and $f_S \in L^2(\mathbb{R}^2)$, ${\rm{e}}^{- t L_A } f_A = G_A * f_A$, ${\rm{e}}^{- t L_B} f_B = G_B*f_B$, and ${\rm{e}}^{ - t L_S} f_S = G_S * f_S$, where $G_A = G_A (x,t)$ denotes the heat kernel for $-L_A$, $G_B = G_B (x,t)$ the heat kernel for $-L_B$, and $G_S = G_S (x_h ,t) = {\rm{exp}}(- \vert x_h \vert^2/{4t})/{ 4 \pi t}$.\\
$(\rm{vii})$ {\rm{[Maximal $L^p$-regularity]}} Let $1<p< \infty$. Let $\mathcal{F}_A \in L^p (0, \infty ; L^2 (\mathbb{R}^3_{+}))$, $\mathcal{F}_B \in L^p (0, \infty ; L^2 (\mathbb{R}^3_{-}))$, and $\mathcal{F}_S \in L^p (0, \infty ; L^2 (\mathbb{R}^2))$. Then there are unique functions $(U_A , U_B, U_S)$ satisfying
\begin{equation*}
\begin{cases}
\frac{d}{dt} U_A+ L_A U_A = \mathcal{F}_A \text{ on } (0, \infty),\\
\frac{d}{dt} U_B+ L_B U_B = \mathcal{F}_B \text{ on } (0, \infty),\\
\frac{d}{dt} U_S+ L_S U_S = \mathcal{F}_S \text{ on } (0, \infty),\\
U_A \vert_{t=0} = 0,{ \ }U_B \vert_{t=0}=0,{ \ }U_S\vert_{t=0} =0,
\end{cases}
\end{equation*}
and
\begin{align*}
\| d U_A/{dt} \|_{L^p(0, \infty ; L^2 (\mathbb{R}^3_{+}))} + \| L_A U_A  \|_{L^p (0, \infty ; L^2(\mathbb{R}^3_{+}))} & \leq K_A \| \mathcal{F}_A \|_{L^p (0, \infty ; L^2(\mathbb{R}^3_{+}))},\\
\| d U_B/{dt} \|_{L^p(0, \infty ; L^2 (\mathbb{R}^3_{-}))} + \| L_B U_B  \|_{L^p (0,\infty; L^2(\mathbb{R}^3_{-}))} & \leq K_B \| \mathcal{F}_B \|_{L^p (0,\infty; L^2(\mathbb{R}^3_{-}))},\\
\| d U_S/{dt} \|_{L^p(0,\infty ; L^2 (\mathbb{R}^2))} + \| L_S U_S  \|_{L^p (0,\infty ; L^2(\mathbb{R}^2))} & \leq  K_S \| \mathcal{F}_S \|_{L^p (0,\infty; L^2(\mathbb{R}^2))}.
\end{align*}
Here $K_A = K_A (\kappa_A,p)$, $K_B = K_B (\kappa_B,p)$, $K_S = K_S (\tilde{\kappa}_S,p) >0$ are independent of $(\mathcal{F}_A,\mathcal{F}_B, \mathcal{F}_S)$.

\end{lemma}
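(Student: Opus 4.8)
The plan is to treat the tangential operator $L_S$ on $\mathbb{R}^2$ directly by the Fourier transform, and to reduce the two Dirichlet half-space operators $L_A$, $L_B$ to whole-space problems by odd reflection across $\{ x_3 = 0 \}$; once self-adjointness and nonnegativity are established, all seven assertions follow from the spectral theorem together with standard semigroup theory.

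First I would set up the quadratic forms. On $\mathbb{R}^2$ define $a_S(f,g) = \tilde{\kappa}_S ( \nabla_h f , \nabla_h g)_{L^2(\mathbb{R}^2)}$ with form domain $W^{1,2}(\mathbb{R}^2)$, and on the half spaces $a_A(f,g) = \kappa_A(\nabla f , \nabla g)_{L^2(\mathbb{R}^3_+)}$, $a_B(f,g) = \kappa_B(\nabla f , \nabla g)_{L^2(\mathbb{R}^3_-)}$ with form domains $W_0^{1,2}$. Each form is symmetric, nonnegative, densely defined, and closed, so Kato's first representation theorem yields a nonnegative self-adjoint operator; identifying its domain with $D(L_\natural)$ as stated uses the elliptic regularity of part $(\mathrm{iii})$, which I prove next, and this gives $(\mathrm{i})$. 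For $L_S$ one may alternatively note at once that the Fourier transform conjugates $L_S$ to multiplication by $\tilde{\kappa}_S |\xi|^2$, a nonnegative self-adjoint multiplier whose maximal domain is exactly $W^{2,2}(\mathbb{R}^2)$.

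For $(\mathrm{iii})$ I would exploit the Plancherel identity $\sum_{i,j} \Vert \partial_i \partial_j f \Vert_{L^2(\mathbb{R}^n)}^2 = \Vert \Delta f \Vert_{L^2(\mathbb{R}^n)}^2$. On $\mathbb{R}^2$ this delivers the Hessian bound for $L_S$ after dividing by $\tilde{\kappa}_S$, while the first-order term follows from $\Vert \nabla_h f \Vert^2 = (f , - \Delta_h f) \leq \frac{1}{2}( \Vert f \Vert^2 + \Vert \Delta_h f \Vert^2 )$. For the half spaces, odd reflection across $x_3 = 0$ maps $W_0^{1,2} \cap W^{2,2}(\mathbb{R}^3_\pm)$ (up to a constant factor, isometrically) into $W^{2,2}(\mathbb{R}^3)$ and intertwines $\Delta$ with the whole-space Laplacian, so the whole-space Hessian estimate transfers to the half space; the normal-trace term $\Vert \gamma_\pm[\partial_3 f] \Vert_{L^2(\mathbb{R}^2)}$ is then controlled by the trace theorem applied to $\partial_3 f \in W^{1,2}(\mathbb{R}^3_\pm)$, hence by $\Vert f \Vert_{W^{2,2}}$. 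Since $L_\natural$ equals $-\kappa_\natural \Delta$ (respectively $-\tilde{\kappa}_S \Delta_h$), the conductivity is absorbed into the term $\frac{1}{\kappa_\natural} \Vert L_\natural f \Vert = \Vert \Delta f \Vert$, which is precisely why the resulting $C$ can be taken independent of $(\kappa_A, \kappa_B, \tilde{\kappa}_S)$. I expect $(\mathrm{iii})$ to be the main obstacle, since one must keep the constant $\kappa$-free while simultaneously producing the boundary trace estimate.

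With self-adjointness and nonnegativity in hand, the rest is formal. Part $(\mathrm{ii})$ is Kato's second representation theorem: the form domain equals $D(L_\natural^{1/2})$ with $\Vert L_\natural^{1/2} f \Vert^2 = a_\natural(f,f)$, giving the three norm identities. Parts $(\mathrm{iv})$ and $(\mathrm{v})$ follow from the spectral theorem, since $\sigma(L_\natural) \subset [0,\infty)$ forces $\Vert {\rm{e}}^{-tL_\natural} \Vert \leq 1$ and yields a bounded analytic semigroup. For $(\mathrm{vi})$ the whole-space kernel $G_S$ is the stated Gaussian, and on the half spaces the method of images gives $G_A(x,y,t) = \Gamma_{\kappa_A}(x-y,t) - \Gamma_{\kappa_A}(x - y^\ast , t)$ with $y^\ast$ the reflection of $y$ and $\Gamma_\kappa$ the free kernel of diffusivity $\kappa$, consistent with the odd-reflection reduction. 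Finally $(\mathrm{vii})$ is immediate from $(\mathrm{v})$ by de Simon's theorem: on a Hilbert space the generator of a bounded analytic $C_0$-semigroup has maximal $L^p$-regularity for every $1<p<\infty$, and the dependence of $K_\natural$ on $(\kappa_\natural , p)$ arises from the parabolic scaling $x \mapsto \sqrt{\kappa_\natural}\, x$.
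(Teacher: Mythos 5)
Your proposal is correct, and it is genuinely more self-contained than what the paper does: the paper's ``proof'' of this lemma consists of citations (Sohr for (i)--(ii), Gilbarg--Trudinger for (iii), Pazy and Desch--Hieber--Pr\"uss for (iv)--(v), Ukai for (vi), De Simon for (vii)) plus a single explicit computation, namely the normal-trace bound \eqref{eq21}, which is obtained exactly as in your argument --- trace theorem applied to $\partial_3 f_A \in W^{1,2}(\mathbb{R}^3_+)$, then the $W^{2,2}$ elliptic estimate, then absorbing $\kappa_A$ via $\frac{1}{\kappa_A}\Vert L_A f_A\Vert_{L^2(\mathbb{R}^3_+)} = \Vert \Delta f_A\Vert_{L^2(\mathbb{R}^3_+)}$. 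Where you diverge is that you replace each citation with a direct argument: quadratic forms and Kato's two representation theorems for (i)--(ii), odd reflection plus the Plancherel identity $\sum_{i,j}\Vert\partial_i\partial_j f\Vert_{L^2}^2 = \Vert\Delta f\Vert_{L^2}^2$ for the $W^{2,2}$ bound in (iii), the spectral theorem for (iv)--(v), and the method of images for (vi); (vii) via De Simon is the same route the paper takes. What your approach buys is transparency about the two points the paper leaves implicit: the $\kappa$-independence of $C$ in (iii) (which in your setup is automatic because the constant comes from a $\kappa$-free identity for $\Vert\Delta f\Vert$) and the identification of the form domain $W_0^{1,2}$ with $D(L_\natural^{1/2})$ in (ii); what it costs is that you must verify the one delicate step of the reflection argument, namely that the odd extension of $f\in W_0^{1,2}\cap W^{2,2}(\mathbb{R}^3_\pm)$ actually lies in $W^{2,2}(\mathbb{R}^3)$ --- this holds because the tangential first derivatives extend oddly with vanishing interface trace (as $\gamma_\pm[f]=0$) while $\partial_3 f$ extends evenly, so all first-derivative traces match and no singular part appears in the second derivatives; with that observation supplied, your proof is complete.
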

\noindent See \cite[Chapter II.3]{Soh01} for the assertions $(\rm{i})$, $(\rm{ii})$, \cite[Chapters 8,9]{GT98} for $(\rm{iii})$, \cite{Paz83}, \cite{DHP01} for $(\rm{iv})$, $(\rm{v})$, \cite{Uka87} for $(\rm{vi})$ and heat kernels for half spaces, and \cite{DHP01} for $(\rm{vii})$. See also \cite{DHP03} and \cite{KW04} for maximal $L^p$-regularity. From \cite{Des64}, we see that if a linear operator $- \mathcal{L}$ on a Hilbert space $\mathcal{H}$ generates a bounded analytic semigroup on $\mathcal{H}$ then $\mathcal{L}$ has maximal $L^p$-regularity.
\begin{proof}[Proof of Lemma \ref{lem21}]
We only prove that there is $C>0$ independent of $\kappa_A$ such that for all $f_A\in D (L_A)$
\begin{equation}\label{eq21}
\Vert \gamma_+[\partial_3 f_A ] \Vert_{L^2( \mathbb{R}^2)} \leq \frac{C}{\kappa_A} \Vert L_A f_A \Vert_{L^2 (\mathbb{R}^3_+)} + C \Vert f_A  \Vert_{L^2( \mathbb{R}^3_+)}.
\end{equation}
Set $L_{A_0} f = (1- \Delta) f$ and $D (L_{A_0}) = W^{1,2}_0 ( \mathbb{R}^3_+) \cap W^{2,2}(\mathbb{R}^3_+)$. Let $f_A \in D (L_A) = D (L_{A_0})$. Using the elliptic regularity(\cite[Chapter 9]{GT98}), we check that
\begin{align*}
\Vert \gamma_+[\partial_3 f_A] \Vert_{L^2 (\mathbb{R}^2)} & \leq C \Vert \partial_3 f_A \Vert_{W^{1,2} (\mathbb{R}^3_+)}\\
& \leq C \Vert f_A \Vert_{W^{2,2} (\mathbb{R}^3_+)}\\
& \leq C \Vert (1- \Delta ) f_A \Vert_{L^2 (\mathbb{R}^3_+)}\\
& \leq \frac{C}{\kappa_A} \Vert L_A f_A \Vert_{L^2 (\mathbb{R}^3_+)} + C \Vert f_A \Vert_{L^2(\mathbb{R}^3_+)}.
\end{align*}
Thus, we have \eqref{eq21}.
  \end{proof}

Next, we study the trace operators $\gamma_{\pm}$. Let us check that the trace operators $\gamma_+,\gamma_-$ can be considered as closed operators under some special conditions.
\begin{lemma}[Trace operators]\label{lem22}
Let $\{ f_A^m \}_{m \in \mathbb{N}} \subset D (L_A)$, $f_A \in D(L_A)$, $\{ f_B^m \}_{m \in \mathbb{N}} \subset D (L_B)$, $f_B \in D(L_B)$, and $f_S \in L^2(\mathbb{R}^2)$. Assume that
\begin{align*}
&\lim_{m \to \infty} \Vert f_A^m- f_A \Vert_{W^{2,2} (\mathbb{R}^3_+)} =0,\\
&\lim_{m \to \infty} \Vert f_B^m - f_B \Vert_{W^{2,2} (\mathbb{R}^3_-)} =0,\\
&\lim_{m \to \infty} \Vert (\kappa_A \gamma_+ [\partial_3 f_A^m] - \kappa_B \gamma_-[\partial_3 f_B^m]) - f_S \Vert_{L^2(\mathbb{R}^2)} = 0.
\end{align*}
Then
\begin{equation}\label{eq22}
f_S = \kappa_A \gamma_+ [\partial_3 f_A] - \kappa_B \gamma_-[\partial_3 f_B] \text{ in }L^2(\mathbb{R}^2).
\end{equation}
\end{lemma}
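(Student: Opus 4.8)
The plan is to exploit the boundedness of the trace operators as maps $W^{1,2} \to L^2$, rather than to attempt to treat $\gamma_{\pm}$ as (non-closed) unbounded operators on $L^2$, and then to conclude by uniqueness of limits in $L^2(\mathbb{R}^2)$. The guiding observation is that, although $\gamma_+$ and $\gamma_-$ fail to be closed as operators from $L^2(\mathbb{R}^3_{\pm})$ into $L^2(\mathbb{R}^2)$, the hypotheses furnish convergence in the stronger $W^{2,2}$-topology; this is precisely the topology in which the trace of the normal derivative depends continuously on the data. The ``special conditions'' alluded to in the statement are exactly this $W^{2,2}$-convergence of $f_A^m$ and $f_B^m$.

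First I would upgrade the given convergence of the functions to convergence of their normal derivatives: since
\[
\Vert \partial_3 ( f_A^m - f_A) \Vert_{W^{1,2}(\mathbb{R}^3_+)} \leq \Vert f_A^m - f_A \Vert_{W^{2,2}(\mathbb{R}^3_+)} \longrightarrow 0,
\]
we have $\partial_3 f_A^m \to \partial_3 f_A$ in $W^{1,2}(\mathbb{R}^3_+)$, and likewise $\partial_3 f_B^m \to \partial_3 f_B$ in $W^{1,2}(\mathbb{R}^3_-)$. Next I would invoke the continuity of the trace operators $\gamma_{+}: W^{1,2}(\mathbb{R}^3_+) \to L^2(\mathbb{R}^2)$ and $\gamma_{-}: W^{1,2}(\mathbb{R}^3_-) \to L^2(\mathbb{R}^2)$ (the same trace estimate used in the proof of Lemma \ref{lem21}), which, combined with the previous step, gives
\[
\gamma_+[\partial_3 f_A^m] \to \gamma_+[\partial_3 f_A], \qquad \gamma_-[\partial_3 f_B^m] \to \gamma_-[\partial_3 f_B] \quad \text{in } L^2(\mathbb{R}^2).
\]
Hence $\kappa_A \gamma_+[\partial_3 f_A^m] - \kappa_B \gamma_-[\partial_3 f_B^m]$ converges in $L^2(\mathbb{R}^2)$ to $\kappa_A \gamma_+[\partial_3 f_A] - \kappa_B \gamma_-[\partial_3 f_B]$.

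To finish, I would note that the third hypothesis asserts that this very sequence also converges to $f_S$ in $L^2(\mathbb{R}^2)$; by uniqueness of limits in $L^2(\mathbb{R}^2)$ the two limits coincide, which is exactly \eqref{eq22}. The step carrying the real content, and the one I expect to be the crux, is the first: recognizing that $W^{2,2}$-convergence of $f_A^m$ forces $W^{1,2}$-convergence of $\partial_3 f_A^m$, since this is precisely what allows the otherwise non-closed trace map to commute with the limit. Once that upgrade is secured, the remaining arguments are routine applications of trace continuity and uniqueness of limits.
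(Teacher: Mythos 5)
Your proposal is correct and follows essentially the same route as the paper: the paper's proof is a triangle-inequality rendering of your ``convergence plus uniqueness of limits'' argument, with the key estimate being exactly the continuity bound $\Vert \gamma_{\pm}[\partial_3 (f^m - f)] \Vert_{L^2(\mathbb{R}^2)} \leq C \Vert f^m - f \Vert_{W^{2,2}}$ that you identify as the crux.
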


\begin{proof}[Proof of Lemma \ref{lem22}]
By assumptions and basic properties of the trace operators $\gamma_\pm$, we easily check that
\begin{multline*}
\Vert (\kappa_A \gamma_+ [\partial_3 f_A] - \kappa_B \gamma_-[\partial_3 f_B]) - f_S \Vert_{L^2(\mathbb{R}^2)}\\
 \leq  \Vert (\kappa_A \gamma_+ [\partial_3 f_A] - \kappa_B \gamma_- [\partial_3 f_B] )-  (\kappa_A \gamma_+ [\partial_3 f_A^m] - \kappa_B \gamma_- [\partial_3 f_B^m] ) \Vert_{L^2(\mathbb{R}^2)}\\ +  \Vert ( \kappa_A \gamma_+ [\partial_3 f_A^m] - \kappa_B \gamma_-[\partial_3 f_B^m]) - f_S \Vert_{L^2(\mathbb{R}^2)}\\
\leq C(\kappa_A) ( \Vert f_A^m - f_A \Vert_{W^{2,2} (\mathbb{R}^3_+)} +  C (\kappa_B) \Vert f_B^m - f_B \Vert_{W^{2,2} (\mathbb{R}^3_-)} \\
+ \Vert ( \kappa_A \gamma_+ [\partial_3 f_A^m] - \kappa_B \gamma_-[\partial_3 f_B^m]) - f_S \Vert_{L^2(\mathbb{R}^2)} \to 0 \text{ as } m \to \infty.
\end{multline*}
Therefore, we see \eqref{eq22}.
  \end{proof}

Finally, we introduce basic properties of analytic semigroups and one useful lemma.
\begin{lemma}\label{lem23}
Let $\mathcal{H}$ be a Hilbert space and $\Vert \cdot \Vert_{\mathcal{H}}$ its norm. Let $\mathcal{L}: D (\mathcal{L}) (\subset \mathcal{H}) \to \mathcal{H}$ be a linear operator on $\mathcal{H}$. Assume that $- \mathcal{L}$ generates a bounded analytic semigroup on $\mathcal{H}$. Let $V_0 \in H$ and $\mathcal{F} \in C^{\eta}_{loc} ((0, T) ; \mathcal{H}) \cap L^2(0,T;\mathcal{H})$ for some $0 < \eta \leq 1/2$ and $T \in (0 , \infty ) $. Then system
\begin{equation*}
\begin{cases}
\frac{d}{dt}V + \mathcal{L} V = \mathcal{F} \text{ on }(0, T),\\
V \vert_{t=0} = V_0,
\end{cases}
\end{equation*}
admits a unique strong solution $V$ in
\begin{equation*}
C([0, T ) ; \mathcal{H}) \cap C ((0, T); D (\mathcal{L}) ) \cap C^1 ((0,T); \mathcal{H}),
\end{equation*}
and $V$ is written by 
\begin{equation*}
V (t) = {\rm{e}}^{- t \mathcal{L}} V_0 + \int_0^t {\rm{e}}^{- (t- \tau ) \mathcal{L}} \mathcal{F} (\tau ) { \ }d \tau \text{ for } 0 < t <T.
\end{equation*}
Moreover, assume in addition that $V_0 \in D (\mathcal{L}^{1/2})$. Then
\begin{equation}\label{eq23}
V,  dV/{dt}, \mathcal{L}V \in C^{\eta/2}_{loc} ((0, T) ; \mathcal{H}).
\end{equation}
\end{lemma}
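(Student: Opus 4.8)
The plan is to write the candidate solution as $V = u + w$ with $u(t) = {\rm e}^{-t\mathcal{L}}V_0$ and $w(t) = \int_0^t {\rm e}^{-(t-\tau)\mathcal{L}}\mathcal{F}(\tau)\,d\tau$, and to treat the two pieces by the standard theory of bounded analytic semigroups (the relevant estimates being those collected in Appendix (II); see also \cite{Paz83}). The two facts I would lean on are the smoothing bound $\Vert \mathcal{L}^{\theta}{\rm e}^{-t\mathcal{L}}\Vert \le C_{\theta}t^{-\theta}$ for $t>0$ and $0\le\theta\le 1$, and its companion $\Vert({\rm e}^{-s\mathcal{L}}-I)y\Vert \le C_{\theta}s^{\theta}\Vert\mathcal{L}^{\theta}y\Vert$ for $y\in D(\mathcal{L}^{\theta})$. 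For the homogeneous part, strong continuity gives $u\in C([0,T);\mathcal{H})$ with $u(0)=V_0$, while analyticity gives $u\in C((0,T);D(\mathcal{L}))\cap C^{1}((0,T);\mathcal{H})$ and $\tfrac{d}{dt}u=-\mathcal{L}u$ on $(0,T)$.

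Next I would establish the regularity of $w$. Cauchy--Schwarz and $\mathcal{F}\in L^2(0,T;\mathcal{H})$ give $\Vert w(t)\Vert\le C\Vert\mathcal{F}\Vert_{L^2(0,t;\mathcal{H})}t^{1/2}\to 0$, so $w\in C([0,T);\mathcal{H})$ with $w(0)=0$. For fixed $t\in(0,T)$, to see $w(t)\in D(\mathcal{L})$ and to differentiate, I would use the classical decomposition
\[
\mathcal{L}w(t)=\int_0^t \mathcal{L}{\rm e}^{-(t-\tau)\mathcal{L}}[\mathcal{F}(\tau)-\mathcal{F}(t)]\,d\tau+\mathcal{F}(t)-{\rm e}^{-t\mathcal{L}}\mathcal{F}(t),
\]
which is legitimate because $\mathcal{L}$ is closed once the first integral is shown to converge. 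On a compact subinterval of $(0,T)$ around $t$, the local Hölder bound $\Vert\mathcal{F}(\tau)-\mathcal{F}(t)\Vert\le C|t-\tau|^{\eta}$ together with $\Vert\mathcal{L}{\rm e}^{-(t-\tau)\mathcal{L}}\Vert\le C(t-\tau)^{-1}$ makes the integrand $O((t-\tau)^{\eta-1})$, hence integrable; near $\tau=0$ the factor $t-\tau$ is bounded below and $\mathcal{F}\in L^1_{loc}$ suffices. Differentiating Duhamel's formula then yields $\tfrac{d}{dt}w+\mathcal{L}w=\mathcal{F}$ on $(0,T)$, so $V=u+w$ is a strong solution with the stated representation and membership. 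Uniqueness follows from the usual device: if $W$ is the difference of two solutions, then $W(0)=0$ and $s\mapsto{\rm e}^{-(t-s)\mathcal{L}}W(s)$ has vanishing derivative on $(0,t)$, so letting $s\to 0^+$ and $s\to t^-$ gives $W(t)={\rm e}^{-t\mathcal{L}}W(0)=0$.

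For the additional regularity \eqref{eq23}, assume $V_0\in D(\mathcal{L}^{1/2})$ and estimate Hölder seminorms on compact $[a,b]\subset(0,T)$. For the homogeneous part I would write $\mathcal{L}u(t)-\mathcal{L}u(s)=({\rm e}^{-(t-s)\mathcal{L}}-I)\mathcal{L}^{1/2}{\rm e}^{-s\mathcal{L}}\,\mathcal{L}^{1/2}V_0$ for $s<t$ and apply the two estimates above with $\theta=\eta/2$, obtaining $\Vert\mathcal{L}u(t)-\mathcal{L}u(s)\Vert\le C(t-s)^{\eta/2}s^{-(1+\eta)/2}\Vert\mathcal{L}^{1/2}V_0\Vert$, which is bounded for $s\in[a,b]$; the bound for $u$ itself is even better (it is $1/2$-Hölder). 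For the inhomogeneous part, the local Hölder continuity of $\mathcal{F}$ propagates through the decomposition above to give local Hölder continuity of $\mathcal{L}w$ with exponent $\eta$, hence a fortiori with exponent $\eta/2$. Since an $\eta$-Hölder function is $\eta/2$-Hölder on a bounded interval, and $\tfrac{dV}{dt}=\mathcal{F}-\mathcal{L}V$, combining the pieces gives $V,\,dV/dt,\,\mathcal{L}V\in C^{\eta/2}_{loc}((0,T);\mathcal{H})$.

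The hard part will be the Hölder estimate for $\mathcal{L}w$ (equivalently for $dV/dt$), which is the abstract parabolic Schauder estimate: one must bound $\mathcal{L}w(t)-\mathcal{L}w(s)$ by splitting the time integral at $\tau=s$ and controlling each piece via the Hölder modulus of $\mathcal{F}$ and the singular bound for $\Vert\mathcal{L}{\rm e}^{-r\mathcal{L}}\Vert$. The only subtlety is that all Hölder estimates must be confined to compact subintervals of $(0,T)$, because the merely $L^2$ (rather than Hölder) control of $\mathcal{F}$ near $t=0$ obstructs uniform Hölder continuity down to the initial time.
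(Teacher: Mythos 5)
Your proposal is correct and follows essentially the same route as the paper: existence, uniqueness and the Duhamel representation are delegated to standard analytic-semigroup theory (the paper cites \cite{Paz83} and \cite{K22} for exactly this), and the extra regularity \eqref{eq23} is obtained from the two estimates \eqref{eq71}--\eqref{eq72} by writing $\mathcal{L}V(t_2)-\mathcal{L}V(t_1)$ as the homogeneous contribution plus the Duhamel integral split at $\tau=t_1$, which is precisely the paper's decomposition $P_1+\dots+P_5$ in Appendix (II). The only cosmetic difference is in the cross term $P_3$: you would use the bound $\Vert\mathcal{L}^2{\rm e}^{-r\mathcal{L}}\Vert\le Cr^{-2}$ to get exponent $\eta$ for the inhomogeneous part, whereas the paper applies $({\rm e}^{-(t_2-t_1)\mathcal{L}}-1)$ with the fractional power $\eta/2$ and settles for $\eta/2$ directly; both give the stated $C^{\eta/2}_{loc}$ conclusion.
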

\noindent By using an argument in \cite[Chapter 4]{Paz83} or \cite{K22}, we can prove Lemma \ref{lem23}. For the readers, we derive \eqref{eq23} in Appendix (II). 
\begin{lemma}\label{lem24}
Let $\beta >0$ and $f_S \in L^2 (\mathbb{R}^2) $. Then
\begin{align*}
\Vert {\rm{e}}^{- \beta x_3} f_S (x_h) \Vert_{L^2 (\mathbb{R}^3_{+})} & \leq \frac{1}{\sqrt{2 \beta}} \Vert f_S \Vert_{L^2 (\mathbb{R}^2)},\\
\Vert {\rm{e}}^{ \beta x_3} f_S (x_h) \Vert_{L^2 (\mathbb{R}^3_{-})} & \leq \frac{1}{\sqrt{2 \beta}} \Vert f_S \Vert_{L^2 (\mathbb{R}^2)}.
\end{align*}
\end{lemma}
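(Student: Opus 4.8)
The plan is to compute the two $L^2$-norms directly, since each factors into a product of an integral over the horizontal variable $x_h \in \mathbb{R}^2$ and a one-dimensional exponential integral over the vertical variable $x_3$. For the first estimate I would square the left-hand side and write
\begin{equation*}
\Vert {\rm{e}}^{- \beta x_3} f_S (x_h) \Vert_{L^2 (\mathbb{R}^3_{+})}^2 = \int_{\mathbb{R}^3_+} {\rm{e}}^{-2 \beta x_3} \vert f_S (x_h) \vert^2 { \ } dx,
\end{equation*}
using the product structure $\mathbb{R}^3_+ = \mathbb{R}^2 \times (0, \infty)$. Since the integrand is nonnegative, Tonelli's theorem applies with no integrability concerns, and I would separate the two integrations to obtain the product
\begin{equation*}
\left( \int_{\mathbb{R}^2} \vert f_S (x_h) \vert^2 { \ } dx_h \right) \left( \int_0^\infty {\rm{e}}^{-2 \beta x_3} { \ } dx_3 \right).
\end{equation*}

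The second factor is the elementary integral $\int_0^\infty {\rm{e}}^{-2 \beta x_3} { \ } dx_3 = 1/(2 \beta)$, valid precisely because $\beta > 0$. Substituting gives
\begin{equation*}
\Vert {\rm{e}}^{- \beta x_3} f_S (x_h) \Vert_{L^2 (\mathbb{R}^3_{+})}^2 = \frac{1}{2 \beta} \Vert f_S \Vert_{L^2 (\mathbb{R}^2)}^2,
\end{equation*}
and taking square roots yields the claimed bound. I would note that this is in fact an equality, which is slightly stronger than the stated inequality.

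For the second estimate the argument is entirely symmetric: on $\mathbb{R}^3_- = \mathbb{R}^2 \times (-\infty, 0)$ the same factorization produces the vertical integral $\int_{-\infty}^0 {\rm{e}}^{2 \beta x_3} { \ } dx_3 = 1/(2 \beta)$ (equivalently, one reflects via $x_3 \mapsto - x_3$ to reduce to the first computation), giving the identical constant. There is genuinely no hard step here: the only thing worth remarking is the justification of the Tonelli factorization (immediate from nonnegativity of the integrand) and the convergence of the exponential integral, which is exactly where the hypothesis $\beta > 0$ is used. Hence both bounds follow at once.
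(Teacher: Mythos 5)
Your proof is correct and follows essentially the same route as the paper: both factor the $L^2$-norm over the product structure $\mathbb{R}^2 \times (0,\infty)$ (resp. $\mathbb{R}^2 \times (-\infty,0)$) and evaluate the one-dimensional exponential integral $\int_0^\infty {\rm{e}}^{-2\beta x_3}\, dx_3 = 1/(2\beta)$. Your added observation that the bound is in fact an equality is a harmless (and accurate) sharpening of what the paper states.
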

\begin{proof}[Proof of Lemma \ref{lem24}]
Fix $\beta >0$ and $f_S \in L^2 (\mathbb{R}^2) $. Since
\begin{equation*}
\left( \int_0^\infty {\rm{e}}^{ - 2 \beta x_3} d x_3 \right)^{1/2} = \frac{1}{ \sqrt{2\beta} } \text{ and } \left( \int_{-\infty}^0 {\rm{e}}^{ 2 \beta x_3} d x_3 \right)^{1/2} = \frac{1}{ \sqrt{2\beta} },
\end{equation*}
we easily check that
\begin{align*}
\Vert {\rm{e}}^{- \beta x_3} f_S (x_h) \Vert_{L^2 (\mathbb{R}^3_{+})} & \leq \Vert {\rm{e}}^{ - \beta \cdot } \Vert_{L^2(0,\infty)} \Vert f_h \Vert_{L^2(\mathbb{R}^2)} = \frac{1}{\sqrt{2 \beta}} \Vert f_S \Vert_{L^2 (\mathbb{R}^2)},\\
\Vert {\rm{e}}^{ \beta x_3} f_S (x_h) \Vert_{L^2 (\mathbb{R}^3_{-})} & \leq \Vert {\rm{e}}^{ \beta \cdot } \Vert_{L^2(-\infty, 0 )} \Vert f_h \Vert_{L^2(\mathbb{R}^2)} = \frac{1}{\sqrt{2 \beta}} \Vert f_S \Vert_{L^2 (\mathbb{R}^2)}.
\end{align*}
Therefore, the lemma follows.
\end{proof}


\section{Function Spaces in $\mathbb{R}^3_+$, $\mathbb{R}^3_-$ with Interface $\mathbb{R}^2$}\label{sect3}

Let us introduce our function spaces and study a key linear operator on the function space. Let $\kappa_A , \kappa_B, \tilde{\kappa}_S, \beta >0$. Set
\begin{equation*}
H = \{ f = { }^t (f_A , f_B , f_S ) \in L^2 ( \mathbb{R}^3_{+}) \times L^2 (\mathbb{R}^3_{-}) \times L^2 (\mathbb{R}^2) ; { \ } \Vert f \Vert_{H} < + \infty \}
\end{equation*}
with
\begin{equation*}
\Vert f \Vert_{H} = \Vert { }^t ( f_A , f_B , f_S ) \Vert_H = \bigg( \Vert f_A \Vert_{L^2 ( \mathbb{R}^3_{+})}^2 + \Vert f_B \Vert_{L^2 ( \mathbb{R}^3_{-})}^2 + \Vert f_S \Vert_{L^2 ( \mathbb{R}^2)}^2 \bigg)^{1/2}.
\end{equation*}
Moreover, we define for all $f = { }^t (f_A , f_B, f_S)$, $g = { }^t( g_A , g_B , g_S) \in H$,
\begin{align*}
\left< f , g \right>_H  & = \left< { }^t( f_A , f_B , f_S) , { }^t( g_A , g_B, g_S)  \right>_H\\
&:= \left< f_A , g_A \right>_{L^2(\mathbb{R}^3_{+})} +  \left< f_B , g_B \right>_{L^2(\mathbb{R}^3_{-})} + \left< f_S , g_S \right>_{L^2(\mathbb{R}^2)}\\
& = \int_{\mathbb{R}^3_{+}} f_A \overline{g_A} { \ }d x + \int_{\mathbb{R}^3_{-}} f_B \overline{g_B} { \ }d x + \int_{\mathbb{R}^2} f_S \overline{g_S} { \ }d x_h,
\end{align*}
where $\overline{f_\natural}$ is the complex conjugate function of $f_\natural$.
We easily see that $H$ is a Hilbert space since $L^2(\mathbb{R}^3_{+})$, $L^2(\mathbb{R}^3_{-})$, and $L^2(\mathbb{R}^2)$ are Hilbert spaces. 

Now we define a key linear operator in $H$. Set
\begin{equation*}
\begin{cases}
L f = { }^t( L_A f_A, L_B f_B , L_S f_S ),\\
D (L) = D (L_A ) \times D (L_B ) \times D (L_S),
\end{cases}
\end{equation*}
and
\begin{equation*}
\begin{cases}
L^{1/2} f = { }^t( L^{1/2}_A f_A, L^{1/2}_B f_B , L^{1/2}_S f_S ),\\
D (L^{1/2}) = D (L^{1/2}_A ) \times D (L^{1/2}_B ) \times D (L^{1/2}_S).
\end{cases}
\end{equation*}
Here $L_A$, $L_B$, $L_S$ are the operators defined by Section \ref{sect2}. From Lemma \ref{lem21}, we have the lemma.
\begin{lemma}[Fundamental properties of $L$]\label{lem31}{ \ }\\
$(\rm{i})$ The operator $- L $ generates a contraction $C_0$-semigroup on $H$.\\
$(\rm{ii})$ The operator $- L$ generates a bounded analytic semigroup on $H$.\\
$(\rm{iii})$ For $V_0 = { }^t (V_0^A , V_0^B , V_0^S) \in H$,
\begin{align*}
{\rm{e}}^{- t L } V_0 &= { }^t ({\rm{e}}^{-tL_A} V_0^A , {\rm{e}}^{-tL_B} V_0^B , {\rm{e}}^{- tL_S} V_0^S )\\
 &= { }^t (G_A * V_0^A, G_B*V_0^B , G_S*V_0^S) := G*V_0,
\end{align*}
where ${\rm{e}}^{- t L_\natural}$ is a heat semigroup and $G_\natural$ is a heat kernel $(\natural = A,B,S)$.\\
$(\rm{iv})$ The operator $L$ has maximal $L^p$-regularity.
\end{lemma}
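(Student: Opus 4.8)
The plan is to exploit the fact that $L$ is the \emph{direct sum} (block-diagonal operator) of the three component operators $L_A$, $L_B$, $L_S$ acting on the three factors of $H$, together with the observation that the $H$-norm is the $\ell^2$-aggregate of the factor norms, so that $\Vert {}^t(f_A,f_B,f_S)\Vert_H^2 = \Vert f_A\Vert_{L^2(\mathbb{R}^3_+)}^2 + \Vert f_B\Vert_{L^2(\mathbb{R}^3_-)}^2 + \Vert f_S\Vert_{L^2(\mathbb{R}^2)}^2$. Because the three problems decouple completely, each property recorded in Lemma~\ref{lem21} should transfer factor-by-factor and then recombine. The conceptual shortcut I would use first is that, since each $L_\natural$ is self-adjoint and nonnegative on its factor (Lemma~\ref{lem21}(i)), the operator $L$ is self-adjoint and nonnegative on $H$ with respect to $\langle\cdot,\cdot\rangle_H$: indeed $\langle Lf,g\rangle_H = \sum_\natural \langle L_\natural f_\natural, g_\natural\rangle = \langle f, Lg\rangle_H$ and $\langle Lf,f\rangle_H = \sum_\natural \langle L_\natural f_\natural, f_\natural\rangle \ge 0$. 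This single structural fact drives (i) and (ii).

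For (i) and (ii) I would argue as follows. Define $\mathrm{e}^{-tL}$ to act componentwise, $\mathrm{e}^{-tL}f := {}^t(\mathrm{e}^{-tL_A}f_A, \mathrm{e}^{-tL_B}f_B, \mathrm{e}^{-tL_S}f_S)$; this is forced by the block structure. Strong continuity and the semigroup law are inherited coordinatewise from Lemma~\ref{lem21}(iv), and the contraction property is immediate from the $\ell^2$-norm, since $\Vert \mathrm{e}^{-tL}f\Vert_H^2 = \sum_\natural \Vert \mathrm{e}^{-tL_\natural}f_\natural\Vert^2 \le \sum_\natural \Vert f_\natural\Vert^2 = \Vert f\Vert_H^2$ by the contractivity of each factor semigroup; this gives (i). For (ii), the resolvent acts blockwise, $(\lambda + L)^{-1}f = {}^t((\lambda+L_A)^{-1}f_A, (\lambda+L_B)^{-1}f_B, (\lambda+L_S)^{-1}f_S)$, so that $\Vert(\lambda+L)^{-1}\Vert_{H\to H} = \max_\natural \Vert(\lambda+L_\natural)^{-1}\Vert$. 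Since all three factor operators are nonnegative self-adjoint, they share a common sector $\vert\arg\lambda\vert < \pi - \varepsilon$ on which each resolvent obeys $\Vert(\lambda+L_\natural)^{-1}\Vert \le M/\vert\lambda\vert$; taking the maximum gives the same bound for $L$, so $-L$ generates a bounded analytic semigroup. Alternatively one may simply quote the spectral theorem for the nonnegative self-adjoint operator $L$ on $H$, which yields both (i) and (ii) at once.

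Assertion (iii) is then a matter of unwinding definitions: the componentwise action of $\mathrm{e}^{-tL}$ combined with Lemma~\ref{lem21}(vi), which identifies each $\mathrm{e}^{-tL_\natural}f_\natural$ with the convolution $G_\natural * f_\natural$, gives $\mathrm{e}^{-tL}V_0 = {}^t(G_A*V_0^A, G_B*V_0^B, G_S*V_0^S) =: G*V_0$. For (iv) I would invoke the result cited from \cite{Des64}: an operator whose negative generates a bounded analytic semigroup on a \emph{Hilbert} space automatically has maximal $L^p$-regularity. Since $H$ is a Hilbert space and $-L$ generates a bounded analytic semigroup on it by (ii), maximal $L^p$-regularity follows at once. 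One could equally combine the three scalar estimates of Lemma~\ref{lem21}(vii) componentwise, noting that the unique solution of $\frac{d}{dt}U + LU = \mathcal{F}$ with $U\vert_{t=0}=0$ decouples into the three factor problems and that the $L^p(0,\infty;H)$-norms recombine in $\ell^2$, yielding the estimate with constant $\max(K_A,K_B,K_S)$.

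I do not anticipate a genuine obstacle: the entire content is the recognition that $L$ is block-diagonal and that the $H$-norm is the $\ell^2$-aggregate of the factor norms, so every property passes through the direct sum. The only points requiring a little care are that the analyticity sector and the resolvent bound in (ii) must be taken \emph{uniform} across the three factors---which is automatic because all three are nonnegative self-adjoint and hence sectorial of the same angle---and, in (iv), that the maximal-regularity constant stays finite, which is handled by taking the maximum of the three factor constants.
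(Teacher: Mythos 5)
Your proposal is correct and is essentially the paper's own argument: the paper gives no written proof of Lemma \ref{lem31}, stating only that it follows from Lemma \ref{lem21}, and your block-diagonal/direct-sum reasoning (componentwise semigroup, $\ell^2$-aggregation of the factor norms, and De Simon's theorem for (iv)) is precisely the intended justification. The only negligible inaccuracy is the claim that the maximal-regularity constant is exactly $\max(K_A,K_B,K_S)$ when recombining the three scalar estimates for general $p$; a constant such as $K_A+K_B+K_S$ is what the naive recombination yields, which changes nothing.
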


We now study the key properties of the operator $L$ to construct a strong solution of our system.
\begin{lemma}\label{lem32}
Let $V_0 = { }^t (V_0^A , V_0^B , V_0^S) \in H$ and $\mathcal{F} = { }^t (\mathcal{F}_A , \mathcal{F}_B , \mathcal{F}_S) $\\ $ \in C_{loc}^\eta ((0, T); H) \cap L^2(0,T ; H)$ for some $0 < \eta \leq 1/2$ and $T \in (0, \infty )$. Then system
\begin{equation}\label{eq31}
\begin{cases}
\frac{d}{dt}V + L V = \mathcal{F} \text{ on } (0, T),\\
V \vert_{t=0} = V_0,
\end{cases}
\end{equation}
admits a unique strong solution $V$ in
\begin{equation}\label{eq32}
C([0, T) ; H) \cap C ((0, T); D (L) ) \cap C^1 ((0,T); H),
\end{equation}
and $V$ is written by 
\begin{equation}\label{eq33}
V (t) = {\rm{e}}^{- t L} V_0 + \int_0^t {\rm{e}}^{- (t - \tau ) L} \mathcal{F} (\tau ) { \ }d \tau \text{ for } 0 < t < T.
\end{equation}
Moreover, the following three assertions hold:\\
\noindent $(\rm{i})$
\begin{equation}\label{eq34}
\sup_{0 \leq t <T}\Vert V (t) \Vert_H \leq \Vert V_0 \Vert_H +T^{1/2} \Vert \mathcal{F} \Vert_{L^2(0,T;H)}.
\end{equation}
\noindent $(\rm{ii})$ Assume in addition that $V_0 \in D ( L^{1/2})$. Then
\begin{equation*}
V,  dV/{dt}, L V \in C^{\eta/2}_{loc} ((0, T); H).
\end{equation*}
\noindent $(\rm{iii})$ Assume in addition that $V_0 \in D ( L^{1/2})$. Then
\begin{multline}\label{eq35}
\Vert d V/ {dt} \Vert_{L^2(0, T ;H)} + \Vert L V \Vert_{L^2(0, T ;H)} \leq 2 \Vert L^{1/2} V_0 \Vert_{H} + K_A \| \mathcal{F}_A \|_{L^2 (0, T ; L^2(\mathbb{R}^3_{+}))}\\
 + K_B \| \mathcal{F}_B \|_{L^2 (0,T; L^2(\mathbb{R}^3_{-}))} + K_S \| \mathcal{F}_S \|_{L^2 (0,T; L^2(\mathbb{R}^2))}.
\end{multline}
Here $K_A$, $K_B$, $K_S$ are the three positive constants appearing in $(\rm{vii})$ of Lemma \ref{lem21}.
\end{lemma}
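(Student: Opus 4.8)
The plan is to deduce the structural part of the statement from the abstract theory and then establish the three estimates separately. First I would invoke Lemma \ref{lem23} with $\mathcal{H} = H$ and $\mathcal{L} = L$: since $-L$ generates a bounded analytic semigroup on the Hilbert space $H$ by Lemma \ref{lem31}(ii), all hypotheses hold verbatim, so \eqref{eq31} has a unique strong solution in the class \eqref{eq32}, it is given by the variation-of-constants formula \eqref{eq33}, and the H\"older regularity \eqref{eq23} furnishes assertion (ii) once $V_0 \in D(L^{1/2})$.

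For assertion (i) I would read off from \eqref{eq33} that, because $-L$ generates a \emph{contraction} $C_0$-semigroup (Lemma \ref{lem31}(i)) so that $\Vert e^{-sL}\Vert_{H \to H} \le 1$,
\begin{equation*}
\Vert V(t)\Vert_H \le \Vert e^{-tL}V_0\Vert_H + \int_0^t \Vert e^{-(t-\tau)L}\mathcal{F}(\tau)\Vert_H\,d\tau \le \Vert V_0\Vert_H + \int_0^t \Vert \mathcal{F}(\tau)\Vert_H\,d\tau,
\end{equation*}
and then bound the integral by $t^{1/2}\Vert\mathcal{F}\Vert_{L^2(0,T;H)} \le T^{1/2}\Vert\mathcal{F}\Vert_{L^2(0,T;H)}$ via Cauchy--Schwarz; taking the supremum over $t$ gives \eqref{eq34}.

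The core is assertion (iii), for which I would split $V = V_1 + V_2$ with $V_1(t) = e^{-tL}V_0$ and $V_2(t) = \int_0^t e^{-(t-\tau)L}\mathcal{F}(\tau)\,d\tau$, and estimate the two parts independently. For $V_2$, which solves \eqref{eq31} with zero initial datum and decouples into the three scalar problems, I would extend each $\mathcal{F}_\natural$ by zero to $(0,\infty)$, apply the maximal $L^p$-regularity of Lemma \ref{lem21}(vii) with $p=2$ on $(0,\infty)$, and restrict to $(0,T)$; the zero extension preserves the forcing norms while restriction can only decrease the solution norms, and summing the three scalar estimates (using $\ell^2 \le \ell^1$ over the components) yields $\Vert dV_2/dt\Vert_{L^2(0,T;H)} + \Vert LV_2\Vert_{L^2(0,T;H)} \le K_A\Vert\mathcal{F}_A\Vert_{L^2(0,T;L^2(\mathbb{R}^3_+))} + K_B\Vert\mathcal{F}_B\Vert_{L^2(0,T;L^2(\mathbb{R}^3_-))} + K_S\Vert\mathcal{F}_S\Vert_{L^2(0,T;L^2(\mathbb{R}^2))}$. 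For $V_1$ the identity $dV_1/dt = -LV_1$ gives $\Vert dV_1/dt\Vert_{L^2(0,T;H)} + \Vert LV_1\Vert_{L^2(0,T;H)} = 2\Vert LV_1\Vert_{L^2(0,T;H)}$, reducing matters to estimating $\Vert Le^{-tL}V_0\Vert_{L^2(0,T;H)}$.

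The main obstacle is this last bound, which I expect to be the only genuinely nonroutine point, and I would resolve it by the spectral theorem. Each $L_\natural$ is self-adjoint and nonnegative by Lemma \ref{lem21}(i), hence so is $L$ on $H$; writing its spectral resolution as $L = \int_0^\infty \lambda\,dE_\lambda$ and using $\int_0^T e^{-2t\lambda}\,dt \le 1/(2\lambda)$ gives
\begin{equation*}
\int_0^T \Vert Le^{-tL}V_0\Vert_H^2\,dt = \int_0^\infty \lambda^2 \Big( \int_0^T e^{-2t\lambda}\,dt \Big)\,d\Vert E_\lambda V_0\Vert_H^2 \le \frac{1}{2}\int_0^\infty \lambda\,d\Vert E_\lambda V_0\Vert_H^2 = \frac{1}{2}\Vert L^{1/2}V_0\Vert_H^2 .
\end{equation*}
Thus the homogeneous contribution is $2\Vert LV_1\Vert_{L^2(0,T;H)} \le 2^{1/2}\Vert L^{1/2}V_0\Vert_H \le 2\Vert L^{1/2}V_0\Vert_H$, and adding this to the $V_2$-estimate by the triangle inequality produces exactly \eqref{eq35}.
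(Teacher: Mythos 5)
Your proposal is correct and follows essentially the same route as the paper: Lemma \ref{lem23} for existence, uniqueness, the representation formula and assertion (ii); the contraction property plus Cauchy--Schwarz for (i); and for (iii) the same splitting into the homogeneous part ${\rm e}^{-tL}V_0$ and the Duhamel term, the latter handled by the maximal $L^2$-regularity of Lemma \ref{lem21} (vii) componentwise. The only variation is that you bound $\int_0^T \Vert L {\rm e}^{-tL} V_0 \Vert_H^2 \, dt$ by the spectral theorem, whereas the paper derives the same bound from the semigroup energy identity combined with the commutation $L{\rm e}^{-tL}V_0 = L^{1/2}{\rm e}^{-tL}L^{1/2}V_0$; both arguments rest on the self-adjointness of $L$ and yield the required $2\Vert L^{1/2}V_0\Vert_H$.
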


\begin{proof}[Proof of Lemma \ref{lem32}]
Fix $V_0 = { }^t (V_0^A , V_0^B , V_0^S) \in H$ and $\mathcal{F} = { }^t (\mathcal{F}_A , \mathcal{F}_B , \mathcal{F}_S) \in L^2(0,T;H) \cap C_{loc}^\eta ((0, T); H)$ for some $0 < \eta \leq 1/2$ and $T \in (0, \infty )$. Since $-L$ generates an analytic semigroup on $H$, we see that \eqref{eq33} is a unique strong solution of system \eqref{eq31} and in \eqref{eq32}. Since ${\rm{e}}^{- t L}$ is a contraction $C_0$-semigroup on $H$, we apply the Cauchy-Schwarz inequality to derive \eqref{eq34}. From Lemma \ref{lem23}, we see that $(\rm{ii})$ holds. 

We now show $(\rm{iii})$. Assume that $V_0 \in D (L^{1/2})$. Set $W = W (t) =$\\ $ { }^t (W_A,W_B,W_S) = {\rm{e}}^{- t L} V_0$, and $U = U(t) = { }^t (U_A,U_B,U_S) = V (t) - W (t)$. Then we see that

\begin{equation}\label{eq36}
\begin{cases}
\frac{d}{dt}W + L W = 0 \text{ on } (0, T),\\
W \vert_{t=0} = V_0,
\end{cases}
\end{equation}
\begin{equation}\label{eq37}
\begin{cases}
\frac{d}{dt}U + L U = \mathcal{F} \text{ on } (0, T),\\
U \vert_{t=0} = 0.
\end{cases}
\end{equation}
Applying the maximal $L^2$-regularity($(\rm{vii})$ of Lemma \ref{lem21}) into \eqref{eq37}, we see that
\begin{multline}\label{eq38}
\Vert d U/ {dt} \Vert_{L^2(0, T ;H)} + \Vert L U \Vert_{L^2(0, T ;H)} \leq K_A \| \mathcal{F}_A \|_{L^2 (0, T ; L^2(\mathbb{R}^3_{+}))}\\
 + K_B \| \mathcal{F}_B \|_{L^2 (0,T; L^2(\mathbb{R}^3_{-}))} + K_S \| \mathcal{F}_S \|_{L^2 (0,T; L^2(\mathbb{R}^2))}.
\end{multline}

Next, we prove that for $0 < t < \infty$
\begin{equation}\label{eq39}
L {\rm{e}}^{- t L} V_0 = L^{1/2} {\rm{e}}^{- t L}L^{1/2} V_0 \text{ in }H.
\end{equation}
By definition, we check that
\begin{align*}
L {\rm{e}}^{- t L}V_0 & = { }^t (L_A {\rm{e}}^{- t L_A} V_0^A , L_B {\rm{e}}^{- t L_B} V_0^B, L_S {\rm{e}}^{- t L_S }V_0^S )\\
& = { }^t (L_A^{1/2} {\rm{e}}^{- t L_A} L_A^{1/2} V_0^A , L_B^{1/2} {\rm{e}}^{- t L_B} L_B^{1/2} V_0^B , L_S^{1/2} {\rm{e}}^{- t L_S} L_S^{1/2} V_0^S)\\
 &= L^{1/2} {\rm{e}}^{- t L}L^{1/2} V_0.
\end{align*}
Thus, we have \eqref{eq39}.

From Lemma \ref{lem21}, we observe that
\begin{multline*}
\frac{1}{2} \frac{d}{dt} \Vert W (t) \Vert_H^2 = \left< d W /{dt} , W \right>_H = \left< - L W , W \right>_H\\
 = \left< -L_A W_A, W_A \right>_{L^2 (\mathbb{R}^3_+)} + \left< - L_B W_B , W_B \right>_{L^2(\mathbb{R}^3_-)} + \left< - L_S W_S,W_S \right>_{L^2(\mathbb{R}^2)}\\
 = - \Vert L_A^{1/2} W_A \Vert_{L^2(\mathbb{R}^3_+)}^2 - \Vert L_B^{1/2} W_B \Vert_{L^2(\mathbb{R}^3_-)}^2 - \Vert L_S^{1/2} W_S \Vert_{L^2(\mathbb{R}^2)}^2 = - \Vert L^{1/2} W \Vert_H^2.
\end{multline*}
Integrating with respect to time $t$, we have
\begin{equation*}
\Vert W (t) \Vert_H^2 + \int_0^t \Vert L^{1/2} W (\tau) \Vert_H^2 { \ }d \tau = \Vert W(0) \Vert_H^2.
\end{equation*}
This shows that
\begin{equation*}
\Vert {\rm{e}}^{- t L} V_0 \Vert_H^2 + \int_0^t \Vert L^{1/2} {\rm{e}}^{- \tau L} V_0 \Vert_H^2 { \ }d \tau = \Vert V_0 \Vert_H^2.
\end{equation*}
Since $L^{1/2} V_0 \in H$, we have
\begin{equation}\label{Eq3010}
\Vert {\rm{e}}^{- t L} L^{1/2}V_0 \Vert_H^2 + \int_0^t \Vert L^{1/2} {\rm{e}}^{- \tau L} L^{1/2} V_0 \Vert_H^2 { \ }d \tau = \Vert L^{1/2} V_0 \Vert_H^2.
\end{equation}
By \eqref{eq39} and \eqref{Eq3010}, we see that
\begin{equation*}
\int_0^t \Vert L {\rm{e}}^{- \tau L} V_0 \Vert_H^2 { \ }d \tau = \int_0^t \Vert L^{1/2} {\rm{e}}^{- \tau L} L^{1/2} V_0 \Vert_H^2 { \ }d \tau \leq \Vert L^{1/2} V_0 \Vert_H^2.
\end{equation*}
Thus, we have
\begin{equation}\label{Eq3011}
\Vert d W/ {dt} \Vert_{L^2(0, T ;H)} + \Vert L W \Vert_{L^2(0, T ;H)} \leq 2 \Vert L^{1/2} V_0 \Vert_H.
\end{equation}
Combining \eqref{eq38} and \eqref{Eq3011}, we have \eqref{eq35}. Therefore, the lemma follows.
  \end{proof}


\section{Existence of Strong Solutions}\label{sect4}
In this section, we show the existence of strong solutions to system \eqref{eq14}. We apply the maximal $L^2$-regularity for system \eqref{eq31}, and nice properties of both heat semigroups and kernels to construct local-in-time and global-in-time strong solutions to our system.

Let $\kappa_A, \kappa_B , \tilde{\kappa}_S, \alpha_S , \beta >0 $, $T \in (0,\infty)$, and let $L_A$, $L_B$, $L_S$, and $L$ be the three operators defined by Section \ref{sect2}, and the operator defined by Section \ref{sect3}.

Let us introduce our function spaces. Set
\begin{align*}
&L^2((0,T) \times \mathbb{R}^3_{+,-,0}) := L^2((0,T) \times \mathbb{R}^3_+) \times L^2 ((0,T) \times \mathbb{R}^3_-) \times L^2((0,T) \times \mathbb{R}^2),\\
&X_T := \{ \varphi = { }^t(\varphi_A , \varphi_B , \varphi_S ) \in C( [0,T); H ) ; { \ } \Vert \varphi \Vert_{X_T} < + \infty \}
\end{align*}
with
\begin{equation*}
\Vert \varphi \Vert_{X_T} := \sup_{0 \leq t <T} \Vert \varphi \Vert_{H} + \Vert d \varphi/{dt} \Vert_{L^2 (0,T; H)} + \Vert L \varphi \Vert_{L^2(0,T; H)}.
\end{equation*}
Moreover,
\begin{align*}
&L^2_{loc}(0,\infty ; D (L)) := \{ \varphi; { \ }\varphi \in L^2(0,T;D(L)) \text{ for each fixed }T>0 \},\\
&W^{1,2}_{loc}(0,\infty ; H) := \{ \varphi; { \ }\varphi \in L^2(0,T; H) \text{ for each fixed }T>0 \},\\
&L^2_{loc} ( \mathbb{R}_+ \times \mathbb{R}^3_{+,-,0}) := \{ \varphi ;{ \ } \varphi \in L^2((0,T) \times \mathbb{R}^3_{+,-,0}) \text{ for each fixed }T >0  \}.
\end{align*}

Applying the linear operator $L$ into system \eqref{eq14}, we have
\begin{equation}\label{eq41}
\begin{cases}
\frac{d}{d t} v + L v = F (v) { \ }\text{ on }(0,T),\\
v \vert_{t=0} = v_0,
\end{cases}
\end{equation}
where $v = { }^t (v_A , v_B , v_S) \in X_T$ and $v_0 = { }^t(v_0^A , v_0^B , v_0^S) \in H$. Here
\begin{equation*} F (v) = 
\begin{pmatrix}
F_1 (v)\\
F_2 (v)\\
F_3 (v)
\end{pmatrix} := \begin{pmatrix}
 - ( d v_S /{dt} ) {\rm{e}}^{- \beta x_3} + (\kappa_A \Delta_h v_S ) {\rm{e}}^{- \beta x_3} + \beta^2 \kappa_A v_S {\rm{e}}^{- \beta x_3}\\
 - ( d v_S / {dt} ) {\rm{e}}^{\beta x_3} + ( \kappa_B \Delta_h v_S ) {\rm{e}}^{\beta x_3} + \beta^2 \kappa_B v_S {\rm{e}}^{ \beta x_3}\\
 \frac{\kappa_A}{\alpha_S} \gamma_{+}[ \partial_3 v_A] - \frac{\kappa_B}{\alpha_S} \gamma_{-}[ \partial_3 v_B] - \frac{\beta}{\alpha_S} (\kappa_A + \kappa_B) v_S
\end{pmatrix}.
\end{equation*}

The aim of this section is to prove the following theorem.
\begin{theorem}[Existence of a global-in-time strong solution]\label{thm41}
There are two positive constants $\alpha_0 = \alpha_0 ( \tilde{\kappa}_S)$ and $\beta_0 = \beta_0 (\kappa_A , \kappa_B , \tilde{\kappa}_S )$ such that if $\alpha_S > \alpha_0$ and $\beta > \beta_0$ then for each $v_0 = { }^t (v_0^A , v_0^B , v_0^S) \in D (L^{1/2})$, system \eqref{eq41} admits a unique global-in-time strong solution $v$ in 
\begin{equation*}
C ([0,\infty) ; H ) \cap L^2_{loc} (0,\infty; D(L)) \cap W_{loc}^{1,2} (0,\infty ; H) \cap L^2_{loc} ( \mathbb{R}_+ \times \mathbb{R}^3_{+,-,0}),
\end{equation*}
satisfying
\begin{equation*}
\lim_{t \to 0 + 0} v (t) = v_0 \text{ in }H. 
\end{equation*}
\end{theorem}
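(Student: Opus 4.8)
The plan is to exploit that the map $v \mapsto F(v)$ in \eqref{eq41} is affine-linear, and to solve \eqref{eq41} by a fixed-point argument in $X_T$ built on the linear theory of Lemma \ref{lem32}. For $v \in X_T$ I define $\Phi(v) := w$ to be the solution of $\frac{d}{dt}w + Lw = F(v)$ on $(0,T)$ with $w|_{t=0} = v_0$, furnished by the maximal $L^2$-regularity (Lemma \ref{lem21}(vii)) for the Duhamel part together with $e^{-tL}v_0$ for the initial datum; note that the estimates \eqref{eq34} and \eqref{eq35} proved in Lemma \ref{lem32} are maximal-regularity estimates and, as their derivation shows, require only $F(v)\in L^2(0,T;H)$, so I need not verify a Hölder hypothesis to run the iteration (Lemma \ref{lem32}(ii) can be invoked a posteriori for extra regularity). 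Since $F$ is linear, $\Phi$ is affine with linear part $\Phi_0$ (the solution operator with $v_0$ replaced by $0$), so it suffices to prove $\|\Phi_0\|_{X_T\to X_T}\le\lambda<1$; then $v=(\mathrm{Id}-\Phi_0)^{-1}\big(e^{-tL}v_0\big)$ is the unique fixed point, hence the unique solution on $[0,T)$.

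The heart of the matter is to bound $F(v)$ by a controllably small multiple of $\|v\|_{X_T}$. For the first two components I would use Lemma \ref{lem24} to absorb the exponential weights, together with $\|\Delta_h v_S\|=\tilde\kappa_S^{-1}\|L_S v_S\|$, obtaining
\begin{align*}
\|F_1(v)\|_{L^2(0,T;L^2(\mathbb{R}^3_+))}\le \frac{1}{\sqrt{2\beta}}\Big(\|dv_S/dt\|_{L^2(0,T;L^2)}+\frac{\kappa_A}{\tilde\kappa_S}\|L_S v_S\|_{L^2(0,T;L^2)}+\beta^2\kappa_A\,T^{1/2}\!\!\sup_{0\le t<T}\|v_S\|_{L^2}\Big),
\end{align*}
and likewise for $F_2$. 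For the third component I would use the trace estimate of Lemma \ref{lem21}(iii), namely $\|\gamma_+[\partial_3 v_A]\|\le C(\kappa_A^{-1}\|L_A v_A\|+\|v_A\|)$, to get
\begin{align*}
\|F_3(v)\|_{L^2(0,T;L^2(\mathbb{R}^2))}\le \frac{C}{\alpha_S}\big(\|L_A v_A\|_{L^2(0,T;L^2)}+\|L_B v_B\|_{L^2(0,T;L^2)}\big)+C(\beta,\alpha_S,\kappa_A,\kappa_B)\,T^{1/2}\|v\|_{X_T}.
\end{align*}
In each estimate the genuinely top-order pieces ($dv_S/dt$, $L_S v_S$, $L_A v_A$, $L_B v_B$) carry the small prefactors $(2\beta)^{-1/2}$ or $C\alpha_S^{-1}$, while every remaining piece carries a factor $T^{1/2}$. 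Feeding these into \eqref{eq34}--\eqref{eq35} with $v_0=0$, the maximal-regularity constants $K_A(\kappa_A),K_B(\kappa_B),K_S(\tilde\kappa_S)$ multiply exactly the top-order pieces. Hence I first choose $\beta_0=\beta_0(\kappa_A,\kappa_B,\tilde\kappa_S)$ and $\alpha_0=\alpha_0(\tilde\kappa_S)$ large enough that the top-order contribution is $\le\tfrac14$ (this is where the stated dependences come from, since $C$ is absolute in Lemma \ref{lem21}(iii)), and then choose the local time $T_0$, depending only on the fixed constants, so small that the $T^{1/2}$ contribution is $\le\tfrac14$. This gives $\|\Phi_0\|_{X_{T_0}\to X_{T_0}}\le\tfrac12$ and a unique local solution $v\in X_{T_0}$.

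Because the equation is linear and $T_0$ is independent of the data, global existence follows by iteration: by the smoothing of the bounded analytic semigroup the solution satisfies $v(t)\in D(L)\subset D(L^{1/2})$ for $t>0$, so I restart from $v(T_0)$ and advance by the same step $T_0$ indefinitely, patching the pieces by uniqueness into a solution on $[0,\infty)$ in the class $C([0,\infty);H)\cap L^2_{loc}(0,\infty;D(L))\cap W^{1,2}_{loc}(0,\infty;H)$. Membership in $L^2_{loc}(\mathbb{R}_+\times\mathbb{R}^3_{+,-,0})$ is automatic from $C([0,T];H)\subset L^2(0,T;H)$ via Fubini, and $\lim_{t\to 0+}v(t)=v_0$ in $H$ is built into $v\in C([0,T);H)$ with $v(0)=v_0$. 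Uniqueness on each interval is immediate from the contraction, and propagates to $[0,\infty)$.

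The step I expect to be the main obstacle is the bookkeeping of the previous two paragraphs: one must verify that the small prefactors $(2\beta)^{-1/2}$ and $\alpha_S^{-1}$ attach \emph{precisely} to the top-order terms, so that they beat the fixed maximal-regularity constants $K_A,K_B,K_S$ (which do not improve with $\beta,\alpha_S$), while every term lacking such a prefactor is genuinely lower order and absorbable by $T^{1/2}$. This is exactly the purpose of the $\mathrm{e}^{\mp\beta x_3}$ lifting used to derive \eqref{eq14} and of the precise dependences $\alpha_0(\tilde\kappa_S)$, $\beta_0(\kappa_A,\kappa_B,\tilde\kappa_S)$ in the statement; if a top-order term such as a $\beta^2$-factor failed to come with a compensating $T^{1/2}$, the uniform-in-data local time would be lost and the passage to a global solution would break down.
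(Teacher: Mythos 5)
Your proposal is correct and follows essentially the same route as the paper: a contraction argument in $X_T$ driven by the maximal $L^2$-regularity estimates of Lemma \ref{lem32}, with the top-order terms of $F$ absorbed via the prefactors $(2\beta)^{-1/2}$ and $\alpha_S^{-1}$ (exactly the choice of $\alpha_0=8C_*K_S$ and $\beta_0$ in Remark \ref{rem44}) and the remaining terms absorbed by $T^{1/2}$, followed by data-independent time-stepping to reach $[0,\infty)$. The only substantive difference is that you dispense with the H\"older-continuity bookkeeping that the paper carries through its Picard iteration (the exponents $(1/2)^{m+1}$ in Proposition \ref{prop46}); this is legitimate for the solution class asserted in Theorem \ref{thm41}, since that bookkeeping is only needed for the extra classical regularity $C((0,T);D(L))\cap C^1((0,T);H)$, which the theorem does not claim.
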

\noindent In order to prove Theorem \ref{thm41}, we derive some properties of $F (v)$ when $v \in X_T$ in subsection \ref{subsec41}. In subsection \ref{subsec42}, we study the uniqueness of the strong solutions to system \eqref{eq41}. In subsection \ref{subsec43}, we construct a local-in-time strong solution to our system. In subsection \ref{subsec44}, we show the existence of a global-in-time strong solution to \eqref{eq41}.

\subsection{Properties of $F(\cdot)$}\label{subsec41}
Let us study some properties of $F (\cdot)$.
\begin{lemma}[Properties of $F(\cdot)$]\label{lem42}{ \ }\\
$(\rm{i})$ Assume that $\varphi \in X_T$. Then $F (\varphi) \in L^2(0,T;H)$. Moreover, there is $C_*>0$ independent of $(\kappa_A , \kappa_B , \tilde{\kappa}_S, \alpha_S, \beta ,T, \varphi )$ such that
\begin{align}
\Vert F_1 (\varphi) \Vert_{L^2(0,T; L^2(\mathbb{R}^3_{+}))} & \leq C_* \bigg( \frac{1}{ \sqrt{\beta}} + \frac{\kappa_A}{\tilde{\kappa}_S \sqrt{\beta}} + T^{1/2} \kappa_A \frac{\beta^2}{\sqrt{\beta}} \bigg) \Vert \varphi \Vert_{X_T},\label{eq42}\\
\Vert F_2 (\varphi) \Vert_{L^2(0,T; L^2(\mathbb{R}^3_{-}))} & \leq C_* \bigg( \frac{1}{ \sqrt{\beta}} + \frac{\kappa_B}{\tilde{\kappa}_S \sqrt{\beta}} + T^{1/2} \kappa_B \frac{\beta^2}{\sqrt{\beta}} \bigg) \Vert \varphi \Vert_{X_T} ,\label{eq43}\\
\Vert F_3 (\varphi) \Vert_{L^2(0,T; L^2(\mathbb{R}^2))} & \leq C_* \bigg( \frac{1}{\alpha_S} + T^{1/2} \frac{(\kappa_A + \kappa_B)(\beta + 1)}{\alpha_S} \bigg)  \Vert \varphi \Vert_{X_T}.\label{eq44}
\end{align}
\noindent $(\rm{ii})$ Let $0 < \eta \leq 1/2$. Assume that $\varphi \in X_T$ and that
\begin{equation*}
\varphi , L \varphi, d\varphi/{dt} \in C^{\eta}_{loc} ((0, T) ; H).
\end{equation*}
Then
\begin{equation*}
F (\varphi) \in C^{\eta}_{loc} ((0, T) ; H).
\end{equation*}
\end{lemma}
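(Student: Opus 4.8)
The plan is to treat $F$ as a linear map of $\varphi$ and to estimate each component $F_1,F_2,F_3$ by the triangle inequality, one summand at a time, using Lemma \ref{lem24} to pass from $L^2(\mathbb{R}^3_\pm)$-norms of exponentially weighted functions to $L^2(\mathbb{R}^2)$-norms. For assertion (i), consider $F_1(\varphi)$. Each of its three summands has the form (a fixed function of $x_3$) times (a function on $\mathbb{R}^2$), so Lemma \ref{lem24} supplies a factor $(2\beta)^{-1/2}$. The first term $-(d\varphi_S/dt){\rm{e}}^{-\beta x_3}$ is controlled by $(2\beta)^{-1/2}\Vert d\varphi_S/dt\Vert_{L^2(\mathbb{R}^2)}$, which integrates in time to $(2\beta)^{-1/2}\Vert d\varphi/dt\Vert_{L^2(0,T;H)}$ and yields the $\beta^{-1/2}$ term. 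For the second term I would use the exact identity $\Delta_h\varphi_S=-\tilde{\kappa}_S^{-1}L_S\varphi_S$, so that $\Vert(\kappa_A\Delta_h\varphi_S){\rm{e}}^{-\beta x_3}\Vert_{L^2(\mathbb{R}^3_+)}\le (2\beta)^{-1/2}\kappa_A\tilde{\kappa}_S^{-1}\Vert L_S\varphi_S\Vert_{L^2(\mathbb{R}^2)}$; integrating and bounding $\Vert L_S\varphi_S\Vert_{L^2(0,T;L^2)}\le\Vert L\varphi\Vert_{L^2(0,T;H)}$ gives the $\kappa_A(\tilde{\kappa}_S\sqrt{\beta})^{-1}$ term. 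The third term $\beta^2\kappa_A\varphi_S{\rm{e}}^{-\beta x_3}$ gives $(2\beta)^{-1/2}\beta^2\kappa_A\Vert\varphi_S\Vert_{L^2(\mathbb{R}^2)}$, and since $\Vert\varphi_S\Vert_{L^2(0,T;L^2)}\le T^{1/2}\sup_{0\le t<T}\Vert\varphi\Vert_H$, this is the $T^{1/2}\kappa_A\beta^2/\sqrt{\beta}$ term. Summing yields \eqref{eq42}, and \eqref{eq43} follows by the mirror argument on $\mathbb{R}^3_-$ with $\kappa_B$.

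For $F_3(\varphi)$ the new ingredient is the trace term, and this is where the elliptic-regularity estimate (iii) of Lemma \ref{lem21} is essential: it bounds $\Vert\gamma_+[\partial_3\varphi_A]\Vert_{L^2(\mathbb{R}^2)}$ by $C(\kappa_A^{-1}\Vert L_A\varphi_A\Vert_{L^2}+\Vert\varphi_A\Vert_{L^2})$ with $C$ independent of the $\kappa$'s. Thus $\frac{\kappa_A}{\alpha_S}\Vert\gamma_+[\partial_3\varphi_A]\Vert_{L^2(\mathbb{R}^2)}\le\frac{C}{\alpha_S}\Vert L_A\varphi_A\Vert_{L^2}+\frac{C\kappa_A}{\alpha_S}\Vert\varphi_A\Vert_{L^2}$; the first piece integrates to the $\alpha_S^{-1}$ term and the second to a $T^{1/2}\kappa_A\alpha_S^{-1}$ term. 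The $\gamma_-$ term is symmetric, and the remaining term $-\frac{\beta}{\alpha_S}(\kappa_A+\kappa_B)\varphi_S$ contributes $T^{1/2}\beta(\kappa_A+\kappa_B)\alpha_S^{-1}\sup_{0\le t<T}\Vert\varphi\Vert_H$. Collecting these gives \eqref{eq44}.

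For assertion (ii), I would exploit that $F$ is built from bounded linear operators applied to the three quantities $\varphi$, $L\varphi$, $d\varphi/dt$, all of which lie in $C^\eta_{loc}((0,T);H)$ by hypothesis. Since a bounded linear operator maps an $\eta$-Hölder continuous curve to an $\eta$-Hölder continuous curve, it suffices to identify each building block as such an operator. The multiplication $g\mapsto{\rm{e}}^{\mp\beta x_3}g(x_h)$ is bounded from $L^2(\mathbb{R}^2)$ into $L^2(\mathbb{R}^3_\pm)$ by Lemma \ref{lem24}; the identity $\Delta_h=-\tilde{\kappa}_S^{-1}L_S$ shows $\Delta_h\varphi_S$ inherits Hölder continuity from $L\varphi$; and for the trace term I would apply estimate (iii) of Lemma \ref{lem21} to the difference $\varphi_A(t)-\varphi_A(s)$, bounding $\Vert\gamma_+[\partial_3\varphi_A(t)]-\gamma_+[\partial_3\varphi_A(s)]\Vert_{L^2(\mathbb{R}^2)}$ by $C(\kappa_A^{-1}\Vert L_A\varphi_A(t)-L_A\varphi_A(s)\Vert_{L^2}+\Vert\varphi_A(t)-\varphi_A(s)\Vert_{L^2})\le C'|t-s|^\eta$. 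Assembling the three components gives $F(\varphi)\in C^\eta_{loc}((0,T);H)$.

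The hard part throughout is the trace term in $F_3$: the operators $\gamma_\pm\partial_3$ are not bounded on $W^{1,2}$ in a directly usable way, so everything hinges on controlling them through the graph norm of $L_A$ and $L_B$ via the elliptic-regularity bound of Lemma \ref{lem21}(iii). The $\kappa$-independence of the constant there is also what produces the explicit parameter dependence recorded in \eqref{eq44}. All the other estimates are routine applications of Lemma \ref{lem24} together with the elementary inequality $\Vert\psi\Vert_{L^2(0,T;L^2)}\le T^{1/2}\sup_{t}\Vert\psi\Vert_{L^2}$.
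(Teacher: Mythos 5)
Your proposal is correct and follows essentially the same route as the paper: Lemma \ref{lem24} supplies the $(2\beta)^{-1/2}$ factor for the exponentially weighted terms, the identity $\Delta_h = -\tilde{\kappa}_S^{-1}L_S$ and the elementary bound $\Vert\psi\Vert_{L^2(0,T;L^2)}\le T^{1/2}\sup_t\Vert\psi\Vert_{L^2}$ produce \eqref{eq42}--\eqref{eq43}, and the $\kappa$-independent elliptic-regularity estimate of Lemma \ref{lem21}(iii) handles the trace terms in \eqref{eq44}. Part (ii) is likewise obtained in the paper exactly as you describe, by running the same estimates on differences $\varphi(t_2)-\varphi(t_1)$ and using linearity of $F$.
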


\begin{proof}[Proof of Lemma \ref{lem42}]
We first show $(\rm{i})$. By Lemma \ref{lem24}, we check that
\begin{multline*}
\Vert F_1 (\varphi) \Vert_{L^2 ( \mathbb{R}^3_{+})} \\
\leq \frac{1}{\sqrt{2 \beta}} \Vert  d \varphi_S /{dt} \Vert_{L^2(\mathbb{R}^2)} + \frac{\kappa_A}{ \tilde{\kappa}_S \sqrt{2 \beta}} \Vert L_S \varphi_S \Vert_{L^2 (\mathbb{R}^2)} + \frac{\beta^2 \kappa_A}{\sqrt{2 \beta}}   \Vert \varphi_S \Vert_{L^2 ( \mathbb{R}^2)}.
\end{multline*}
This gives
\begin{multline*}
\Vert F_1 (\varphi) \Vert_{L^2(0,T; L^2 ( \mathbb{R}^3_{+}) ) } \leq C\bigg( \frac{1}{\sqrt{2 \beta}} \Vert  d \varphi_S /{dt} \Vert_{L^2(0,T;L^2(\mathbb{R}^2))}\\ + \frac{ \kappa_A}{\tilde{\kappa}_S \sqrt{2 \beta}} \Vert L_S \varphi_S \Vert_{L^2(0,T; L^2 (\mathbb{R}^2))} + \frac{\beta^2 \kappa_A}{\sqrt{2 \beta}} \Vert \varphi_S \Vert_{L^2(0,T;L^2 ( \mathbb{R}^2))} \bigg)\\
\leq C \bigg( \frac{1}{ \sqrt{\beta}} + \frac{\kappa_A}{ \tilde{\kappa}_S \sqrt{\beta}} + T^{1/2} \kappa_A \frac{\beta^2}{\sqrt{\beta}} \bigg) \Vert \varphi \Vert_{X_T}.
\end{multline*}
Here we used the fact that
\begin{equation*}
\Vert \varphi_S \Vert_{L^2(0,T;L^2(\mathbb{R}^2))} \leq  \sup_{0 \leq t < T } \Vert \varphi_S \Vert_{L^2(\mathbb{R}^2)} T^{1/2} \leq T^{1/2} \Vert \varphi \Vert_{X_T}.
\end{equation*}
Therefore, we see \eqref{eq42}. Similarly, we have \eqref{eq43}. By assertion $(\rm{iii})$ of Lemma \ref{lem21}, we check that
\begin{multline*}
\Vert F_3 (\varphi) \Vert_{L^2 ( \mathbb{R}^2)} \leq \frac{C}{\alpha_S} \Vert L_A \varphi_A \Vert_{L^2(\mathbb{R}^3_{+})} + \frac{C\kappa_A}{\alpha_S} \Vert \varphi_A \Vert_{L^2(\mathbb{R}^3_{+})} + \frac{C}{\alpha_S} \Vert L_B \varphi_B \Vert_{L^2 (\mathbb{R}^3_{-})}\\
 + \frac{C \kappa_B}{\alpha_S}  \Vert \varphi_B \Vert_{L^2 (\mathbb{R}^3_{-})} + \frac{\beta}{\alpha_S} (\kappa_A + \kappa_B ) \Vert \varphi_S \Vert_{L^2 ( \mathbb{R}^2)}.
\end{multline*}
This gives \eqref{eq44}. From \eqref{eq42}-\eqref{eq44}, we find that $F (\varphi) \in L^2(0,T;H)$ when $\varphi \in X_T$. Therefore, we see $(\rm{i})$.

Next, we show $(\rm{ii})$. Fix $0 < \varepsilon  <T$. By assumption, there is $C >0$ such that for all $t_1,t_2(\varepsilon \leq t_1 \leq t_2 < T)$
\begin{multline}\label{eq45}
\Vert \varphi (t_2) - \varphi (t_1) \Vert_{H} + \Vert L \varphi (t_2) - L \varphi (t_1) \Vert_{H} + \Vert {d \varphi}/{dt} (t_2) - {d\varphi}/{dt} (t_1) \Vert_{H}\\
 \leq C (t_2 - t_1)^{\eta} .
\end{multline}
Fix $t_1,t_2$ such that $\varepsilon \leq t_1 \leq t_2 <T$. By definition, we have
\begin{multline*}
\Vert F (\varphi(t_2)) - F (\varphi(t_1)) \Vert_H^2 = \Vert F_1(\varphi(t_2)) - F_1 (\varphi(t_1)) \Vert_{L^2 (\mathbb{R}^3_{+})}^2\\ + \Vert F_2 (\varphi(t_2)) - F_2 (\varphi(t_1)) \Vert^2_{L^2 (\mathbb{R}^3_{-})} + \Vert F_3 (\varphi(t_2)) - F_3 (\varphi(t_1)) \Vert^2_{L^2 (\mathbb{R}^2)}.
\end{multline*}
Applying an argument similar to show \eqref{eq42} with \eqref{eq45}, we find that
\begin{align*}
\Vert F_1 ( \varphi(t_2) ) - F_1 (\varphi (t_1)) \Vert_{L^2 (\mathbb{R}^3_+)} \leq C \Vert {d \varphi_S}/{dt}(t_2) - {d \varphi_S}/{dt} (t_1) \Vert_{L^2 (\mathbb{R}^2)}\\ + C \Vert L_S \varphi_S (t_2) - L_S \varphi_S (t_1) \Vert_{L^2 (\mathbb{R}^2)} + C \Vert \varphi_S (t_2) - \varphi_S (t_1) \Vert_{L^2 (\mathbb{R}^2)}\\
\leq C (T,\kappa_A , \tilde{\kappa}_S, \beta ) (t_2 - t_1)^{\eta}. 
\end{align*}
Similarly, we see that
\begin{equation*}
\Vert F_2 ( \varphi(t_2) ) - F_2 (\varphi (t_1)) \Vert_{L^2 (\mathbb{R}^3_{-})} \leq C(T, \kappa_B , \tilde{\kappa}_S , \beta ) (t_2 - t_1)^{\eta}. 
\end{equation*}
By \eqref{eq45} and an argument similar to derive \eqref{eq44}, we observe that
\begin{equation*}
\Vert F_3 ( \varphi (t_2) ) - F_3 (\varphi (t_1)) \Vert_{L^2 (\mathbb{R}^2)} \leq C(T, \kappa_A , \kappa_B , \tilde{\kappa}_S, \alpha_S , \beta ) (t_2 - t_1)^{\eta}. 
\end{equation*}
As a result, we see that $F (\varphi) \in C^\eta ([\varepsilon , T);H)$ for each fixed $\varepsilon \in (0,T)$. Therefore, we see $(\rm{ii})$.
  \end{proof}

Applying Lemmas \ref{lem32} and \ref{lem42}, we have the following lemma.
\begin{lemma}\label{lem43}
Let $v_0 = { }^t (v_0^A , v_0^B , v_0^S) \in D (L^{1/2})$ and $\varphi = { }^t (\varphi_A , \varphi_B , \varphi_S ) \in X_T$. Assume that
\begin{equation*}
F (\varphi) \in C^{\eta}_{loc} ((0,T); H) \text{ for some }0 < \eta \leq 1/2.
\end{equation*}
Then system
\begin{equation*}
\begin{cases}
\frac{d}{dt} v + Lv = F (\varphi) \text{ on } (0,T),\\
v \vert_{t =0} = v_0,
\end{cases}
\end{equation*}
admits a unique strong solution $v$ in 
\begin{equation*}
C ([0,T);H) \cap C((0,T);D (L)) \cap C^1 ((0,T);H).
\end{equation*}
Moreover, $v$ satisfies the following properties:\\
$(\rm{i})$ For $0 < t <T$, $v$ is written by
\begin{equation*}
v (t) = {\rm{e}}^{- t L} v_0 + \int_0^t {\rm{e}}^{- ( t - \tau ) L} F (\varphi ( \tau )) { \ }d \tau.
\end{equation*}
$(\rm{ii})$ $v, dv/{dt}, L v \in C_{loc}^{\eta/2} ((0,T); H)$.\\
$(\rm{iii})$ If $T \leq 1$, then
\begin{multline*}
\Vert v \Vert_{X_T} \leq \Vert v_0 \Vert_{H} + 2 \Vert L^{1/2} v_0 \Vert_{H} + C (\kappa_A , \kappa_B , \tilde{\kappa}_S , \alpha_S , \beta) T^{1/2} \Vert \varphi \Vert_{X_T}\\
 + \bigg\{ \frac{K_S C_*}{\alpha_S} + K_A C_* \bigg( \frac{1}{\sqrt{\beta}} + \frac{\kappa_A}{\tilde{\kappa}_S \sqrt{\beta}} \bigg) + K_B C_* \bigg( \frac{1}{\sqrt{\beta}} + \frac{\kappa_B}{\tilde{\kappa}_S \sqrt{\beta}} \bigg) \bigg\} \Vert \varphi \Vert_{X_T}.
\end{multline*}
Here $K_A$, $K_B$, $K_S$ are the three positive constants appearing in $(\rm{vii})$ of Lemma \ref{lem21}, and $C_*$ is the positive constant appearing in $(\rm{i})$ of Lemma \ref{lem42}.
\end{lemma}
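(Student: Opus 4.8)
The plan is to read Lemma~\ref{lem43} as a packaging of Lemma~\ref{lem32} (applied with the \emph{frozen} source $\mathcal{F} = F(\varphi)$) together with the estimates of Lemma~\ref{lem42}, so that almost every assertion is a direct quotation and the only genuine work is the bookkeeping in (iii). First I would verify the hypotheses of Lemma~\ref{lem32} for $V_0 = v_0$ and $\mathcal{F} = F(\varphi)$: the Hölder-continuity requirement $F(\varphi) \in C^\eta_{loc}((0,T);H)$ is assumed outright, while the membership $F(\varphi) \in L^2(0,T;H)$ is supplied by part (i) of Lemma~\ref{lem42}, since $\varphi \in X_T$. With both hypotheses in hand, Lemma~\ref{lem32} immediately delivers existence and uniqueness of a strong solution $v$ in $C([0,T);H) \cap C((0,T);D(L)) \cap C^1((0,T);H)$ together with the Duhamel representation \eqref{eq33}, which is precisely assertion (i). Because $v_0 \in D(L^{1/2})$, part (ii) of Lemma~\ref{lem32} gives $v, dv/dt, Lv \in C^{\eta/2}_{loc}((0,T);H)$, which is assertion (ii).

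The substance lies in assertion (iii). Here I would combine the sup-bound \eqref{eq34} with the maximal-regularity bound \eqref{eq35} of Lemma~\ref{lem32}; adding them and recalling the definition of $\Vert \cdot \Vert_{X_T}$ yields
\begin{align*}
\Vert v \Vert_{X_T} &\leq \Vert v_0 \Vert_H + 2 \Vert L^{1/2} v_0 \Vert_H + T^{1/2} \Vert F(\varphi) \Vert_{L^2(0,T;H)}\\
&\quad + K_A \Vert F_1(\varphi) \Vert_{L^2(0,T;L^2(\mathbb{R}^3_+))} + K_B \Vert F_2(\varphi) \Vert_{L^2(0,T;L^2(\mathbb{R}^3_-))} + K_S \Vert F_3(\varphi) \Vert_{L^2(0,T;L^2(\mathbb{R}^2))}.
\end{align*}
Into this I would substitute the explicit bounds \eqref{eq42}--\eqref{eq44} for $\Vert F_1 \Vert$, $\Vert F_2 \Vert$, $\Vert F_3 \Vert$, together with the elementary estimate $\Vert F(\varphi) \Vert_{L^2(0,T;H)} \leq \Vert F_1 \Vert + \Vert F_2 \Vert + \Vert F_3 \Vert$ coming from the definition of $\Vert \cdot \Vert_H$.

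The decisive (but purely computational) step, and the main obstacle, is to sort the resulting terms into those carrying \emph{no} factor of $T^{1/2}$ and those carrying at least one. The $T^{1/2}$-free contributions arise exactly from the leading parts of $K_A \Vert F_1 \Vert$, $K_B \Vert F_2 \Vert$, $K_S \Vert F_3 \Vert$, and assemble into the bracketed coefficient $\frac{K_S C_*}{\alpha_S} + K_A C_*\big(\frac{1}{\sqrt{\beta}} + \frac{\kappa_A}{\tilde{\kappa}_S \sqrt{\beta}}\big) + K_B C_*\big(\frac{1}{\sqrt{\beta}} + \frac{\kappa_B}{\tilde{\kappa}_S \sqrt{\beta}}\big)$ multiplying $\Vert \varphi \Vert_{X_T}$; these are precisely the terms one later renders small by taking $\alpha_S$ and $\beta$ large in the contraction argument. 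Every remaining term carries a factor $T^{1/2}$ or $T$, and the hypothesis $T \leq 1$ (so that $T \leq T^{1/2}$) lets me collect all of them into a single term $C(\kappa_A,\kappa_B,\tilde{\kappa}_S,\alpha_S,\beta)\,T^{1/2}\Vert \varphi \Vert_{X_T}$, producing exactly the stated inequality. I anticipate no conceptual difficulty here; the care required is entirely in tracking which constants depend on which parameters and in checking that the isolated $\beta$- and $\alpha_S$-dependence of the non-$T^{1/2}$ block is the one needed downstream.
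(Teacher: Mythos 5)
Your proposal is correct and follows the paper's own proof essentially verbatim: invoke Lemma \ref{lem42}(i) to get $F(\varphi)\in L^2(0,T;H)$, apply Lemma \ref{lem32} for existence, uniqueness, (i) and (ii), then combine the contraction-semigroup sup-bound with the maximal-regularity estimate \eqref{eq35} and the bounds \eqref{eq42}--\eqref{eq44}, using $T\le 1$ to absorb all $T$-dependent terms into the $C\,T^{1/2}\Vert\varphi\Vert_{X_T}$ block. No gaps.
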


\begin{proof}[Proof of Lemma \ref{lem43}]
Since $F (\varphi) \in L^2(0,T;H)$ when $\varphi \in X_T$ from Lemma \ref{lem42}, we apply Lemma \ref{lem32} to deduce $(\rm{i})$ and $(\rm{ii})$. We now prove $(\rm{iii})$. By definition, we have
\begin{equation}\label{eq46}
\Vert v \Vert_{X_T} = \sup_{0 \leq t < T} \Vert v(t) \Vert_H + \Vert d v/{dt} \Vert_{L^2(0,T;H)} + \Vert L v \Vert_{L^2(0,T;H)}.
\end{equation}
Since ${\rm{e}}^{- t L}$ is a contraction $C_0$-semigroup on $H$, we apply property $(\rm{i})$, the Cauchy-Schwarz inequality, and Lemma \ref{lem42}, we check that
\begin{align*}
\Vert v(t) \Vert_H & \leq \Vert {\rm{e}}^{- t L} v_0 \Vert_H + \left\Vert \int_0^t {\rm{e}}^{- (t - \tau ) L } F (\varphi(\tau)) { \ }d\tau \right\Vert_H\\
& \leq \Vert v_0 \Vert_{H} + T^{1/2} \Vert F (\varphi ) \Vert_{L^2(0,T;H)}\\
& \leq \Vert v_0 \Vert_H + C (\kappa_A , \kappa_B, \tilde{\kappa}_S , \alpha_S , \beta) T^{1/2} \Vert \varphi \Vert_{X_T}.
\end{align*}
Note that $T \leq 1$. Thus, we have
\begin{equation}\label{eq47}
\sup_{0 \leq t <T} \Vert v(t) \Vert_H \leq \Vert v_0 \Vert_H + C (\kappa_A , \kappa_B, \tilde{\kappa}_S , \alpha_S , \beta) T^{1/2} \Vert \varphi \Vert_{X_T}.
\end{equation}
Using $(\rm{iii})$ of Lemma \ref{lem32} and Lemma \ref{lem42}, we observe that
\begin{multline}\label{eq48}
 \Vert d v/{dt} \Vert_{L^2(0,T;H)} + \Vert L v \Vert_{L^2(0,T;H)} \leq 2 \Vert L^{1/2} v_0 \Vert_H + K_A \Vert F_1 (\varphi) \Vert_{L^2(0,T;L^2(\mathbb{R}^3_+))}\\ + K_B \Vert F_2 (\varphi) \Vert_{L^2(0,T;L^2(\mathbb{R}^3_-))} + K_S \Vert F_3 (\varphi) \Vert_{L^2(0,T;L^2(\mathbb{R}^2))}\\
\leq 2 \Vert L^{1/2} v_0 \Vert_H\\
 + \bigg\{ \frac{C_* K_S}{\alpha_S} + K_A C_* \bigg( \frac{1}{\sqrt{\beta}} + \frac{\kappa_A}{\tilde{\kappa}_S \sqrt{\beta}} \bigg) + K_B C_* \bigg( \frac{1}{\sqrt{\beta}} + \frac{\kappa_B}{\tilde{\kappa}_S \sqrt{\beta}} \bigg)  \bigg\} \Vert \varphi \Vert_{X_T}\\
+ T^{1/2} C (\kappa_A , \kappa_B , \tilde{\kappa}_S , \alpha_S , \beta ) \Vert \varphi \Vert_{X_T}. 
\end{multline}
By \eqref{eq46}, \eqref{eq47} and \eqref{eq48}, we see $(\rm{iii})$.
  \end{proof}

\begin{remark}\label{rem44}From property $(\rm{iii})$ of Lemma \ref{lem43}, we set $\alpha_0 = \alpha_0 (\tilde{\kappa}_S) = 8 C_* K_S$ and
\begin{equation*}
\beta_0 = \beta_0 (\kappa_A , \kappa_B , \tilde{\kappa}_S) = 64 C_*^2 \left\{ K_A \left( 1 + \frac{\kappa_A}{\tilde{\kappa}_S} \right) + K_B \left( 1 + \frac{\kappa_B}{\tilde{\kappa}_S} \right)  \right\}^2.
\end{equation*}
Then we see that if $\alpha_S > \alpha_0$, $\beta > \beta_0$, and $T \leq 1$ then 
\begin{equation*}
\Vert v \Vert_{X_T} \leq \Vert v_0 \Vert_H + 2 \Vert L^{1/2} v_0 \Vert_H + \frac{1}{4} \Vert \varphi \Vert_{X_T} + C_\star T^{1/2} \Vert \varphi \Vert_{X_T}.
\end{equation*}
Here $C_\star = C_\star ( \kappa_A , \kappa_B , \tilde{\kappa}_S , \alpha_S , \beta) >0$.
\end{remark}

\subsection{Uniqueness of Strong Solutions}\label{subsec42}

Let us study the uniqueness of the strong solutions to system \eqref{eq41}.
\begin{lemma}\label{lem45}
Let $v_0 \in H$, and let $v_\sharp = { }^t (v^\sharp_A , v^\sharp_B , v^\sharp_S), v_\flat = { }^t (v_A^\flat , v_B^\flat, v_S^\flat) \in X_T \cap L^2 ((0,T) \times \mathbb{R}^3_{+,-,0})$. Assume that $v_\sharp, v_\flat$ satisfy 
\begin{equation*}
\begin{cases}
\frac{d}{d t} v_\sharp + L v_\sharp = F (v_\sharp ) \text{ on }(0,T),\\
v_\sharp \vert_{t =0} = v_0,
\end{cases}{ \ }\begin{cases}
\frac{d}{d t} v_\flat + L v_\flat = F (v_\flat ) \text{ on }(0,T),\\
v_\flat \vert_{t =0} = v_0.
\end{cases}
\end{equation*}
Then $v_\sharp = v_\flat$ on $[0,T)$.
\end{lemma}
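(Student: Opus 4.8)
The plan is to reduce to the difference and then, crucially, to \emph{undo} the change of variables that produced system \eqref{eq14}, so that the clean energy structure of the original system \eqref{eq11} becomes available. Set $w = {}^t(w_A,w_B,w_S) := v_\sharp - v_\flat$. Since each $F_j$ is linear in its argument, $F(v_\sharp)-F(v_\flat)=F(w)$, so $w$ is a strong solution of the \emph{homogeneous} problem $\frac{d}{dt}w+Lw=F(w)$ on $(0,T)$ with $w|_{t=0}=0$ and $\gamma_+[w_A]=\gamma_-[w_B]=0$. A direct energy estimate on $w$ itself is awkward, because $F_1(w),F_2(w)$ carry the terms $(\partial_t w_S){\rm e}^{\mp\beta x_3}$ and $(\Delta_h w_S){\rm e}^{\mp\beta x_3}$, which are not controlled by the natural dissipation $\Vert L^{1/2}w\Vert_H^2$. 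This is the main conceptual obstacle, and I resolve it by returning to the original unknowns.

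Define $\Theta_A := w_A + w_S{\rm e}^{-\beta x_3}$, $\Theta_B := w_B + w_S{\rm e}^{\beta x_3}$, and $\Theta_S := w_S$. A direct computation, using $\Delta(w_S{\rm e}^{\mp\beta x_3})=(\Delta_h w_S){\rm e}^{\mp\beta x_3}+\beta^2 w_S{\rm e}^{\mp\beta x_3}$ together with the definitions of $F_1,F_2,F_3$ and the relation $\kappa_S=\tilde{\kappa}_S\alpha_S$, shows that all forcing terms cancel, so that $(\Theta_A,\Theta_B,\Theta_S)$ is a strong solution of the homogeneous version of \eqref{eq11}, namely $\partial_t\Theta_A=\kappa_A\Delta\Theta_A$, $\partial_t\Theta_B=\kappa_B\Delta\Theta_B$, and $\alpha_S\partial_t\Theta_S=\kappa_S\Delta_h\Theta_S+\kappa_A\gamma_+[\partial_3\Theta_A]-\kappa_B\gamma_-[\partial_3\Theta_B]$, with $\Theta|_{t=0}=0$; the interface conditions $\gamma_+[\Theta_A]=\gamma_-[\Theta_B]=\Theta_S$ hold because $\gamma_\pm[w_{A,B}]=0$ while ${\rm e}^{\mp\beta x_3}|_{x_3=0}=1$. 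I would first record that this change of variables is licit in the given class: from $w\in X_T$ one gets $w_\natural\in L^2(0,T;D(L_\natural))$ (by the elliptic regularity of assertion $(\rm{iii})$ of Lemma \ref{lem21}) and $\partial_t w_\natural\in L^2(0,T;L^2)$, and by Lemma \ref{lem24} and its analogues for $\nabla$ and $\Delta$ the functions $w_S{\rm e}^{\mp\beta x_3}$ inherit $L^2(0,T;W^{2,2})$-regularity, so each $\Theta_\natural\in C([0,T);L^2)\cap L^2(0,T;W^{2,2})$ with $\partial_t\Theta_\natural\in L^2(0,T;L^2)$ and the normal-derivative traces $\gamma_\pm[\partial_3\Theta_{A,B}]$ are well defined in $L^2(\mathbb{R}^2)$.

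Next I would derive the energy identity. Testing the three equations against $\Theta_A$, $\Theta_B$, and $\alpha_S\Theta_S$ and integrating by parts over $\mathbb{R}^3_+$, $\mathbb{R}^3_-$, $\mathbb{R}^2$ (legitimate since $t\mapsto\Vert\Theta_\natural(t)\Vert_{L^2}^2$ is absolutely continuous with derivative $2\,\mathrm{Re}\,\langle\partial_t\Theta_\natural,\Theta_\natural\rangle$, by the standard Lions--Magenes lemma for the regularity class above), the boundary integrals on $\mathbb{R}^3_+$ and $\mathbb{R}^3_-$ produce $-\kappa_A\langle\gamma_+[\partial_3\Theta_A],\Theta_S\rangle$ and $+\kappa_B\langle\gamma_-[\partial_3\Theta_B],\Theta_S\rangle$ (using $\gamma_+[\Theta_A]=\gamma_-[\Theta_B]=\Theta_S$), which exactly cancel the surface-flux terms $\kappa_A\langle\gamma_+[\partial_3\Theta_A],\Theta_S\rangle-\kappa_B\langle\gamma_-[\partial_3\Theta_B],\Theta_S\rangle$ coming from the $\Theta_S$-equation. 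What remains is
\begin{equation*}
\tfrac{1}{2}\tfrac{d}{dt}\big(\Vert\Theta_A\Vert_{L^2(\mathbb{R}^3_+)}^2+\Vert\Theta_B\Vert_{L^2(\mathbb{R}^3_-)}^2+\alpha_S\Vert\Theta_S\Vert_{L^2(\mathbb{R}^2)}^2\big)=-\kappa_A\Vert\nabla\Theta_A\Vert^2-\kappa_B\Vert\nabla\Theta_B\Vert^2-\kappa_S\Vert\nabla_h\Theta_S\Vert^2\le 0.
\end{equation*}

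Finally, since $\Theta|_{t=0}=0$, integrating in time yields $\Vert\Theta_A(t)\Vert^2+\Vert\Theta_B(t)\Vert^2+\alpha_S\Vert\Theta_S(t)\Vert^2\le 0$ for every $t\in[0,T)$, hence $\Theta\equiv 0$; unwinding the definitions gives $w_S=\Theta_S=0$ and then $w_A=\Theta_A-w_S{\rm e}^{-\beta x_3}=0$, $w_B=0$, i.e. $v_\sharp=v_\flat$ on $[0,T)$. I would emphasize that this argument needs only $\kappa_A,\kappa_B,\tilde{\kappa}_S,\alpha_S,\beta>0$ and no largeness of $\alpha_S$ or $\beta$, which matches the hypotheses of the lemma: the whole point of transforming back to $(\Theta_A,\Theta_B,\Theta_S)$ is to convert the delicate nonlocal coupling of \eqref{eq41} into the dissipative, flux-cancelling structure of \eqref{eq11}. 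The only genuinely technical step is justifying this energy identity in the low-regularity class $X_T\cap L^2((0,T)\times\mathbb{R}^3_{+,-,0})$, which is handled by the regularity transfer noted above together with the Lions--Magenes lemma.
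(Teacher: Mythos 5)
Your proposal is correct and follows essentially the same route as the paper: take the difference $w=v_\sharp-v_\flat$, use linearity of $F$, transform back to the original unknowns via $\Theta_A=w_A+w_S{\rm e}^{-\beta x_3}$, $\Theta_B=w_B+w_S{\rm e}^{\beta x_3}$, $\Theta_S=w_S$, and run the energy identity for the homogeneous version of \eqref{eq11} with cancelling flux terms. The only difference is that you spell out the regularity justification (Lions--Magenes) and the independence from largeness of $\alpha_S,\beta$ more explicitly than the paper does, which is a welcome addition but not a different argument.
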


\begin{proof}[Proof of Lemma \ref{lem45}]
Set $v_* = v_* (t) = { }^t (v_A^* , v_B^* , v_S^*) := { }^t (v_A^\sharp - v_A^\flat, v_B^\sharp - v_B^\flat , v_S^\sharp - v_S^\flat)$. Then we see that $v_* \in X_T \cap L^2((0,T) \times \mathbb{R}^3_{+,-,0})$ and that $v_*$ satisfy
\begin{equation*}
\begin{cases}
\frac{d}{d t} v_* + L v_* = F (v_* ) \text{ on }(0,T),\\
v_* \vert_{t =0} = { }^t(0,0,0).
\end{cases}
\end{equation*}
Set
\begin{equation*}
\begin{cases}
\vartheta_A = \vartheta_A (x,t) := v_A^* (t) + v^*_S (t) {\rm{e}}^{- \beta x_3},\\
\vartheta_B = \vartheta_B (x,t) := v_B^* (t) + v^*_S (t) {\rm{e}}^{\beta x_3},\\
\vartheta_S = \vartheta_S ( x_h , t ) := v^*_S (t). 
\end{cases}
\end{equation*}
We easily check that
\footnotesize
\begin{align*}
\vartheta_A & \in C ([0, T); L^2 (\mathbb{R}^3_{+})) \cap L^2 (0,T; W^{2,2}(\mathbb{R}^3_+)) \cap W^{1,2}(0,T ; L^2(\mathbb{R}^3_+)) \cap L^2( (0,T) \times \mathbb{R}^3_{+}),\\
\vartheta_B & \in C ([0, T); L^2 (\mathbb{R}^3_{-})) \cap L^2 (0,T; W^{2,2}(\mathbb{R}^3_{-})) \cap W^{1,2}(0,T ; L^2(\mathbb{R}^3_{-})) \cap L^2((0,T) \times \mathbb{R}^3_{-}),\\
\vartheta_S & \in C ([0, T); L^2 (\mathbb{R}^2)) \cap L^2 (0,T; W^{2,2}(\mathbb{R}^2)) \cap W^{1,2}(0,T ; L^2(\mathbb{R}^2)) \cap L^2 ((0,T) \times \mathbb{R}^2),
\end{align*}\normalsize
and that
\begin{equation}\label{eq49}
\begin{cases}
\partial_t \vartheta_A = \kappa_A \Delta \vartheta_A & \text{ in } \mathbb{R}^3_+ \times (0, T) ,\\
\partial_t \vartheta_B = \kappa_B \Delta \vartheta_B & \text{ in } \mathbb{R}^3_{-} \times (0, T ),\\
\partial_t \vartheta_S = \tilde{\kappa}_S \Delta_h \vartheta_S + \frac{\kappa_A}{\alpha_S} \gamma_{+}[ \partial_3 \vartheta_A] - \frac{\kappa_B}{\alpha_S} \gamma_{-}[ \partial_3 \vartheta_B] & \text{ in } \mathbb{R}^2 \times (0, T ),\\
\gamma_{+} [\vartheta_A] = \gamma_{-} [\vartheta_B] = \vartheta_S & \text{ in } \mathbb{R}^2 \times (0, T),\\
\vartheta_A \vert_{t=0} = 0 \text{ in } \mathbb{R}^3_+, \vartheta_B \vert_{t = 0} = 0 \text{ in }\mathbb{R}^3_{-}, \vartheta_S \vert_{t=0} = 0 &\text{ in } \mathbb{R}^2 .
\end{cases}
\end{equation}
A direct calculation gives
\begin{multline*}
\frac{1}{2}\frac{d}{dt} ( \Vert \vartheta_A (t) \Vert_{L^2(\mathbb{R}^3_+)}^2 + \Vert \vartheta_B(t) \Vert_{L^2( \mathbb{R}^3_-)}^2 + \alpha_S \Vert \vartheta_S (t) \Vert_{L^2(\mathbb{R}^2)}^2)\\
= \left< \partial_t \vartheta_A , \vartheta_A  \right>_{L^2(\mathbb{R}^3_+)} + \left< \partial_t \vartheta_B, \vartheta_B \right>_{L^2 (\mathbb{R}^3_-)} + \left< \alpha_S \partial_t \vartheta_S , \vartheta_S \right>_{L^2(\mathbb{R}^2)}.
\end{multline*}
Using system \eqref{eq49} and integration by parts, we see that
\begin{multline*}
\frac{1}{2}\frac{d}{dt} ( \Vert \vartheta_A (t) \Vert_{L^2(\mathbb{R}^3_+)}^2 + \Vert \vartheta_B(t) \Vert_{L^2( \mathbb{R}^3_-)}^2 + \alpha_S \Vert \vartheta_S (t) \Vert_{L^2(\mathbb{R}^2)}^2)\\
= - \kappa_A \Vert \nabla \vartheta_A (t) \Vert_{L^2 (\mathbb{R}^3_+)}^2 - \kappa_B \Vert \nabla \vartheta_B (t) \Vert_{L^2(\mathbb{R}^3_-)}^2 - \tilde{\kappa}_S \alpha_S \Vert \nabla_h \theta_S (t) \Vert_{L^2(\mathbb{R}^2)}^2 .
\end{multline*}
Integrating with respect to time with $(\vartheta_A, \vartheta_B , \vartheta_S) \vert_{t=0} =(0,0,0)$, we find that for all $ t(0 < t < T)$
\begin{equation*}
\Vert \vartheta_A (t) \Vert_{L^2(\mathbb{R}^3_{+})}^2 + \Vert \vartheta_B (t) \Vert_{L^2(\mathbb{R}^3_{-})}^2 + \alpha_S \Vert \vartheta_S (t) \Vert_{L^2(\mathbb{R}^2)}^2 \leq 0. 
\end{equation*}
Therefore, we conclude that $(\vartheta_A, \vartheta_B , \vartheta_S) = (0,0,0)$ on $[0,T)$. This shows that $v^* =0$ and $v_\sharp = v_\flat$. Therefore, the lemma follows.
  \end{proof}

\subsection{Existence of a Local-in-Time Strong Solution}\label{subsec43}

In this section, we construct a local-in-time strong solution to system \eqref{eq41}. Let $\alpha_0$, $\beta_0$ be the two positive constants appearing in Remark \ref{rem44}.
\begin{proposition}[Existence of a local-in-time strong solution]\label{prop46}
Assume that $\alpha_S > \alpha_0$ and $\beta > \beta_0$. Then for each $v_0 = { }^t (v_0^A , v_0^B , v_0^S) \in D (L^{1/2})$, system \eqref{eq41} admits a unique local-in-time strong solution $v$ in 
\begin{equation*}
C ([0, T_*) ; H ) \cap L^2 (0, T_*; D(L)) \cap W^{1,2} (0, T_* ; H) \cap L^2((0,T_*) \times \mathbb{R}^3_{+,-,0} ),
\end{equation*}
satisfying
\begin{equation*}
\lim_{t \to 0 + 0} v (t) = v_0 \text{ in }H. 
\end{equation*}
Here $T_* = T_*(\kappa_A, \kappa_B,\tilde{\kappa}_S , \alpha_S, \beta) >0$ and $T_* \leq 1$.
\end{proposition}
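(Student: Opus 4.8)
The plan is to prove Proposition~\ref{prop46} by a contraction mapping argument on the Banach space $X_T$, using Lemma~\ref{lem43} to define the map and Remark~\ref{rem44} to establish the required estimates. First I would fix $\alpha_S > \alpha_0$ and $\beta > \beta_0$, and fix $v_0 \in D(L^{1/2})$. For $T \leq 1$ to be chosen, define a map $\Phi : X_T \to X_T$ by sending $\varphi \in X_T$ to the unique strong solution $v = \Phi(\varphi)$ of the linear system
\begin{equation*}
\begin{cases}
\frac{d}{dt} v + L v = F(\varphi) \text{ on } (0,T),\\
v \vert_{t=0} = v_0,
\end{cases}
\end{equation*}
provided by Lemma~\ref{lem43}. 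To invoke that lemma I must first check its hypothesis that $F(\varphi) \in C^{\eta}_{loc}((0,T);H)$ for some $0 < \eta \leq 1/2$; here the difficulty is that a generic $\varphi \in X_T$ need not have the H\"older regularity in $t$ required by part~$(\rm{ii})$ of Lemma~\ref{lem42}. I would resolve this by working on a closed subset of $X_T$ consisting of those $\varphi$ that additionally satisfy $\varphi, L\varphi, d\varphi/dt \in C^{\eta}_{loc}((0,T);H)$, and by observing via part~$(\rm{ii})$ of Lemma~\ref{lem43} that the output $v = \Phi(\varphi)$ automatically lies in $C^{\eta/2}_{loc}((0,T);H)$ together with its derivative and $Lv$, so the map preserves this extra regularity and the iteration is well-defined.

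Next I would set up the invariant ball and the contraction estimate. Let $R := \Vert v_0 \Vert_H + 2 \Vert L^{1/2} v_0 \Vert_H$ and let
\begin{equation*}
B_R := \{ \varphi \in X_T ; { \ } \Vert \varphi \Vert_{X_T} \leq 2R, { \ } \varphi, L\varphi, d\varphi/dt \in C^{\eta}_{loc}((0,T);H) \}.
\end{equation*}
By Remark~\ref{rem44}, for $\varphi \in B_R$ we have
\begin{equation*}
\Vert \Phi(\varphi) \Vert_{X_T} \leq R + \tfrac{1}{4} \Vert \varphi \Vert_{X_T} + C_\star T^{1/2} \Vert \varphi \Vert_{X_T} \leq R + \tfrac{1}{2}R + 2 C_\star T^{1/2} R,
\end{equation*}
so choosing $T = T_*$ small enough that $2 C_\star T_*^{1/2} \leq 1/4$ gives $\Vert \Phi(\varphi) \Vert_{X_T} \leq 2R$, i.e. $\Phi$ maps $B_R$ into itself. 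For the contraction estimate I would exploit linearity: since $F$ is affine in $\varphi$ (each $F_j$ depends linearly on $\varphi_A, \varphi_B, \varphi_S$ and their derivatives), for $\varphi, \psi \in B_R$ the difference $\Phi(\varphi) - \Phi(\psi)$ solves the same linear system with zero initial data and right-hand side $F(\varphi) - F(\psi) = F(\varphi - \psi)$. Applying part~$(\rm{iii})$ of Lemma~\ref{lem43} with $v_0 = 0$ and the bilinear-free structure yields
\begin{equation*}
\Vert \Phi(\varphi) - \Phi(\psi) \Vert_{X_T} \leq \Big( \tfrac{1}{4} + C_\star T^{1/2} \Big) \Vert \varphi - \psi \Vert_{X_T},
\end{equation*}
so shrinking $T_*$ further if necessary makes $\Phi$ a contraction.

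I would then invoke the Banach fixed-point theorem to obtain a unique $v \in B_R$ with $\Phi(v) = v$; this $v$ is the desired local-in-time strong solution of \eqref{eq41} on $[0, T_*)$, lying in the stated class $C([0,T_*);H) \cap L^2(0,T_*;D(L)) \cap W^{1,2}(0,T_*;H)$ by construction. The membership in $L^2((0,T_*) \times \mathbb{R}^3_{+,-,0})$ follows because $v \in C([0,T_*);H)$ and $T_* < \infty$ force each component to be square-integrable over the finite time-space cylinder. The initial-condition limit $\lim_{t\to 0+0} v(t) = v_0$ in $H$ is immediate from $v \in C([0,T_*);H)$ and $v\vert_{t=0} = v_0$. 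Finally, uniqueness within the full solution class is not merely the fixed-point uniqueness in $B_R$ but is supplied by Lemma~\ref{lem45}, which I would cite directly.

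I expect the main obstacle to be the regularity bookkeeping needed to make the fixed-point iteration legitimate: the H\"older continuity hypothesis of Lemma~\ref{lem43} is not part of the $X_T$ norm, so one must carry the auxiliary condition $\varphi, L\varphi, d\varphi/dt \in C^{\eta}_{loc}((0,T);H)$ through the iteration and verify that $B_R$ is closed in the $X_T$ topology (so that the fixed point inherits the regularity) while the smoothing from part~$(\rm{ii})$ of Lemma~\ref{lem43} only gives $C^{\eta/2}_{loc}$ rather than $C^{\eta}_{loc}$. A clean way to handle this is to note that the fixed point, being a strong solution with $v_0 \in D(L^{1/2})$, satisfies the conclusion of part~$(\rm{ii})$ of Lemma~\ref{lem32} directly, so the limiting object has the full interior H\"older regularity regardless of the exponent lost during intermediate iterates; the iteration itself can be run in the pure $X_T$ norm with the H\"older property verified a posteriori on the limit.
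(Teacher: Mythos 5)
Your plan is, at bottom, the same as the paper's: the same estimates (part $(\rm{iii})$ of Lemma \ref{lem43} together with Remark \ref{rem44}) drive a fixed-point iteration with $T_*$ chosen so that $C_\star T_*^{1/2}\leq 1/4$, and uniqueness in the full class is delegated to Lemma \ref{lem45}. The one place where your packaging does not literally work is the set $B_R$: as you yourself observe, $\Phi$ does not map $B_R$ into itself, because part $(\rm{ii})$ of Lemma \ref{lem43} only returns $C^{\eta/2}_{loc}$ regularity from $C^{\eta}_{loc}$ input, and $B_R$ is not closed in the $X_T$ topology, so the Banach fixed-point theorem cannot be invoked on $B_R$ as defined. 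Your proposed repair (``run the iteration in the pure $X_T$ norm and verify H\"older regularity a posteriori'') is also not quite right as stated, since Lemma \ref{lem43} cannot even be applied to a generic $\varphi\in X_T$ --- some H\"older regularity must be carried through every step in order to define the next iterate. The paper's resolution is precisely the explicit Picard scheme \eqref{eq4011} started from the homogeneous solution $v_1={\rm{e}}^{-tL}v_0$ (which is $C^{1/2}_{loc}$ by analyticity of the semigroup), letting the H\"older exponent degrade as $(1/2)^{m+1}$ along the iterates: each exponent is positive, so every application of Lemma \ref{lem43} is legitimate, while the contraction estimate \eqref{eq4023} and the uniform bound \eqref{eq4016} are exponent-independent and give convergence in $X_{T_*}$; the limit is then identified as a solution in the $L^2$-sense, with no H\"older claim needed on the limit itself. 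Your direct verification of $v\in L^2((0,T_*)\times\mathbb{R}^3_{+,-,0})$ from $v\in C([0,T_*);H)$ and $T_*<\infty$ is a small simplification over the paper's detour through the heat-kernel representation, and is fine.
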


\begin{proof}[Proof of Proposition \ref{prop46}]
Assume that $\alpha_S > \alpha_0$ and $\beta > \beta_0$, and that $T \leq 1$. Fix $v_0 = { }^t (v_0^A, v_0^B, v_0^S) \in D (L^{1/2})$. For each $m \in \mathbb{N}$, set $v_1 =v_1 (t)= { }^t (v_1^A, v_1^B, v_1^S)$ and $v_{m+1} = v_{m+1}(t) = { }^t (v_{m+1}^A , v_{m+1}^B , v_{m+1}^S)$ as follows:
\begin{equation}\label{eq4010}
\begin{cases}
\frac{d}{d t} v_1 + L v_1 = 0 \text{ on }(0,T),\\
v_1 \vert_{t = 0 } = v_0,
\end{cases}
\end{equation}
\begin{equation}\label{eq4011}
\begin{cases}
\frac{d}{d t} v_{m+1} + L v_{m+1} = F (v_m) \text{ on }(0,T),\\
v_{m + 1 } \vert_{t = 0 } = v_0.
\end{cases}
\end{equation}
We first consider system \eqref{eq4010}. Since $-L$ generates an analytic semigroup on $H$, we see that system \eqref{eq4010} admits a unique strong solution $v_1$ such that
\begin{align*}
& v_1 \in C ([0,T);H) \cap C((0,T);D(L)) \cap C^1((0,T);H),\\
& v_1 (t) = {\rm{e}}^{-t L} v_0 { \ }(0 < t < T),\\
& \lim_{t \to 0 + 0} v_1 (t) = v_0 \text{ in }H,\\
&  v_1 ,{ \ } d v_1/{dt},{ \ }Lv_1 \in C^{1/2}_{loc}((0,T);H).
\end{align*}
Moreover, from $({\rm{iii}})$ of Lemma \ref{lem32} we have
\begin{equation*}
\Vert v_1 \Vert_{X_T} \leq \Vert v_0 \Vert_H + 2 \Vert L^{1/2} v_0 \Vert_H.
\end{equation*}
By assertion $(\rm{ii})$ of Lemma \ref{lem42}, we see that
\begin{equation*}
F (v_1) \in L^2(0,T;H) \cap C^{1/2}_{loc}((0,T);H).
\end{equation*}

Next, we consider system \eqref{eq4011}. We prove that for each $m \in \mathbb{N}$ system \eqref{eq4011} admits a unique strong solution $v_{m+1}$ such that
\begin{equation}\label{eq4012}
v_{m+1} \in C ([0,T);H) \cap C((0,T);D(L)) \cap C^1((0,T);H),
\end{equation}
\begin{equation}
v_{m+1} (t) = {\rm{e}}^{ - t L} v_0 + \int_0^t {\rm{e}}^{- (t - \tau )L} F (v_m (\tau)) { \ }d \tau { \ }(0 < t <T),
\end{equation}
\begin{equation}
\lim_{t \to 0 + 0} v_{m+1} (t) = v_0 \text{ in }H,
\end{equation}
\begin{equation}
v_{m+1} ,{ \ } d v_{m+1}/{dt},{ \ }Lv_{m+1} \in C^{(\frac{1}{2})^{m+1}}_{loc}((0,T);H), 
\end{equation}
and
\begin{equation}\label{eq4016}
\Vert v_{m+1} \Vert_{X_T} \leq \Vert v_0 \Vert_H + 2 \Vert L^{1/2} v_0 \Vert_H + \left( \frac{1}{4} + C_{\star} T^{1/2} \right) \Vert v_m \Vert_{X_T}.
\end{equation}
Here $C_\star = C_\star (\kappa_A , \kappa_B , \tilde{\kappa}_S, \alpha_S , \beta )$ is the positive constant appearing in Remark \ref{rem44}.

We now consider the case when $m=1$, that is,
\begin{equation}\label{eq4017}
\begin{cases}
\frac{d}{d t} v_2 + L v_2 = F (v_1) \text{ on }(0,T),\\
v_2 \vert_{t = 0 } = v_0.
\end{cases}
\end{equation}
Since $v_0 \in H$, $F (v_1) \in L^2(0,T;H) \cap C^{1/2}_{loc}((0,T);H)$, $- L $ generates an analytic semigroup on $H$, we see that system \eqref{eq4017} admits a unique strong solution $v_2$ such that
\begin{align*}
& v_2 \in C ([0,T);H) \cap C((0,T);D(L)) \cap C^1((0,T);H),\\
& v_2 (t) = {\rm{e}}^{-t L} v_0 + \int_0^t {\rm{e}}^{- ( t - \tau) L} F (v_1 (\tau)){ \ }d \tau { \ }(0 < t < T).
\end{align*}
Since ${\rm{e}}^{- t L}$ is a $C_0$-semigroup on $H$ and $F (v_1) \in L^2(0,T;H)$, we find that
\begin{equation*}
\lim_{t \to 0 + 0} v_2 (t) = v_0 \text{ in }H.
\end{equation*}
Since $F (v_1) \in L^2(0,T;H) \cap C_{loc}^{1/2} ((0,T); H)$ and $v_0 \in D (L^{1/2})$, it follows from $(\rm{ii})$ of Lemma \ref{lem32} to see that
\begin{equation}\label{eq4018}
v_2 ,{ \ } d v_2/{dt},{ \ }Lv_2 \in C^{1/4}_{loc}((0,T);H).
\end{equation}
Using $(\rm{iii})$ of Lemma \ref{lem43} and Remark \ref{rem44}, we find that
\begin{equation}
\Vert v_2 \Vert_{X_T} \leq \Vert v_0 \Vert_H + 2 \Vert L^{1/2} v_0 \Vert_H + \left( \frac{1}{4} + C_{\star} T^{1/2} \right) \Vert v_1 \Vert_{X_T}.\label{eq4019}
\end{equation}
From \eqref{eq4018}, \eqref{eq4019}, and Lemma \ref{lem42}, we see that
\begin{equation*}
F (v_2) \in L^2(0,T;H) \cap C^{1/4}_{loc}((0,T);H).
\end{equation*}

We now consider the case when $m=2$, that is,
\begin{equation}\label{eq4020}
\begin{cases}
\frac{d}{d t} v_3 + L v_3 = F (v_2) \text{ on }(0,T),\\
v_3 \vert_{t = 0 } = v_0.
\end{cases}
\end{equation}
Since $v_0 \in H$, $F (v_2) \in L^2(0,T;H) \cap C^{1/4}_{loc}((0,T);H)$, $- L $ generates an analytic semigroup on $H$, we see that system \eqref{eq4017} admits a unique strong solution $v_3$ such that
\begin{align*}
& v_ 3\in C ([0,T);H) \cap C((0,T);D(L)) \cap C^1((0,T);H),\\
& v_3 (t) = {\rm{e}}^{-t L} v_0 + \int_0^t {\rm{e}}^{- ( t - \tau) L} F (v_2 (\tau)){ \ }d \tau { \ }(0 < t < T).
\end{align*}
Since ${\rm{e}}^{- t L}$ is a $C_0$-semigroup on $H$ and $F (v_2) \in L^2(0,T;H)$, we find that
\begin{equation*}
\lim_{t \to 0 + 0} v_3 (t) = v_0 \text{ in }H.
\end{equation*}
Since $F (v_2) \in L^2(0,T;H) \cap C_{loc}^{1/4} ((0,T); H)$ and $v_0 \in D (L^{1/2})$, it follows from $(\rm{ii})$ of Lemma \ref{lem32} to see that
\begin{equation}\label{eq4021}
v_3 ,{ \ } d v_3/{dt},{ \ }Lv_3 \in C^{1/8}_{loc}((0,T);H).
\end{equation}
Using $(\rm{iii})$ of Lemma \ref{lem43} and Remark \ref{rem44}, we find that
\begin{equation}\label{eq4022}
\Vert v_3 \Vert_{X_T} \leq \Vert v_0 \Vert_H + 2 \Vert L^{1/2} v_0 \Vert_H + \left( \frac{1}{4} + C_{\star} T^{1/2}\right) \Vert v_2 \Vert_{X_T}.
\end{equation}
From \eqref{eq4021}, \eqref{eq4022}, and Lemma \ref{lem42}, we see that
\begin{equation*}
F (v_3) \in L^2(0,T;H) \cap C^{1/8}_{loc}((0,T);H).
\end{equation*}

By induction, we see that for each $m \in \mathbb{N}$ system \eqref{eq4011} admits a unique strong solution $v_{m+1}$ satisfying \eqref{eq4012}-\eqref{eq4016}.

Next, we prove that for each $m \in \mathbb{N}$
\begin{equation}\label{eq4023}
\Vert v_{m+2} - v_{m+1} \Vert_{X_T} \leq \left( \frac{1}{4} + C_\star T^{1/2} \right) \Vert v_{m+1} - v_m \Vert_{X_T}.
\end{equation}
From
\begin{equation*}
\begin{cases}
\frac{d}{d t} v_{m+2} + L v_{m+2} = F (v_{m+1}) \text{ on }(0,T),\\
v_{m + 2 } \vert_{t = 0 } = v_0,
\end{cases}
\begin{cases}
\frac{d}{d t} v_{m+1} + L v_{m+1} = F (v_m),\\
v_{m + 1 } \vert_{t = 0 } = v_0,
\end{cases}
\end{equation*}
we have
\begin{equation*}
\begin{cases}
\frac{d}{d t} (v_{m+2} - v_{m+1}) + L (v_{m+2} - v_{m+1}) = F (v_{m+1} - v_m) \text{ on }(0,T),\\
(v_{m + 2 } - v_{m+1}) \vert_{t = 0 } = 0.
\end{cases}
\end{equation*}
By $(\rm{iii})$ of Lemma \ref{lem43} and Remark \ref{rem44}, we have \eqref{eq4023}.

Now we choose $T_*$ such that $C_\star T_*^{1/2} \leq 1/4$. Then we have
\begin{equation*}
\Vert v_{m+2} - v_{m+1} \Vert_{X_{T_*}} \leq \frac{1}{2} \Vert v_{m+1} - v_m \Vert_{X_{T_*}}.
\end{equation*}
From \eqref{eq4016}, we have
\begin{equation*}
\Vert v_{m+1} \Vert_{X_{T_*}} \leq \Vert v_0 \Vert_H + 2 \Vert L^{1/2} v_0 \Vert_H + \frac{1}{2} \Vert v_m \Vert_{X_{T_*}}.
\end{equation*}
Since $\Vert v_1 \Vert_{X_{T_*}} \leq \Vert v_0 \Vert_H + 2 \Vert L^{1/2} v_0 \Vert_H$, we see that for each $m \in \mathbb{N}$
\begin{equation*}
\Vert v_{m+1} \Vert_{X_{T_*}} \leq 2 \Vert v_0 \Vert_H + 4 \Vert L^{1/2} v_0 \Vert_H.
\end{equation*}
We also see that
\begin{align*}
\Vert v_{m + 2} - v_{m+1} \Vert_{X_{T_*}} & \leq \bigg(\frac{1}{2}\bigg)^{m} \Vert v_2 - v_1 \Vert_{X_{T_*}},\\
 & \leq \bigg( \frac{1}{2} \bigg)^{m} ( 3 \Vert v_0 \Vert_H + 6 \Vert L^{1/2} v_0 \Vert_H ) .
\end{align*}
From a fixed-point argument, we have a unique function $v$ in $X_{T_*}$ satisfying
\begin{align}
& \lim_{m \to \infty} \Vert v_m - v \Vert_{X_{T_*}} = 0,\label{eq4024}\\
& \Vert v \Vert_{X_{T_*}} \leq 2 \Vert v_0 \Vert_H + 4 \Vert L^{1/2} v_0 \Vert_H. \notag
\end{align}
From $(\rm{i})$ of Lemma \ref{lem42}, we see that
\begin{equation}\label{eq4025}
F (v) \in L^2(0,T_* ;H).
\end{equation}
Since $v_{m+1}$ satisfies
\begin{equation*}
\begin{cases}
\frac{d}{dt} v_{m+1} + L v_{m+1} = F (v_m) \text{ on }(0,T_*),\\
v_{m+1}\vert_{t = 0} = v_0,
\end{cases}
\end{equation*}
and for $0< t < T_*$
\begin{equation*}
v_{m+1} (t) = {\rm{e}}^{- t L}v_0 + \int_0^t {\rm{e}}^{- (t - \tau )L} F (v_m(\tau)) { \ }d \tau,
\end{equation*}
we apply \eqref{eq4024} and Lemmas \ref{lem22}, \ref{lem42} to see that
\begin{equation*}
\left\Vert \frac{d}{dt}v + L v - F (v) \right\Vert_{L^2(0,T_*;H)} = 0
\end{equation*}
and that for $0< t <T_*$
\begin{equation*}
v (t) = {\rm{e}}^{- t L}v_0 + \int_0^t {\rm{e}}^{- (t - \tau )L} F (v(\tau)) { \ }d \tau.
\end{equation*}
Since ${\rm{e}}^{- t L}$ is a $C_0$-semigroup on $H$, we use the Cauchy-Schwarz inequality and \eqref{eq4025} to observe that
\begin{align*}
\Vert v(t) - v_0 \Vert_H & \leq \Vert {\rm{e}}^{- t L} v_0 - v_0 \Vert_H + t^{1/2} \Vert F (v) \Vert_{L^2(0,T;H)}\\
& \to 0 { \ }(t \to 0 + 0).
\end{align*}

Now we show that
\begin{equation}\label{eq4026}
v \in L^2((0,T_*) \times \mathbb{R}^3_{+,-,0}).
\end{equation}
From Lemma \ref{lem31}, we consider $v_1$ and $v_{m+1}$ as follows:
\begin{align*}
v_1 (x,t) & = G* v_0,\\
v_{m+1} (x,t) &= G*v_0 + \int_0^t G*F (v_m(x,\tau)) { \ }d \tau. 
\end{align*}
Here $G$ denotes the heat kernels (see Lemmas \ref{lem31} and \ref{lem21} for $G$). We easily check that for each $m \in \mathbb{N}$, $v_m \in L^2((0,T_*) \times \mathbb{R}^3_{+,-,0}) $. By \eqref{eq4020}, we see that
\begin{equation*}
\Vert v_{m+1} - v_m \Vert_{L^2((0,T_*) \times \mathbb{R}^3_+)}  \to 0  \text{ as }m \to \infty.
\end{equation*}
Therefore, we see \eqref{eq4026}. From Lemma \ref{lem45}(the uniqueness of the strong solutions to system \eqref{eq41}), we see that $v$ is a unique local-in-time strong solution to \eqref{eq41}. Therefore, Proposition \ref{prop46} is proved.
  \end{proof}

\subsection{Existence of a Global-in-Time Strong Solution}\label{subsec44}

In this section, we construct a global-in-time strong solution of system \eqref{eq41}. Let $\alpha_0$, $\beta_0$ be the two positive constants appearing in Remark \ref{rem44}, and let $T_*$ the positive constant appearing in Proposition \ref{prop46}. 
\begin{proof}[Proof of Theorem \ref{thm41}]
Assume that $\alpha_S > \alpha_0$ and $\beta > \beta_0$. Fix $v_0 \in D (L^{1/2})$. We first consider the following system:
\begin{equation}\label{eq4027}
\begin{cases}
\frac{d}{dt}v^1 + L v^1 = F (v^1) \text{ on } (0,T_*),\\
v^1 \vert_{t=0} = v_0.
\end{cases}
\end{equation}
Since $T_*$ does not depend on initial data, we apply Proposition \ref{prop46} to see that there exists a unique strong solution $v^1$ of system \eqref{eq4027}. Next, we consider the following system:
\begin{equation}\label{eq4028}
\begin{cases}
\frac{d}{dt}v^2 + L v^2 = F (v^2) \text{ on } (T_*/2,T_*/2 + T_*),\\
v^2 \vert_{t=T_*/2} = v^1(T_*/2).
\end{cases}
\end{equation}
Since $T_*$ does not depend on initial data, it follows from Proposition \ref{prop46} to see that there exists a unique strong solution $v^2$ of system \eqref{eq4028}. Next, we consider the following system:
\begin{equation}\label{eq4029}
\begin{cases}
\frac{d}{dt}v^3 + L v^3 = F (v^3) \text{ on } (T_*, 2T_*),\\
v^3 \vert_{t = T_*} = v^2 (T_*).
\end{cases}
\end{equation}
From Proposition \ref{prop46}, there exists a unique strong solution $v^3$ of system \eqref{eq4029}. Now we set
\begin{equation*}
v = v (t) =
\begin{cases}
v^1 &\text{ on }(0,3T_*/4],\\
v^2 & \text{ on }(3 T_*/4, 5T_*/4],\\
v^3 & \text{ on }(5T_*/4, 2 T_*).
\end{cases}
\end{equation*}
Then we find that
\begin{equation*}
v \in C([0,2 T_*); H) \cap L^2(0,2 T_*; D (L) ) \cap W^{1,2} (0, 2 T_* ; H) \cap L^2((0,2T_*) \times \mathbb{R}^3_{+,-,0}),
\end{equation*}
and that
\begin{equation*}
\begin{cases}
\frac{d}{dt}v + L v = F (v) \text{ on } (0,2 T_*),\\
v \vert_{t=0} = v_0.
\end{cases}
\end{equation*}
Let $T >1$. Since $T_*$ does not depend on initial data, we repeat the same argument above to see that there is a function $v$ such that
\begin{equation*}
v \in C([0, T); H) \cap L^2(0,T; D (L) ) \cap W^{1,2} (0, T ; H) \cap L^2((0,T) \times \mathbb{R}^3_{+,-,0})
\end{equation*}
and
\begin{equation*}
\begin{cases}
\frac{d}{dt}v + L v = F (v) \text{ on } (0, T),\\
v \vert_{t=0} = v_0.
\end{cases}
\end{equation*}
Since we can choose $T$ to be any positive number, we find that there is $v$ such that
\begin{equation*}
v \in C([0, \infty); H) \cap L_{loc}^2(0, \infty; D (L) ) \cap W_{loc}^{1,2} (0, \infty ; H) \cap L^2_{loc}(\mathbb{R}_+ \times \mathbb{R}^3_{+,-,0})
\end{equation*}
and
\begin{equation*}
\begin{cases}
\frac{d}{dt}v + L v = F (v) \text{ on } (0, \infty),\\
v \vert_{t=0} = v_0.
\end{cases}
\end{equation*}
From Lemma \ref{lem45}, we see that $v$ is a unique global-in-time strong solution of system \eqref{eq41} with initial data $v_0$. Therefore, Theorem \ref{thm41} is proved.
  \end{proof}


\section{Energy Equality}\label{sect5}

In this section, we prove Theorem \ref{thm11}. We first study the uniqueness of the strong solutions to system \eqref{eq11}. Then we apply Theorem \ref{thm41} to show the existence of a global-in-time strong solution to \eqref{eq11}. Finally, we construct an energy equality of our heat system.

\begin{lemma}[Uniqueness]\label{lem51}Let $\kappa_A$, $\kappa_B$, $\tilde{\kappa}_S$, $\alpha_S >0$. Let $\theta_0^A \in W^{1,2} (\mathbb{R}^3_{+})$, $\theta_0^B \in W^{1,2}( \mathbb{R}^3_{-})$, $\theta_0^S \in W^{1,2} (\mathbb{R}^2)$ satisfying $\gamma_{+}[\theta_0^A] = \theta_0^S$ and $\gamma_{-}[\theta_0^B] = \theta_0^S$. Let $T>0$. Let
\footnotesize
\begin{align*}
\theta^\sharp_A, \theta^\flat_A & \in C ([0, \infty); L^2 (\mathbb{R}^3_{+})) \cap L^2 (0, T; W^{2,2} (\mathbb{R}^3_+) ) \cap W^{1,2}(0,T ; L^2 (\mathbb{R}^3_+) ) \cap L^2( (0,T) \times \mathbb{R}^3_{+} ),\\
\theta_B^\sharp, \theta_B^\flat & \in C ([0, \infty); L^2 (\mathbb{R}^3_{-})) \cap L^2 (0, T; W^{2,2} (\mathbb{R}^3_{-}) ) \cap W^{1,2}(0,T ; L^2 (\mathbb{R}^3_{-}) ) \cap L^2 ((0,T) \times \mathbb{R}^3_{-} ),\\
\theta_S^\sharp, \theta_S^\flat & \in C ([0, \infty); L^2 (\mathbb{R}^2)) \cap L^2 (0,T; W^{2,2} (\mathbb{R}^2) ) \cap W^{1,2}(0, T ; L^2 (\mathbb{R}^2) ) \cap L^2((0,T) \times \mathbb{R}^2 ).
\end{align*}\normalsize
Assume that $(\theta_A^\sharp , \theta_B^\sharp , \theta_S^\sharp)$ and $(\theta_A^\flat , \theta_B^\flat , \theta_S^\flat )$ are two strong solution of system \eqref{eq11} with initial data $(\theta_0^A , \theta_0^B, \theta_0^S)$. Then $\theta_A^\sharp = \theta_A^\flat$, $\theta_B^\sharp = \theta_B^\flat$, and $\theta_S^\sharp = \theta_S^\flat$ \text{ on }$[0,T)$.
\end{lemma}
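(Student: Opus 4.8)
The plan is to exploit the linearity of system \eqref{eq11} and to reproduce, directly on the original variables, the energy argument already carried out in the proof of Lemma \ref{lem45}. First I would set $w_A = \theta_A^\sharp - \theta_A^\flat$, $w_B = \theta_B^\sharp - \theta_B^\flat$, and $w_S = \theta_S^\sharp - \theta_S^\flat$. Since \eqref{eq11} is linear in $(\theta_A,\theta_B,\theta_S)$ and since $\alpha_A = \alpha_B = 1$, the triple $(w_A,w_B,w_S)$ solves the homogeneous system $\partial_t w_A = \kappa_A \Delta w_A$ in $\mathbb{R}^3_+ \times (0,T)$, $\partial_t w_B = \kappa_B \Delta w_B$ in $\mathbb{R}^3_- \times (0,T)$, and $\alpha_S \partial_t w_S = \kappa_S \Delta_h w_S + \kappa_A \gamma_{+}[\partial_3 w_A] - \kappa_B \gamma_{-}[\partial_3 w_B]$ in $\mathbb{R}^2 \times (0,T)$, together with the interface matching $\gamma_{+}[w_A] = \gamma_{-}[w_B] = w_S$ and zero initial data. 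The regularity assumed for the two solutions, namely $L^2(0,T;W^{2,2}) \cap W^{1,2}(0,T;L^2)$ in each phase, is inherited by the differences and is exactly what is needed to differentiate the $L^2$-norms in time and to integrate by parts with well-defined traces.

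Next I would compute the time derivative of the weighted energy $E(t) := \Vert w_A(t)\Vert^2_{L^2(\mathbb{R}^3_+)} + \Vert w_B(t)\Vert^2_{L^2(\mathbb{R}^3_-)} + \alpha_S \Vert w_S(t)\Vert^2_{L^2(\mathbb{R}^2)}$. Testing the two bulk equations against $w_A$ and $w_B$, testing the surface equation against $\alpha_S w_S$, and integrating by parts in $\mathbb{R}^3_\pm$ yields $\tfrac12 \tfrac{d}{dt}E(t) = -\kappa_A \Vert \nabla w_A\Vert^2 - \kappa_B \Vert \nabla w_B\Vert^2 - \kappa_S \Vert \nabla_h w_S\Vert^2$ plus boundary contributions on $\mathbb{R}^2$. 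The crucial point is the cancellation of these boundary terms: the half-space integrations produce $-\kappa_A \int_{\mathbb{R}^2} \gamma_{+}[\partial_3 w_A]\,\gamma_{+}[w_A]$ and $+\kappa_B \int_{\mathbb{R}^2} \gamma_{-}[\partial_3 w_B]\,\gamma_{-}[w_B]$, with signs dictated by the outward normals $\mp e_3$ on $\partial \mathbb{R}^3_\pm$, while the surface equation contributes the interface flux $\kappa_A \int_{\mathbb{R}^2} \gamma_{+}[\partial_3 w_A]\,w_S - \kappa_B \int_{\mathbb{R}^2} \gamma_{-}[\partial_3 w_B]\,w_S$. Inserting the matching conditions $\gamma_{+}[w_A] = w_S$ and $\gamma_{-}[w_B] = w_S$ makes all four trace integrals cancel exactly, leaving the clean dissipation identity $\tfrac12 \tfrac{d}{dt}E(t) = -\kappa_A \Vert \nabla w_A\Vert^2 - \kappa_B \Vert \nabla w_B\Vert^2 - \kappa_S \Vert \nabla_h w_S\Vert^2 \le 0$.

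Finally I would integrate this identity in time from $0$ to $t$; since $E(0) = 0$ by the equality of initial data, one obtains $E(t) + 2\kappa_A \int_0^t \Vert \nabla w_A\Vert^2 + 2\kappa_B \int_0^t \Vert \nabla w_B\Vert^2 + 2\kappa_S \int_0^t \Vert \nabla_h w_S\Vert^2 = 0$, which forces $E(t) \equiv 0$ and hence $w_A = w_B = w_S = 0$ on $[0,T)$. I expect the only genuinely delicate points to be the rigorous justification of $\tfrac12 \tfrac{d}{dt}\Vert w_\natural\Vert^2_{L^2} = \langle \partial_t w_\natural, w_\natural\rangle$ within the stated regularity class and of the integrations by parts, together with the verification that no boundary contributions survive at spatial infinity, which is guaranteed by the decay encoded in the solution spaces. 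Once the trace cancellation is set up correctly, which is the structural heart of the three-phase coupling and is identical in spirit to the computation in Lemma \ref{lem45}, the remainder reduces to a monotonicity (no Gronwall even needed) argument.
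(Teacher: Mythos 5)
Your proposal is correct and follows essentially the same route as the paper: the paper also forms the differences, observes they solve the homogeneous system with zero initial data, and then invokes the weighted energy argument of Lemma \ref{lem45} (testing against $w_A$, $w_B$, $\alpha_S w_S$, integrating by parts, and using the matching conditions $\gamma_{+}[w_A]=\gamma_{-}[w_B]=w_S$ to cancel the interface flux terms) to conclude $E(t)\le 0$ and hence $E\equiv 0$. Your write-up merely makes explicit the trace cancellation that the paper leaves implicit by reference to Lemma \ref{lem45}.
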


\begin{proof}[Proof of Lemma \ref{lem51}]
Set $(\theta_A^*, \theta_B^*, \theta_S^*) = (\theta_A^\sharp - \theta_A^\flat , \theta_B^\sharp - \theta_B^\flat , \theta_S^\sharp - \theta_S^\flat)$. Then we have
\begin{equation*}
\begin{cases}
\partial_t \theta^*_A = \kappa_A \Delta \theta^*_A & \text{ in } \mathbb{R}^3_+ \times (0, T) ,\\
\partial_t \theta^*_B = \kappa_B \Delta \theta^*_B & \text{ in } \mathbb{R}^3_{-} \times (0, T ),\\
\alpha_S \partial_t \theta^*_S = \tilde{\kappa}_S \alpha_S \Delta_h \theta^*_S + \kappa_A \gamma_{+}[ \partial_3 \theta^*_A] - \kappa_B \gamma_{-}[ \partial_3 \theta^*_B] & \text{ in } \mathbb{R}^2 \times (0, T ),\\
\gamma_{+} [ \theta^*_A] = \gamma_{-} [\theta^*_B] = \theta^*_S & \text{ in } \mathbb{R}^2 \times (0, T),\\
\theta^*_A \vert_{t=0} = 0{ \ } \text{ in } \mathbb{R}^3_+, { \ }\theta^*_B \vert_{t = 0} = 0{ \ } \text{ in }\mathbb{R}^3_{-}, { \ }\theta^*_S \vert_{t=0} = 0{ \ } \text{ in } \mathbb{R}^2.
\end{cases}
\end{equation*}
By an argument similar to prove Lemma \ref{lem45}, we see that for all $0 \leq t <T$
\begin{equation*}
\Vert \theta^*_A (t) \Vert^2_{L^2 (\mathbb{R}^3_{+})} + \Vert \theta^*_B (t) \Vert^2_{L^2 (\mathbb{R}^3_{-})} + \alpha_S \Vert \theta^*_S (t) \Vert^2_{L^2 (\mathbb{R}^2)} \leq 0.
\end{equation*}
Therefore, we see that $(\theta_A^*, \theta_B^* , \theta_S^* ) = (0,0,0 )$ on $[0,T)$, that is, $\theta^\sharp = \theta^\flat$.
  \end{proof}

Let us prove Theorem \ref{thm11}.
\begin{proof}[Proof of Theorem \ref{thm11}]
Let $\kappa_A$, $\kappa_B$, $\tilde{\kappa}_S$, $\alpha_S$, $\beta >0$. Assume that $\alpha_S > \alpha_0$ and $\beta > \beta_0$. Let $\theta_0^A \in W^{1,2} (\mathbb{R}^3_{+})$, $\theta_0^B \in W^{1,2}( \mathbb{R}^3_{-})$, $\theta_0^S \in W^{1,2} (\mathbb{R}^2)$ satisfying $\gamma_{+}[\theta_0^A] = \theta_0^S$ and $\gamma_{-}[\theta_0^B] = \theta_0^S$.  Set $v_0^A = v_0^A (x) := \theta_0^A - \theta_0^S {\rm{e}}^{- \beta x_3} $, $v_0^B = v_0^B (x) := \theta_0^A - \theta_0^S {\rm{e}}^{\beta x_3}$, and $v_0^S = v_0^S (x_h) = \theta_0^S$. It is clear that $v_0^A \in W_0^{1,2} ( \mathbb{R}^3_+)$, $v_0^B \in W^{1,2}_0 ( \mathbb{R}^3_-)$, and $v_0^S \in W^{1,2} (\mathbb{R}^2)$. Set $v_0 = { }^t (v_0^A , v_0^B ,v_0^S) $. Since $v_0 \in D (L^{1/2})$, it follows from Theorem \ref{thm41} to see that system \eqref{eq41} with initial data $v_0$ admits a unique global-in-time strong solution $v= { }^t (v_A , v_B,v_S)$ in
\begin{equation*}
C ([0,\infty) ; H ) \cap L^2_{loc} (0,\infty; D(L)) \cap W_{loc}^{1,2} (0,\infty ; H) \cap L^2_{loc} ( \mathbb{R}_+ \times \mathbb{R}^3_{+,-,0}),
\end{equation*}
satisfying
\begin{equation}\label{eq51}
\lim_{t \to 0 + 0} v (t) = v_0 \text{ in }H. 
\end{equation}
Set
\begin{equation*}
\begin{cases}
\theta_A = \theta_A (x,t) := v_A (t) + v_S (t) {\rm{e}}^{- \beta x_3},\\
\theta_B = \theta_B (x,t) := v_B (t) + v_S (t) {\rm{e}}^{\beta x_3},\\
\theta_S = \theta_S (x,t) := v_S (t). 
\end{cases}
\end{equation*}
We easily check that
\footnotesize
\begin{align*}
\theta_A & \in C ([0, \infty); L^2 (\mathbb{R}^3_{+})) \cap L_{loc}^2 (0,\infty; W^{2,2}(\mathbb{R}^3_+)) \cap W_{loc}^{1,2}(0,\infty ; L^2(\mathbb{R}^3_+)) \cap L^2_{loc}( \mathbb{R}_+ \times \mathbb{R}^3_{+}),\\
\theta_B & \in C ([0, \infty); L^2 (\mathbb{R}^3_{-})) \cap L_{loc}^2 (0,\infty; W^{2,2}(\mathbb{R}^3_{-})) \cap W_{loc}^{1,2}(0,\infty ; L^2(\mathbb{R}^3_{-})) \cap L^2_{loc}(\mathbb{R}_+ \times \mathbb{R}^3_{-}),\\
\theta_S & \in C ([0, \infty); L^2 (\mathbb{R}^2)) \cap L_{loc}^2 (0,\infty; W^{2,2}(\mathbb{R}^2)) \cap W_{loc}^{1,2}(0,\infty ; L^2(\mathbb{R}^2)) \cap L_{loc}^2 (\mathbb{R}_+ \times \mathbb{R}^2),
\end{align*}\normalsize 
and $(\theta_A , \theta_B, \theta_S)$ satisfy \eqref{eq11}.

Now we prove that
\begin{align}
\lim_{ t \to 0 +0} \Vert \theta_A (t) - \theta_0^A \Vert_{L^2 ( \mathbb{R}^3_{+})} = 0, \label{eq52}\\
\lim_{ t \to 0 +0} \Vert \theta_B (t) - \theta_0^B \Vert_{L^2 ( \mathbb{R}^3_{-})} = 0, \label{eq53}\\
\lim_{ t \to 0 +0} \Vert \theta_S (t) - \theta_0^S \Vert_{L^2 ( \mathbb{R}^2)} = 0. \label{eq54}
\end{align}
From \eqref{eq51}, we have
\begin{equation*}
\lim_{t \to 0 + 0} ( \Vert v_A (t) - v^A_0 \Vert_{L^2(\mathbb{R}^3_+)} + \Vert v_B (t) - v_0^B \Vert_{L^2(\mathbb{R}^3_-)} + \Vert v_S (t) - v_0^S \Vert_{L^2(\mathbb{R}^2)}  ) = 0.
\end{equation*}
This gives \eqref{eq54}. Since $\theta_A = v_A + v_S {\rm{e}}^{- \beta x_3}$, $\theta_S = v_S$, $v_0^A = \theta_0^A - \theta_0^A {\rm{e}}^{- \beta x_3}$, and $v_0^S = \theta_0^S$, we use Lemma \ref{lem24} to check that
\begin{align*}
\Vert \theta_A (t) - \theta_0^A \Vert_{L^2 ( \mathbb{R}^3_{+})} = & \Vert v_A (t) + v_S(t) {\rm{e}}^{- \beta x_3} - (\theta_0^A - \theta_0^S {\rm{e}}^{- \beta x_3}) + \theta_0^S {\rm{e}}^{- \beta x_3} \Vert_{L^2 (\mathbb{R}^3_{+})}\\
= & \Vert v_A (t) - v_0^A \Vert_{L^2 ( \mathbb{R}^3_{+})} + \Vert v_S (t) {\rm{e}}^{- \beta x_3} - v_0^S {\rm{e}}^{- \beta x_3} \Vert_{L^2 (\mathbb{R}^3_{+})}\\
\leq & \Vert v_A (t) - v_0^A \Vert_{L^2 ( \mathbb{R}^3_{+})} + \frac{1}{\sqrt{2 \beta}} \Vert \theta_S (t) - \theta_0^S \Vert_{L^2 (\mathbb{R}^2)}\\
\leq & C \Vert v (t) - v_0 \Vert_H \to 0 \text{ as } t \to 0 + 0 .
\end{align*}
Therefore, we have \eqref{eq52}. Similarly, we see \eqref{eq53}.

Finally, we construct an energy equality for our system. By an argument similar to prove Lemma \ref{lem45}, we see that for all $0 \leq t_1 \leq t_2 < \infty$, \eqref{eq13} holds. Therefore, Theorem \ref{thm11} is proved.
  \end{proof}


\section{Appendix (I): Derivation of Heat Equations in $\mathbb{R}^3_+$, $\mathbb{R}^3_-$ with $\mathbb{R}^3_0$}\label{sect6}
In Appendix (I), we derive the heat equations \eqref{eq11} in the two half spaces $\mathbb{R}^3_+$, $\mathbb{R}^3_-$, and the interface $\mathbb{R}^2 \times \{ 0 \} (\cong \mathbb{R}^2)$ by applying a method in \cite{K23}. We consider the temperatures of a simple ocean-atmosphere model with an interface or oil floating on water (see Figure \ref{Fig1}).

Let us first introduce our settings. Let $T \in (0, \infty]$. Define $\Omega_{A,T} = \mathbb{R}^3_{+} \times (0,T)$, $\Omega_{B,T} = \mathbb{R}^3_{-} \times (0,T) $, $\overline{\Omega}_{A,T} = \overline{ \mathbb{R}^3_{+}} \times (0,T)$, $\overline{\Omega}_{B,T} = \overline{ \mathbb{R}^3_{-}} \times (0,T)$, and $\Omega_{S,T} = \mathbb{R}^2 \times (0,T)$. For $\sharp = A , B$, let $\rho_\sharp = \rho_\sharp (x,t)$, $v_\sharp = v_\sharp (x,t) = { }^t (v^\sharp_1, v^\sharp_2 , v^\sharp_3)$, $\theta_\sharp = \theta_\sharp (x,t)$, $\kappa_\sharp = \kappa_\sharp (x,t)$, and $\mathcal{C}_\sharp = \mathcal{C}_\sharp (x,t)$ be the \emph{density}, the \emph{velocity}, the \emph{temperature}, the \emph{thermal conductivity}, and the \emph{specific heat} of the fluid in $\Omega_{\sharp, T}$, respectively. Let $\rho_S = \rho_S (x_h,t)$, $v_S = v_S (x_h,t) = { }^t (v^S_1, v^S_2)$, $\theta_S = \theta_S (x_h,t)$, $\kappa_S = \kappa_S (x_h,t)$, and $\mathcal{C}_S = \mathcal{C}_S (x_h,t)$ be the \emph{density}, the \emph{velocity}, the \emph{temperature}, the \emph{thermal conductivity}, and the \emph{specific heat} of the fluid in $\mathbb{R}^3_0 \times (0,T) (\cong \Omega_{S, T})$, respectively. Define
\begin{align*}
& C_0^\infty ( \overline{ \mathbb{R}^3_{+} }) := \{ f \in C^\infty (\overline{ \mathbb{R}^3_{+}  }); { \ }\lim_{x \in \overline{\mathbb{R}^3_{+}},\vert x \vert \to \infty} \vert f (x) \vert = 0 \},\\
& C_0^\infty ( \overline{ \mathbb{R}^3_{-} }) := \{ f \in C^\infty (\overline{ \mathbb{R}^3_{-}  }); { \ }\lim_{x \in \overline{\mathbb{R}^3_{-}},\vert x \vert \to \infty} \vert f (x) \vert = 0 \},\\
& C_0^\infty ( \overline{\Omega}_{A,T}) := \{ f \in C^\infty (\overline{\Omega }_{A,T}); { \ }\lim_{x \in \overline{\mathbb{R}^3_{+}},\vert x \vert \to \infty} \vert f (x,t) \vert = 0 \text{ for each } 0 < t <T \},\\
& C_0^\infty ( \overline{ \Omega }_{B,T}) := \{ f \in C^\infty (\overline{ \Omega }_{B,T}); { \ }\lim_{x \in \overline{\mathbb{R}^3_{-}},\vert x \vert \to \infty} \vert f (x,t) \vert = 0 \text{ for each } 0 < t <T \},\\
& C_0^\infty ( \Omega_{S,T} ) := \{ f \in C^\infty (\Omega_{S,T}); { \ }\lim_{x_h \in \mathbb{R}^2,\vert x_h \vert \to \infty} \vert f (x_h, t) \vert = 0 \text{ for each } 0 < t <T \}.
\end{align*}
We assume that $\rho_A$, $\mathcal{C}_A \in C^\infty (\overline{\Omega}_{A,T})$, $v_1^A$, $v_2^A$, $v_3^A$, $\theta_A \in C_0^\infty ( \overline{\Omega}_{A,T})$, $\rho_B$, $\mathcal{C}_B \in C^\infty (\overline{\Omega}_{B,T})$, $v_1^B$, $v_2^B$, $v_3^B$, $\theta_B \in C_0^\infty ( \overline{ \Omega }_{B,T})$, $\rho_S$, $\mathcal{C}_S \in C^\infty (\Omega_{S,T})$, $v_1^S$, $v_2^S$, $\theta_S \in C_0^\infty ( \Omega_{S,T})$, and that $\kappa_A$, $\kappa_B$, $\kappa_S$ are three positive constants.

\begin{definition}[Velocity fields, Transport theorems]\label{def61}{ \ }\\ We say that $(\Omega_{A,T}, \Omega_{B,T}, \Omega_{S,T})$ is \emph{flowed by the velocity fields} $(v_A,v_B ,v_S)$ if for each $0< t <T$, $\varphi_A \in C^1 (\Omega_{A,T})$, $\varphi_B \in C^1(\Omega_{B,T})$, $\varphi_S \in C^1 (\Omega_{S,T})$, and $\Lambda \subset \mathbb{R}^3$,
\begin{align*}
 \frac{d}{d t} \int_{\mathbb{R}^3_{+} \cap \Lambda} \varphi_A (x,t) { \ }d x & = \int_{\mathbb{R}^3_{+} \cap \Lambda}\{ \partial_t \varphi_A + (v_A \cdot \nabla) \varphi_A + (\nabla \cdot v_A ) \varphi_A \} { \ }dx,\\ 
 \frac{d}{d t} \int_{\mathbb{R}^3_{-} \cap \Lambda} \varphi_B (x,t) { \ }d x & = \int_{\mathbb{R}^3_{-} \cap \Lambda} \{ \partial_t \varphi_B + (v_B \cdot \nabla) \varphi_B + (\nabla \cdot v_B ) \varphi_B \} { \ }dx,\\ 
 \frac{d}{d t} \int_{\mathbb{R}^2 \cap \Lambda} \varphi_S (x_h,t) { \ }d x_h & = \int_{\mathbb{R}^2 \cap \Lambda} \{ \partial_t \varphi_S + (v_S \cdot \nabla_h) \varphi_S + (\nabla_h \cdot v_S ) \varphi_S \} { \ }d x_h. 
\end{align*}
\noindent We often call the above three equalities the \emph{transport theorems}.
\end{definition}
Throughout Appendix (I), we assume that $(\Omega_{A,T}, \Omega_{B,T}, \Omega_{S,T})$ is \emph{flowed by the velocity fields} $(v_A,v_B ,v_S)$. From the transport theorems, we admit that the densities $(\rho_A, \rho_B, \rho_S)$ satisfies
\begin{equation}\label{eq61}
\begin{cases}
\partial_t \rho_A + (v_A \cdot \nabla) \rho_A + (\nabla \cdot v_A ) \rho_A = 0 & \text{ in } \mathbb{R}^3_{+} \times (0, T),\\
\partial_t \rho_B + (v_B \cdot \nabla) \rho_B + (\nabla \cdot v_B ) \rho_B = 0 & \text{ in } \mathbb{R}^3_{-} \times (0, T),\\
\partial_t \rho_S + (v_S \cdot \nabla_h) \rho_S + (\nabla_h \cdot v_S ) \rho_S = 0 & \text{ in } \mathbb{R}^2 \times (0, T).
\end{cases}
\end{equation}

Next, we consider the variation of energies dissipation due to thermal diffusion. Set
\begin{equation*}
E_{TD}[\theta_A , \theta_B , \theta_S] = - \int_{\mathbb{R}^3_{+}} \frac{\kappa_A}{2} \vert \nabla \theta_A \vert^2 d x - \int_{\mathbb{R}^3_{-}} \frac{\kappa_B}{2} \vert \nabla \theta_B \vert^2 d x - \int_{\mathbb{R}^2} \frac{\kappa_S}{2} \vert \nabla_h  \theta_S \vert^2 d x_h.
\end{equation*}
Under the restriction that
\begin{equation}\label{eq62}
\theta_A \vert_{x_3=0} = \theta_B \vert_{x_3 =0} = \theta_S \text{ in }\mathbb{R}^2 \times (0,T),
\end{equation}
we consider the variation of the dissipation energies $E_{TD}$. Fix $t$. For $- 1 < \varepsilon <1$, $\phi_A \in C_0^\infty (\overline{ \mathbb{R}^3_+ })$, $\phi_B \in C_0^\infty ( \overline{ \mathbb{R}^3_{-} } )$, $\phi_S \in C_0^\infty ( \mathbb{R}^2)$, set $\theta_A^\varepsilon = \theta_A + \varepsilon \phi_A$, $\theta_B^\varepsilon = \theta_B + \varepsilon \theta_B$, $\theta_S^\varepsilon = \theta_S + \varepsilon \phi_S$. From \eqref{eq62}, we assume that for $- 1 < \varepsilon < 1$
\begin{equation*}
\theta^\varepsilon_A \vert_{x_3=0} = \theta^\varepsilon_B \vert_{x_3 =0} = \theta^\varepsilon_S \text{ in } \mathbb{R}^2.
\end{equation*}
Then we have
\begin{equation}\label{eq63}
\phi_A \vert_{x_3=0} = \phi_B \vert_{x_3 =0} = \phi_S \text{ in }\mathbb{R}^2.
\end{equation}
A direct calculation gives
\begin{multline*}
\frac{d}{d \varepsilon} \bigg\vert_{\varepsilon = 0} E_{TD}[\theta_A^\varepsilon , \theta_B^\varepsilon , \theta_S^\varepsilon] = - \int_{\mathbb{R}^3_{+}} \kappa_A \nabla \theta_A \cdot \nabla \phi_A { \ }d x - \int_{\mathbb{R}^3_{-}} \kappa_B \nabla \theta_B \cdot \nabla \phi_B { \ }d x\\ - \int_{\mathbb{R}^2} \kappa_S \nabla_h  \theta_S \cdot \nabla_h \phi_S { \ }d x_h.
\end{multline*}
Using integration by parts with \eqref{eq63}, we see that
\begin{multline*}
\frac{d}{d \varepsilon} \bigg\vert_{\varepsilon = 0} E_{TD}[\theta_A^\varepsilon , \theta_B^\varepsilon , \theta_S^\varepsilon] = \int_{\mathbb{R}^3_{+}} (\kappa_A \Delta \theta_A ) \phi_A { \ }d x + \int_{\mathbb{R}^3_{-}} ( \kappa_B \Delta \theta_B ) \phi_B {  \ } d x\\ + \int_{\mathbb{R}^2} ( \kappa_S \Delta_h \theta_S + \kappa_A \partial_3 \theta_A\vert_{x_3 =0} - \kappa_B \partial_3 \theta_B\vert_{x_3 =0} ) \phi_S { \ }d x_h.
\end{multline*}
Therefore, we have the following proposition.
\begin{proposition}\label{prop62}
Fix $t$. Let $q_A, q_B,q_S \in C (\mathbb{R}^3)$. Assume that for every $\phi_A \in C_0^\infty (\overline{\mathbb{R}^3_+})$, $\phi_B \in C_0^\infty (\overline{ \mathbb{R}^3_-})$, $\phi_S \in C_0^\infty (\mathbb{R}^2)$ satisfying \eqref{eq63},
\begin{equation*}
\frac{d}{d \varepsilon} \bigg\vert_{\varepsilon = 0} E_{TD}[\theta_A^\varepsilon , \theta_B^\varepsilon , \theta_S^\varepsilon] = \int_{\mathbb{R}^3_{+}} q_A \phi_A { \ }d x + \int_{\mathbb{R}^3_{-}} q_B \phi_B { \ }d x + \int_{\mathbb{R}^2} q_S \phi_S { \ }d x_h.
\end{equation*}
Then
\begin{equation*}
\begin{cases}
q_A = \kappa_A \Delta \theta_A & \text{ in }\mathbb{R}^3_{+},\\
q_B = \kappa_B \Delta \theta_B & \text{ in } \mathbb{R}^3_{-},\\
q_S = \kappa_S \Delta_h \theta_S + \kappa_A \partial_3 \theta_A \vert_{x_3 = 0} - \kappa_B \partial_3 \theta_B \vert_{x_3 =0} & \text{ in } \mathbb{R}^2.
\end{cases}
\end{equation*}
\end{proposition}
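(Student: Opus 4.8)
The plan is to equate the two representations of the first variation and then invoke the fundamental lemma of the calculus of variations three times, taking care of the coupling \eqref{eq63} between the test functions by localizing first in each open half space. Combining the explicit computation of $\frac{d}{d\varepsilon}\vert_{\varepsilon=0}E_{TD}[\theta_A^\varepsilon,\theta_B^\varepsilon,\theta_S^\varepsilon]$ carried out immediately above the statement with the hypothesis on $(q_A,q_B,q_S)$, one sees that for every admissible triple $(\phi_A,\phi_B,\phi_S)$ satisfying \eqref{eq63},
\[
\int_{\mathbb{R}^3_+}(q_A-\kappa_A\Delta\theta_A)\phi_A\,dx+\int_{\mathbb{R}^3_-}(q_B-\kappa_B\Delta\theta_B)\phi_B\,dx+\int_{\mathbb{R}^2}\big(q_S-\kappa_S\Delta_h\theta_S-\kappa_A\partial_3\theta_A\vert_{x_3=0}+\kappa_B\partial_3\theta_B\vert_{x_3=0}\big)\phi_S\,dx_h=0.
\]

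Next I would recover the two bulk identities. Choosing $\phi_B\equiv0$, $\phi_S\equiv0$ and letting $\phi_A$ range over $C_0^\infty(\mathbb{R}^3_+)$ (functions compactly supported in the \emph{open} upper half space), the constraint \eqref{eq63} is satisfied automatically, since all three traces then vanish, and the identity reduces to $\int_{\mathbb{R}^3_+}(q_A-\kappa_A\Delta\theta_A)\phi_A\,dx=0$ for all such $\phi_A$. The fundamental lemma yields $q_A=\kappa_A\Delta\theta_A$ in $\mathbb{R}^3_+$. Symmetrically, taking $\phi_A\equiv0$, $\phi_S\equiv0$ and $\phi_B\in C_0^\infty(\mathbb{R}^3_-)$ gives $q_B=\kappa_B\Delta\theta_B$ in $\mathbb{R}^3_-$.

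Finally I would establish the surface identity. With the two bulk terms now known to vanish, the displayed equality collapses to $\int_{\mathbb{R}^2}\big(q_S-\kappa_S\Delta_h\theta_S-\kappa_A\partial_3\theta_A\vert_{x_3=0}+\kappa_B\partial_3\theta_B\vert_{x_3=0}\big)\phi_S\,dx_h=0$ whenever $(\phi_A,\phi_B,\phi_S)$ obeys \eqref{eq63}. The key point is that every $\phi_S\in C_0^\infty(\mathbb{R}^2)$ is attainable as the common boundary trace: given $\phi_S$, fix a cutoff $\chi\in C_0^\infty(\mathbb{R})$ with $\chi(0)=1$ and set $\phi_A=\phi_B=\phi_S(x_h)\chi(x_3)$, restricted to the respective half spaces. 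Then $\phi_A\in C_0^\infty(\overline{\mathbb{R}^3_+})$, $\phi_B\in C_0^\infty(\overline{\mathbb{R}^3_-})$, and $\phi_A\vert_{x_3=0}=\phi_B\vert_{x_3=0}=\phi_S$, so \eqref{eq63} holds. Hence the surface integral vanishes for arbitrary $\phi_S\in C_0^\infty(\mathbb{R}^2)$, and a last application of the fundamental lemma delivers $q_S=\kappa_S\Delta_h\theta_S+\kappa_A\partial_3\theta_A\vert_{x_3=0}-\kappa_B\partial_3\theta_B\vert_{x_3=0}$ in $\mathbb{R}^2$.

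The only genuine subtlety is the coupling \eqref{eq63}: the three test functions are not independent, so one cannot read off all three equations simultaneously from a single free variation. The remedy, which is the crux of the argument, is the two-stage localization—first use interior test functions, which satisfy \eqref{eq63} trivially because their traces are zero, to pin down $q_A$ and $q_B$, and only afterwards exploit the freedom to prescribe the common trace $\phi_S$ arbitrarily via an explicit extension into the two half spaces.
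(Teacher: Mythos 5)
Your proposal is correct and follows essentially the same two-stage argument as the paper: first take $\phi_S\equiv 0$ with interior test functions to identify $q_A$ and $q_B$, then use the bulk identities to reduce to the surface integral and vary $\phi_S$ freely. The only difference is that you make explicit the extension $\phi_A=\phi_B=\phi_S(x_h)\chi(x_3)$ showing every $\phi_S$ is attainable as a common trace, a point the paper leaves implicit.
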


\begin{proof}[Proof of Proposition \ref{prop62}]
Fix $t$. Let $q_A, q_B,q_S \in C (\mathbb{R}^3)$. We first consider the case when $\phi_S \equiv 0$ in $\mathbb{R}^2$. By assumption, we see that for all $\phi_A \in C_0^\infty (\mathbb{R}^3_+)$, $\phi_B \in C_0^\infty ( \mathbb{R}^3_-)$,
\begin{multline*}
\int_{\mathbb{R}^3_{+}} (\kappa_A \Delta \theta_A ) \phi_A { \ }d x + \int_{\mathbb{R}^3_{-}} ( \kappa_B \Delta \theta_B ) \phi_B {  \ } d x = \int_{\mathbb{R}^3_{+}} q_A \phi_A { \ }d x + \int_{\mathbb{R}^3_{-}} q_B \phi_B { \ }d x.
\end{multline*}
This implies that $q_A = \kappa_A \Delta \theta_A$ in $\mathbb{R}^3_+$ and $q_B = \kappa_B \Delta \theta_B$ in $\mathbb{R}^3_-$.

Next we consider the case when $\phi_S \in C_0^\infty (\mathbb{R}^2)$. Let $\phi_A \in C_0^\infty (\overline{\mathbb{R}^3_+})$, $\phi_B \in C_0^\infty (\overline{ \mathbb{R}^3_-})$, $\phi_S \in C_0^\infty (\mathbb{R}^2)$ satisfying \eqref{eq63}. Since $q_A = \kappa_A \Delta \theta_A$ in $\mathbb{R}^3_+$ and $q_B = \kappa_B \Delta \theta_B$ in $\mathbb{R}^3_-$, we find that 
\begin{equation*}
 \int_{\mathbb{R}^2} ( \kappa_S \Delta_h \theta_S + \kappa_A \partial_3 \theta_A\vert_{x_3 =0} - \kappa_B \partial_3 \theta_B\vert_{x_3 =0} ) \phi_S { \ }d x_h = \int_{\mathbb{R}^2} q_S \phi_S { \ }d x_h.
\end{equation*}
Since the above equation holds for all $\phi_S \in C_0^\infty (\mathbb{R}^2)$, we see that $q_S = \kappa_S \Delta_h \theta_S + \kappa_A \partial_3 \theta_A \vert_{x_3=0} - \kappa_B \partial_3 \theta_B \vert_{x_3=0}$. Therefore, Proposition \ref{prop62} is proved.
\end{proof}

From Proposition \ref{prop62} we set
\begin{equation}\label{eq64}
\begin{cases}
Q_A = Q_A(x,t) = \kappa_A \Delta \theta_A,\\
Q_B = Q_B (x,t) = \kappa_B \Delta \theta_B,\\
Q_S = Q_S ( x_h , t) = \kappa_S \Delta_h \theta_S + \kappa_A \partial_3 \theta_A \vert_{x_3 =0} - \kappa_B \partial_3 \theta_B \vert_{x_3 =0}.
\end{cases}
\end{equation}

Now we assume that the time rate of change of the heat energies equals to the forces derived from the variation of energies dissipation due to thermal diffusions, that is, suppose that for every $0<t<T$ and $\Lambda \subset \mathbb{R}^3$,
\begin{align*}
\frac{d}{d t}\int_{\mathbb{R}_{+}^3 \cap \Lambda} \rho_A \mathcal{C}_A \theta_A { \ }d x = \int_{\mathbb{R}_{+}^3 \cap \Lambda} Q_A { \ }d x,\\
\frac{d}{d t}\int_{\mathbb{R}_{-}^3 \cap \Lambda} \rho_B \mathcal{C}_B \theta_B { \ }d x = \int_{\mathbb{R}_{-}^3 \cap \Lambda} Q_B { \ }d x,\\
\frac{d}{d t}\int_{\mathbb{R}^2 \cap \Lambda} \rho_S \mathcal{C}_S \theta_S { \ }d x_h = \int_{\mathbb{R}^2 \cap \Lambda} Q_S { \ }d x_h.
\end{align*}
Then we apply the transport theorems to derive
\begin{equation}\label{eq65}
\begin{cases}
\rho_A \partial_t ( \mathcal{C}_A \theta_A) + \rho_A (v_A \cdot \nabla) (\mathcal{C}_A \theta_A) = Q_A & \text{ in } \mathbb{R}^3_{+} \times (0, T),\\
\rho_B \partial_t ( \mathcal{C}_B \theta_B ) + \rho_B (v_B \cdot \nabla) (\mathcal{C}_B \theta_B) = Q_B & \text{ in } \mathbb{R}^3_{-} \times (0, T),\\
\rho_S \partial_t ( \mathcal{C}_S \theta_S) + \rho_S (v_S \cdot \nabla_h) ( \mathcal{C}_S \theta_S) = Q_S & \text{ in } \mathbb{R}^2 \times (0, T).
\end{cases}
\end{equation}
We assume that $v_A \equiv { }^t (0,0,0)$, $v_B \equiv { }^t (0,0,0)$, $v_S \equiv { }^t (0,0)$, $\mathcal{C}_\sharp$, $\rho_\sharp$ are positive constants, and that $\rho_\sharp \mathcal{C}_\sharp \equiv \alpha_\sharp$ for some $\alpha_\sharp \in \mathbb{R}_+$. Combining \eqref{eq64}, \eqref{eq65}, and \eqref{eq62}, we have
\begin{equation*}
\begin{cases}
\alpha_A \partial_t \theta_A = \kappa_A \Delta \theta_A & \text{ in } \mathbb{R}^3_{+} \times (0, T),\\
\alpha_B \partial_t \theta_B = \kappa_B \Delta \theta_B & \text{ in } \mathbb{R}^3_{-} \times (0, T),\\
\alpha_S \partial_t \theta_S = \kappa_S \Delta_h \theta_S + \kappa_A \partial_3 \theta_A \vert_{x_3=0} - \kappa_B \partial_3 \theta_B \vert_{x_3 =0} & \text{ in } \mathbb{R}^2 \times (0, T),\\
\theta_A \vert_{x_3 =0} = \theta_B \vert_{x_3 =0} = \theta_S & \text{ in }\mathbb{R}^2 \times (0, T).
\end{cases}
\end{equation*}
Therefore, we have our heat equations \eqref{eq11}.

\section{Appendix (II): H\"{o}lder Continuity}\label{sect7}

In Appendix (II), we derive \eqref{eq23} in Lemma \ref{lem23}. Let $0 \leq q \leq 1$. Since ${\rm{e}}^{- t \mathcal{L}}$ is a bounded analytic semigroup on $\mathcal{H}$, we see that there is $C = C (q,T) >0$ such that for all $\phi_1 \in \mathcal{H}$, $\phi_2 \in D (\mathcal{L}^q)$, and $0 < t < T$,
\begin{align}
\Vert \mathcal{L}^q {\rm{e}}^{- t \mathcal{L}} \phi_1 \Vert_{\mathcal{H}} & \leq C t^{-q} \Vert \phi_1 \Vert_{\mathcal{H}},\label{eq71}\\
\Vert ({\rm{e}}^{- t \mathcal{L}} -  1) \phi_2 \Vert_{\mathcal{H}} & \leq C t^q \Vert \mathcal{L}^q \phi_2 \Vert_{\mathcal{H}}.\label{eq72}
\end{align}
See \cite[Theorem 6.13 in Chapter II]{Paz83} for the derivations of \eqref{eq71} and \eqref{eq72}.
\begin{proof}[Proof of Lemma \ref{lem23}]

Let $T \in (0,\infty)$, $0 < \eta \leq 1/2$, $V_0 \in D (\mathcal{L}^{1/2})$, and $F \in C^{\eta}_{loc}((0,T); \mathcal{H}) \cap L^2(0,T; \mathcal{H})$. Fix $\varepsilon,T_0 >0$ such that $\varepsilon <T_0 < T$. Let $t_1,t_2 >0$ such that $\varepsilon \leq t_1 \leq t_2 \leq T_0$.
From
\begin{align*}
& V (t_2) = {\rm{e}}^{- t_2 \mathcal{L}} V_0 + \int_0^{t_2} {\rm{e}}^{- ( t_2 - \tau ) \mathcal{L} } \mathcal{F} (\tau ) { \ }d \tau,\\
& V (t_1) = {\rm{e}}^{- t_1 \mathcal{L}} V_0 + \int_0^{t_1} {\rm{e}}^{- ( t_1 - \tau ) \mathcal{L} } \mathcal{F} (\tau ) { \ }d \tau,
\end{align*}
we have
\begin{multline*}
V (t_2) - V (t_1) = \{ {\rm{e}}^{- ( t_2 - t_1 ) \mathcal{L}} - 1 \} {\rm{e}}^{- t_1 \mathcal{L}}V_0\\
+ \int_{t_1}^{t_2} {\rm{e}}^{- ( t_2 - \tau ) \mathcal{L}} \mathcal{F} ( \tau ) { \ }d \tau + \int_0^{t_1} \{ {\rm{e}}^{- ( t_2 - t_1 ) \mathcal{L}} - 1 \}{\rm{e}}^{-( t_1 - \tau ) \mathcal{L}} \mathcal{F} (\tau ) { \ } d \tau
\end{multline*}
and
\begin{equation*}
\mathcal{L} V (t_2) - \mathcal{L} V (t_1) = P_1 (t_2,t_1) + P_2 (t_2,t_1) + P_3 (t_2,t_1) + P_4 (t_2,t_1) + P_5 (t_2,t_1).
\end{equation*}
Here
\begin{align*}
P_1 = P_1 (t_1,t_2) &:= ( {\rm{e}}^{- (t_2 -t_1 ) \mathcal{L} } - 1 ) \mathcal{L} {\rm{e}}^{- t_1 \mathcal{L}} V_0,\\
P_2 = P_2 (t_1 , t_2 ) &:= \mathcal{L} \int_{t_1}^{t_2} {\rm{e}}^{- (t_2 - \tau ) \mathcal{L}}\{ \mathcal{F} (\tau ) - \mathcal{F} (t_2) \} { \ }d \tau,\\
P_3 = P_3 (t_1 , t_2 ) &:= \mathcal{L} \int_0^{t_1} \{ {\rm{e}}^{- ( t_2 - t_1 ) \mathcal{L}} - 1 \} {\rm{e}}^{- ( t_1 - \tau ) \mathcal{L}} \{ \mathcal{F} (\tau ) - \mathcal{F} (t_1) \} { \ } d \tau ,\\
P_4 = P_4 (t_1 , t_2) &:= \mathcal{L} \int_{t_1}^{t_2} {\rm{e}}^{- (t_2 - \tau ) \mathcal{L}} \mathcal{F} (t_2) { \ }d \tau,\\
P_5 = P_5 (t_1 , t_2) &:= \mathcal{L} \int_0^{t_1} {\rm{e}}^{- (t_1 - \tau ) \mathcal{L}} \{ {\rm{e}}^{- (t_2 - t_1 ) \mathcal{L}} - 1  \} \mathcal{F} (t_1) { \ }d \tau.
\end{align*}
By \eqref{eq72}, we see that
\begin{align*}
\Vert P_1 \Vert_{\mathcal{H}} & \leq C (t_2 - t_1 )^{1/2} \Vert \mathcal{L} {\rm{e}}^{- t_1 \mathcal{L}} \mathcal{L}^{1/2} V_0 \Vert_{\mathcal{H}}\\
& \leq \frac{C(t_2 - t_1)^{1/2}}{\varepsilon} \Vert \mathcal{L}^{1/2} V_0 \Vert_{\mathcal{H}}.
\end{align*}
Since $\mathcal{F} \in C^{\eta}_{loc}((0,T); \mathcal{H})$, we apply \eqref{eq71} to check that
\begin{equation*}
\Vert P_2 \Vert_{\mathcal{H}} \leq \int_{t_1}^{t_2} C \frac{(t_2 - \tau )^\eta}{(t_2 - \tau )} { \ }d \tau \leq C (t_2 - t_1)^{\eta}
\end{equation*}
and that
\begin{align*}
\Vert P_3 \Vert_{\mathcal{H}} & \leq C (t_2 - t_1)^{\eta/2} C \left\Vert \mathcal{L}^{1+ \eta/2} {\rm{e}}^{- (t_1 - \tau ) \mathcal{L}} \{ \mathcal{F} (\tau ) - \mathcal{F} (t_1) \}{ \ }d \tau \right\Vert_{\mathcal{H}}\\
& \leq C (t_2 - t_1)^{\eta/2} \int_0^{t_1} \frac{(t_1 - \tau )^\eta}{( t_1 - \tau )^{1+ \eta/2}} { \ }d \tau\\
& \leq C (t_2 - t_1 )^{\eta/2} t_1^{\eta/2} \leq C (t_2 - t_1)^{\eta/2} T^{\eta/2}.
\end{align*}
From
\begin{equation*}
\frac{d}{d \tau } {\rm{e}}^{- \tau \mathcal{L}} \phi = - \mathcal{L} {\rm{e}}^{- \tau \mathcal{L}} \phi { \ \ \ }( \phi \in \mathcal{H}),
\end{equation*}
we find that
\begin{equation*}
P_4 + P_5 = F (t_2) - F (t_1) - {\rm{e}}^{- (t_2 - t_1) \mathcal{L}} \{ F(t_2) - F (t_1) \} - ({\rm{e}}^{- (t_2 - t_1) \mathcal{L}} - 1){\rm{e}}^{- t_1 \mathcal{L}} F (t_1).
\end{equation*}
By \eqref{eq71} and \eqref{eq72}, we see that
\begin{align*}
\Vert P_4 + P_5 \Vert_{\mathcal{H}} & \leq C (t_2 - t_1)^{\eta} + C (t_2 - t_1)^{\eta} \Vert \mathcal{L}^{\eta} {\rm{e}}^{- t_1 \mathcal{L}} \mathcal{F} (t_1) \Vert_{\mathcal{H}}\\
& \leq C (t_2 - t_1)^{\eta/2} + \frac{C (t_2 - t_1)^{\eta}}{ \varepsilon^\eta} \sup_{\varepsilon \leq \tau \leq T_0} \Vert \mathcal{F} ( \tau ) \Vert_{\mathcal{H}}.   
\end{align*}
As a result, we have
\begin{equation*}
\Vert \mathcal{L} V (t_2) - \mathcal{L} V (t_1) \Vert_{\mathcal{H}} \leq C (t_2 - t_1 )^{\eta/2}. 
\end{equation*}
Therefore, we see that for each fixed $\varepsilon,T_0 (0 < \varepsilon < T_0 < T)$
\begin{equation*}
\mathcal{L} V \in C^{\eta/2}([\varepsilon , T_0] ; \mathcal{H}).
\end{equation*}
This shows that
\begin{equation}\label{eq73}
\mathcal{L} V \in C_{loc}^{\eta/2}( (0, T); \mathcal{H}).
\end{equation}
Using an argument similar to deduce \eqref{eq73} with \eqref{eq71}, \eqref{eq72} and\\ $\mathcal{F} \in L^2(0,T;\mathcal{H})$, we find that
\begin{equation*}
V \in C_{loc}^{1/4}( (0, T); \mathcal{H}) \subset C_{loc}^{\eta/2}( (0, T); \mathcal{H}).
\end{equation*}
Since $dV/{dt} = - \mathcal{L} V + \mathcal{F}$ and $\mathcal{L} V , \mathcal{F} \in C_{loc}^{\eta/2} ((0,T); \mathcal{H})$, we check that
\begin{equation*}
dV/{dt} \in C_{loc}^{\eta/2}( (0, T); \mathcal{H}).
\end{equation*}
Therefore, we see \eqref{eq23}.
  \end{proof}

\end{document}